\let\oldtocsection=\tocsection
\let\oldtocsubsection=\tocsubsection
\renewcommand{\tocsection}[2]{\hspace{0em}\oldtocsection{#1}{#2}}
\renewcommand{\tocsubsection}[2]{\hspace{1em}\oldtocsubsection{#1}{#2}}
\newcommand{\Sk}{L_\mathrm{Sk}}
\newcommand{\C}{\mathbb{C}}
\newcommand{\R}{\mathbb{R}}
\newcommand{\Q}{\mathbb{Q}}
\newcommand{\Z}{\mathbb{Z}}
\newcommand{\N}{\mathbb{N}}
\newcommand{\CP}{\mathbb{C}P}
\newcommand{\BlI}{\mathbb{C}P^2\# \overline{\mathbb{C}P^2}}
\newcommand{\BlII}{\mathbb{C}P^2\# 2\overline{\mathbb{C}P^2}}
\newcommand{\BlIII}{\mathbb{C}P^2\# 3\overline{\mathbb{C}P^2}}
\newcommand{\vol}{\mathrm{vol}}
\newcommand{\del}{\partial}
\newcommand{\imp}{\Rightarrow}
\newcommand{\be}{\begin{equation}}
\newcommand{\ee}{\end{equation}}
\newcommand{\bea}{\begin{eqnarray}}
\newcommand{\eea}{\end{eqnarray}}
\newcommand{\beastar}{\begin{eqnarray*}}
\newcommand{\eeastar}{\end{eqnarray*}}
\newtheorem{thm}{Theorem}[section]
\newtheorem{lem}[thm]{Lemma}
\newtheorem{prop}[thm]{Proposition}
\newtheorem{conj}[thm]{Conjecture}
\newtheorem{prb}[thm]{Problem}
\theoremstyle{definition}
\newtheorem{defn}[thm]{Definition}
\newtheorem{rmk}[thm]{Remark}
\newtheorem{ques}[thm]{\rm\bfseries{Question}}
\newtheorem*{thm*}{Theorem}
\numberwithin{equation}{section}
\def\R{{\mathbb R}}
\def\E{{\mathbb E}}
\def\Z{{\mathbb Z}}
\def\C{{\mathbb C}}
\def\R{{\mathbb R}}
\def\P{{\mathbb P}}
\def\N{{\mathbb N}}
\def\11{{\mathbb I}}
\def\C{\mathbb{C}}
\def\Z{\mathbb{Z}}
\def\Q{\mathbb{Q}}
\def\E{\ifmmode{\mathbb E}\else{$\mathbb E$}\fi} %natural numbers
\def\N{\ifmmode{\mathbb N}\else{$\mathbb N$}\fi} %natural numbers
\def\R{\ifmmode{\mathbb R}\else{$\mathbb R$}\fi} %real numbers
\def\Q{\ifmmode{\mathbb Q}\else{$\mathbb Q$}\fi} %rational numbers
\def\C{\ifmmode{\mathbb C}\else{$\mathbb C$}\fi} %complex numbers
\def\Z{\ifmmode{\mathbb Z}\else{$\mathbb Z$}\fi} %integers
\def\P{\ifmmode{\mathbb P}\else{$\mathbb P$}\fi} %real numbers
\def\CS{\ifmmode{\mathbb S}\else{$\mathbb S$}\fi} %real numbers
\def\DD{\ifmmode{\mathbb D}\else{$\mathbb D$}\fi} %real numbers
\def\R{{\mathbb R}}
\def\E{{\mathbb E}}
\def\Z{{\mathbb Z}}
\def\C{{\mathbb C}}
\def\R{{\mathbb R}}
\def\N{{\mathbb N}}
\def\CS{{\mathcal S}}
\def\CU{{\mathcal U}}
\def\CV{{\mathcal V}}
\def\darr#1{\raise1.5ex\hbox{$\leftrightarrow$}
\mkern-16.5mu #1}
\def\roughly#1{\raise.3ex\hbox{$#1$\kern-.75em
\lower1ex\hbox{$\sim$}}}
\def\opname#1{\mathop{\kern0pt{\rm #1}}\nolimits}
\def\dim{\opname{dim}}
\def\vol{\opname{vol}}
\def\Int{\operatorname{Int}}
\theoremstyle:=definition,remark,plain,TheoremNum\do{%
\expandafter\g@addto@macro\csname th@\theoremstyle\endcsname{%
\addtolength\thm@preskip\parskip
}%
}
\begin{document}

\quad \vskip1.375truein

\thispagestyle{empty}
\title[Exotic Lagrangian tori $T_{a,b,c}$]
{Asymptotic behavior of exotic Lagrangian tori $T_{a,b,c}$ in $\CP ^2$ as $a+b+c \to \infty$}

\author{Weonmo Lee, Yong-Geun Oh, Renato Vianna}

\thanks{WL and YO were supported by the IBS project IBS-R003-D1.
RV was supported by the Brazil's National Council of scientific and technological
development CNPq, via the `bolsa de produtividade' fellowship, and by the Serrapilheira fellowship}

\address{Weonmo Lee\\
Department of Mathematics, POSTECH, Pohang, Korea \&
Center for Geometry and Physics, Institute for Basic Sciences (IBS), Pohang, Korea} \email{wm.lee@postech.ac.kr}
\address{Yong-Geun Oh\\
Center for Geometry and Physics, Institute for Basic Sciences (IBS), Pohang, Korea \& Department of Mathematics,
POSTECH, Pohang, Korea} \email{yongoh1@postech.ac.kr}
\address{Renato Vianna\\
Institute of Mathematics, Federal University of Rio de Janeiro (UFRJ), Rio de Janeiro, Brazil}
\email{renato@im.ufrj.br}

\date{}

\begin{abstract} In this paper, we study various asymptotic behavior of the infinite family of
monotone Lagrangian tori $T_{a,b,c}$ in $\CP^2$ associated to Markov triples $(a,b,c)$ described
in \cite{Vi14}.
We first prove that the Gromov capacity of the complement
$\CP^2 \setminus T_{a,b,c}$ is greater than or equal to $\frac13$ of the area of the
complex line for all Markov triple $(a,b,c)$. We then prove that there is a representative of
the family $\{T_{a,b,c}\}$ whose loci completely miss a metric ball of nonzero size and
in particular the loci of the union of the family is not dense in $\CP^2$.
\end{abstract}

\keywords{Vianna tori, Markov triple, orbifold projective plane, almost toric fibration,
relative Gromov capacity, Lagrangian seeds}

\subjclass[2010]{Primary 53D05, 53D35}

\maketitle

\tableofcontents

\begin{center}
\section{Introduction}
\end{center}

In \cite{Vi13,Vi14}, the third named author constructed an interesting family of infinitely many
monotone Lagrangian tori in $\CP^2$ by constructing a monotone Lagrangian torus
associated to each of the Markov triples $(a,b,c)$,
i.e., positive integers satisfying the equation
\be
a^2+b^2+c^2=3abc.
\ee
For his construction, it was used constructions almost toric fibration, Symington's nodal surgery operations
\cite{Sy03} or the operation of
\emph{rational blow down} of the weighted projective planes $\CP(a^2,b^2,c^2)$
to $\CP^2$. Denote by $T_{a,b,c}$ any realization of the torus associated to the triple $(a,b,c)$
in its Hamiltonian isotopy class in $\CP^2$.
To show that this family of $T_{a,b,c}$ are not pairwise Hamiltonian isotopic to one another,
he used the disc-counting invariants which are known to be well-defined for \emph{monotone} Lagrangian tori
\cite{ElPo93,Oh95}.

The starting point of our research in the present article lies in our attempt to
understand the tori $T_{a,b,c}$ in terms of the geometry of Fubini-Study metric on $\CP^2$.
(We refer to Section \ref{sec:discussion} for more discussion on the related
questions.) As a first step towards this goal, we ask the following question

\medskip

\begin{ques}\label{ques:ergodic} Let $\{T_{a,b,c}\}$ be a fixed family of tori in $\CP^2$.
What is the geometric behavior of these tori as $a, \, b, \, c \to \infty$?
For example, will the tori densely spread out $\CP^2$ as $a+b+c \to \infty$?
\end{ques}

We denote by $\frak M$ the set of Markov triples. We remark that a specific construction of
the $\{T_{a,b,c}\}$ tori depends on various unspecified parameters.
Because of the way how they are constructed, it is not easy to visualize the tori
in the Fubini-Study metric of $\CP^2$ although they are well-defined up to Hamiltonian isotopy
on $(\CP^2,\omega_{\text{\rm FS}})$.

Fix any smooth metric on $\CP^2$, e.g., take the Fubini-Study metric of $\CP^2$.

\begin{ques}\label{ques:main} Let $\{T_{a,b,c}\}$ be any realization of the family of monotone Lagrangian tori in
$(\CP^2,\omega_{\text{\rm FS}})$. Consider the following asymptotic quantity
\be\label{eq:delta}
\delta:=\inf_{(a,b,c) \in \frak M} \sup_{x \in \CP^2\setminus T_{a,b,c}}
d(x,T_{a,b,c})
\ee
where $d(x,T_{a,b,c})$ is the distance from $x$ to $T_{a,b,c}$. Is $\delta > 0$? If so,
estimate this $\delta$.
\end{ques}

More intuitively and equivalently, the question asks if for any given point $x \in \CP^2$
and a positive constant $\delta > 0$, there exists a Markov triple $(a,b,c)$ such that
$B_\delta(x) \cap T_{a,b,c} \neq \emptyset$ for the given family $\{T_{a,b,c}\}$.

This number $\delta$ is not a priori a symplectic invariant. More precisely if $T'_{a,b,c}$ is another
realization of these tori, this quantity may vary. Because of this, we
consider the following quantity
\be\label{eq:asymptotics}
\inf_{(a,b,c) \in \frak M} c_G(\CP^2; T_{a,b,c})
\ee
where $c_G(\CP^2; T_{a,b,c})$ is the \emph{relative Gromov area}
\be\label{eq:e(abc)}
c_G(\CP^2; T_{a,b,c}): = \sup_e \{ \pi r^2 \mid e: B^4(r) \to \CP^2 \setminus T_{a,b,c} \text{
is a symplectic embedding}\}
\ee
Relative Gromov area is a symplectic invariant and has been systematically studied by Biran
and Biran-Cornea \cite{Bir01,Bir06,BC09A} for general pair of symplectic manifold $(M,\omega)$ and its Lagrangian submanifold $L$.

  \begin{rmk} We warn the readers that this definition of \emph{relative Gromov area}
  is not the one used by Biran and Cornea in \cite{BC09A}. For our purpose in the present paper, we do not need their finer version and so we will just use the same term instead of introducing
  another different term for the Gromov area of the complement.
  \end{rmk}

The first theorem we prove in the present paper is the following rather optimal lower bound.
(See the construction given in Section \ref{sec:balls}.)

\medskip

\begin{thm}\label{thm:lowerbound} Let $T_{a,b,c}$ be any of monotone Lagrangian tori of
  \cite{Vi14}. Then
$$
\inf_{(a,b,c) \in \mathfrak M} c_G(\CP^2; T_{a,b,c}) \geq \frac{2\pi}{3}.
$$
Here we normalize the Fubini-Study form $\omega_{\text{\rm FS}}$ so that
the area of the complex line is $2\pi$.
\end{thm}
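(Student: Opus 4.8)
The plan is to use the description of $T_{a,b,c}$ from \cite{Vi14}: it is (the image of) the monotone toric fibre of the weighted projective plane $\CP(a^{2},b^{2},c^{2})$ under the symplectic rational blow-down of $\CP(a^{2},b^{2},c^{2})$ to $\CP^2$, equivalently the central monotone fibre of the associated almost toric fibration of $\CP^2$. Write $\mu\colon\CP(a^{2},b^{2},c^{2})\to\Delta$ for the moment map onto the triangular moment polytope, $\partial_{1},\partial_{2},\partial_{3}$ for its edges, $v_{1},v_{2},v_{3}$ for its (orbifold) vertices, and $p_{0}\in\operatorname{int}\Delta$ for the monotone point, so $T_{a,b,c}=\mu^{-1}(p_{0})$. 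The rational blow-down is local: it replaces $\mu^{-1}(N_{i})$ by a symplectic rational homology ball $B_{i}$, for arbitrarily small neighbourhoods $N_{i}$ of the $v_{i}$, and under the resulting identification of the untouched parts one has $\CP^2\setminus T_{a,b,c}\supseteq\mu^{-1}(\Delta\setminus(N_{1}\cup N_{2}\cup N_{3}))\setminus\mu^{-1}(p_{0})$. So it is enough to embed, for every Markov triple, a ball of capacity $\tfrac{2\pi}{3}$ into this toric region away from $p_{0}$ and the $v_{i}$.

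Two numerical inputs, both consequences of the monotonicity of $T_{a,b,c}$ in $(\CP^2,\omega_{\mathrm{FS}})$ normalized by $\int_{H}\omega_{\mathrm{FS}}=2\pi$, fix the geometry. First, the minimal Maslov-$2$ disc area of a monotone Lagrangian torus in $\CP^2$ equals $\tfrac13$ of the line area, i.e.\ $\tfrac{2\pi}{3}$; the basic toric discs of $\mu^{-1}(p_{0})$ have Maslov index $2$ and symplectic area equal to the integral-affine distance from $p_{0}$ to the corresponding edge, so $\operatorname{dist}(p_{0},\partial_{i})=\tfrac{2\pi}{3}$ for every $i$. Second, $(-K)^{2}=9$ is unchanged under the degeneration, and the affine length $L_{i}$ of $\partial_{i}$ (= area of the divisor $D_{i}$) equals $\tfrac{2\pi}{3}(-K\cdot D_{i})$, so $L_{1}+L_{2}+L_{3}=\tfrac{2\pi}{3}\cdot9=6\pi$; in particular some edge, say $\partial_{3}$, has $L_{3}\ge 2\pi$.

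Now take a thin slab $U=\{\operatorname{dist}(\,\cdot\,,\partial_{3})<\tau\}\cap\{s\in I\}$ with $I\subset\partial_{3}$ an arc of affine length $\sigma$ disjoint from the vertices and from the projection of $p_{0}$; away from the vertices the polytope is smooth (a Markov triple is pairwise coprime), so $\mu^{-1}(U)$ is symplectomorphic to $B^{2}(\text{area }\tau)\times(\text{planar surface of area }\sigma)$, whose Gromov width is $\min(\tau,\sigma)$. Since $\operatorname{dist}(p_{0},\partial_{3})=\tfrac{2\pi}{3}$ one may push $\tau$ up to just below $\tfrac{2\pi}{3}$ keeping $p_{0}\notin U$; and since $L_{3}\ge 2\pi$ one may push $\sigma$ up to just below $\tfrac{2\pi}{3}$ \emph{provided the slab actually fits inside $\Delta$}. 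Granting that, letting $\min(\tau,\sigma)\uparrow\tfrac{2\pi}{3}$ and taking the supremum in \eqref{eq:e(abc)} yields $c_{G}(\CP^2;T_{a,b,c})\ge\tfrac{2\pi}{3}$, and the infimum over $\mathfrak M$ is the theorem.

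The step I expect to require real work — not the bookkeeping above — is exactly this proviso, made uniform over all Markov triples. It is automatic while $\Delta$ stays non-degenerate, but for triples deep in the Markov tree $L_{3}\to 6\pi$, the affine height of $\Delta$ over $\partial_{3}$ tends to $\tfrac{2\pi}{3}$, $\Delta$ flattens, $p_{0}$ is driven up against the highest-order orbifold vertex, and then no toric rectangle near any edge attains size $\tfrac{2\pi}{3}$ in both directions. Covering that degenerate regime forces one beyond naive toric balls: either by working in a judiciously chosen almost toric base diagram of $\CP^2$ with central fibre $T_{a,b,c}$ — nodal slides and cut transfers change the shape of the diagram without changing the symplectic manifold, and one shows some such diagram always has a boundary region roomy enough for a capacity-$\tfrac{2\pi}{3}$ ball — or by exploiting the rational homology ball glued in at the offending vertex: $\mu^{-1}$ of the corner region at that vertex, truncated at affine depth $\tfrac{2\pi}{3}$ along both adjacent edges, misses $p_{0}$ (its two affine coordinates there are $<\tfrac{2\pi}{3}$, whereas at $p_{0}$ they equal $\tfrac{2\pi}{3}$) and, after the blow-down, becomes that rational homology ball plus a collar, which one must show still contains a ball of capacity $\tfrac{2\pi}{3}$. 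Establishing one of these with control uniform in $\mathfrak M$ is the crux; everything else is standard toric symplectic geometry together with the two monotonicity facts above, and it is this construction that Section~\ref{sec:balls} must carry out.
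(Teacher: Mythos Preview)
Your numerical setup is exactly right: the monotone fibre sits at affine distance $\tfrac{2\pi}{3}$ from every edge, the affine perimeter is $6\pi$, and the longest edge has length at least $2\pi$. You also correctly identify that your slab $B^{2}(\tau)\times\Sigma(\sigma)$ does \emph{not} survive the degenerate regime: when the base triangle flattens, no affine rectangle of side close to $\tfrac{2\pi}{3}$ fits near any edge while avoiding $p_{0}$. So the gap you flag is real for the approach you chose.

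The paper's resolution is simpler than either of the fixes you propose. Rather than a rectangle, it uses a \emph{triangle} with one edge on the boundary of the ATF base diagram and apex at the monotone point --- the ``monotone triangle'' of affine side length $1$ (in the paper's normalization where the Maslov-$2$ disc area is $1$). By the almost toric blowup picture (the Karshon-type triangle embedding, Section~\ref{subsec:almost-toric-blowup}), such a triangle carries a symplectic ball of capacity arbitrarily close to $1$, i.e.\ to $\tfrac{2\pi}{3}$. The point is that this triangle \emph{tapers}: it has width $1$ along the longest edge (which has affine length $\ge 3$, so there is room) and shrinks to a point at height $1$, precisely where the base diagram may be pinched in the degenerate limit. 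Concretely, after an $SL(2,\Z)$ change putting the longest edge horizontal with the cut from one of its vertices vertical, the monotone fibre sits at height $1$ directly above that vertex, and the right triangle with vertices at that vertex, the monotone point, and one unit along the edge lies inside the diagram for every Markov triple. Shrinking it infinitesimally puts the ball in the complement of $T_{a,b,c}$.

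So no nodal slides, no cut transfers, and no analysis of the glued rational homology ball are needed; the whole ``crux'' dissolves once you replace the slab by a triangle. Your proposal contains the correct skeleton but is missing exactly this one geometric idea.
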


Motivated by the nature of our construction given in Section \ref{sec:balls},
we conjecture that the equality holds in the above theorem.

On the other hand, this lower bound is certainly not optimal for an individual torus.
For example for the case of Clifford torus corresponding to $(a,b,c) = (1,1,1)$,
it is easy to see $c_G(\CP^2; T_{1,1,1}) \geq \frac{4\pi}{3}$ and Biran-Cornea \cite{BC09A} proved
$c_G(\CP^2; T_{1,1,1}) \leq \frac{4\pi}{3}$, and hence
\be\label{eq:T111}
c_G(\CP^2; T_{1,1,1}) = \frac{4\pi}{3}.
\ee
This leads us to a very interesting open problem
\begin{prb}\label{prb:Tabc} Find the precise
estimate of $c_G(\CP^2; T_{a,b,c})$ as done for the Clifford torus.
\end{prb}

The above theorem still does not prevent the loci of the union of the family $\{T_{a,b,c}\}$
being dense in $\CP^2$, it may happen that
there exists a family of symplectic balls of nonzero size associated to Markov triples
$(a,b,c)$ which are \emph{stretched thin and wildly spread} around the ambient space $\CP^2$
without touching the corresponding $T_{a,b,c}$ torus respectively.

\begin{thm} \label{thm:not-dense} There exists a family $\{T_{a,b,c}\}$ of tori
that misses some closed metric ball of non-zero size in $\CP^2$. In fact, the supremum of
the Gromov areas of such metric balls is $\frac{2\pi}3$. In particular, the loci of
the family is not dense in $\CP^2$.
\end{thm}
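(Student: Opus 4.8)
\textbf{Proof proposal for Theorem \ref{thm:not-dense}.}
The plan is to exploit the very same almost toric fibration (ATF) picture that underlies the construction of the $T_{a,b,c}$, together with the lower bound of Theorem \ref{thm:lowerbound}, run in two directions. For the existence of a missed ball: recall that each $T_{a,b,c}$ is realized as a fiber over an interior point of a moment-type polytope $\Delta_{a,b,c}$ obtained from the standard triangle of $\CP^2$ by a sequence of nodal trades and mutations (Symington's operations), and that the node positions, eigen-directions of the monodromy, and the distinguished monotone fiber are all governed by the arithmetic of the Markov triple. The key observation is that one may choose, uniformly in $(a,b,c)$, a \emph{single} open subset of $\CP^2$ — concretely a neighborhood of one of the three fixed points of the residual torus action, or of a small $J$-holomorphic disk attached to a corner — that is disjoint from the torus $T_{a,b,c}$ for \emph{every} Markov triple in this chosen realization. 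I would first isolate, in the base polytope, a fixed corner region whose preimage never meets the monotone fiber because that fiber always sits at a definite lattice-combinatorial location (the barycenter-type point of the mutated polytope), bounded away from that corner by an amount independent of $(a,b,c)$; this uses the explicit combinatorics of the mutation sequence in \cite{Vi14}. Then the preimage of this corner region contains a symplectically embedded ball, and by the normalization the largest such ball one can fit while staying inside the complement of all the $T_{a,b,c}$ has Gromov area exactly $\tfrac{2\pi}{3}$: the upper bound $\tfrac{2\pi}{3}$ on these metric balls is forced by Theorem \ref{thm:lowerbound} running the other way — no, more precisely, the sharp value $\tfrac{2\pi}{3}$ for the \emph{metric} balls will come from matching the ball constructed in Section \ref{sec:balls} against a volume/area obstruction, so I would quote that construction verbatim and then argue it already lives in the complement of the whole family simultaneously for the good realization.

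More carefully, here are the steps in order. \textbf{Step 1.} Fix the realization: for each $(a,b,c)$ take the ATF base $\Delta_{a,b,c}$ and place $T_{a,b,c}$ as its central monotone fiber, all normalized so that $[\CP^1]$ has area $2\pi$; record that the three "ends'' of $\Delta_{a,b,c}$ always contain a standard lattice corner of fixed size (this is where the mutations act trivially), and in particular one corner $v_0$ with a neighborhood $U$ in the base, of combinatorial size bounded below uniformly, over which the fibration is the standard toric model near a $\CP^2$-vertex. \textbf{Step 2.} Show $\pi^{-1}(U)$ is disjoint from $T_{a,b,c}$: the monotone fiber lies over a point at lattice-distance bounded below from $v_0$, because the affine length of $\Delta_{a,b,c}$ grows (being controlled by the areas, which are universal) while the monotone point stays "balanced,'' so it is never pushed into $U$. \textbf{Step 3.} Inside $\pi^{-1}(U)$, which is a standard symplectic neighborhood of a point, one embeds the ball from the Section \ref{sec:balls} construction; since that section already produces a ball of Gromov area $\tfrac{2\pi}{3}$ landing in $\CP^2 \setminus T_{a,b,c}$, and the construction only uses the local corner model, the \emph{same} ball works for all $(a,b,c)$ at once. \textbf{Step 4.} For the upper bound, that the supremum of the Gromov areas of metric balls missing the whole family equals $\tfrac{2\pi}{3}$: here I would argue that a metric ball of Gromov area $>\tfrac{2\pi}{3}$ must, for some Markov triple, intersect $T_{a,b,c}$. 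The cleanest route is to show that as $(a,b,c) \to \infty$ the tori become "fine'' enough that any open set whose relative Gromov area exceeds $\tfrac{2\pi}{3}$ is forced to touch one of them — essentially the contrapositive of density failing only up to scale $\tfrac{2\pi}{3}$. This should follow by combining Theorem \ref{thm:lowerbound} (each complement admits a $\tfrac{2\pi}{3}$-ball, showing $\tfrac{2\pi}{3}$ is \emph{attained}) with a packing/exhaustion argument: if a metric ball of larger Gromov capacity missed every $T_{a,b,c}$, then together with a Markov-triple ball realizing the bound one could violate the Gromov two-ball packing obstruction in $\CP^2$ (total capacity $\le 2\pi$), or more directly violate the known fact that $\CP^2$ cannot contain two disjoint balls of Gromov area exceeding $\tfrac{2\pi}{3}$ together with... — this is the part I will need to set up carefully. \textbf{Step 5.} Conclude non-density: a point in the interior of the missed metric ball has a fixed-radius metric neighborhood disjoint from $\bigcup_{(a,b,c)} T_{a,b,c}$ in this realization.

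The main obstacle, and the place where the argument is genuinely delicate rather than bookkeeping, is \textbf{Step 2} made \emph{uniform} in $(a,b,c)$: one must verify that the monotone fiber — whose location in the mutated polytope is determined by a rather intricate sequence of mutations indexed by the Markov tree — never wanders into the fixed corner neighborhood $U$, \emph{no matter how far out in the Markov tree} one goes. Naively the polytopes become very "spread out'' and sheared, so one has to track the monotone point's position under the mutation maps and show it stays at affine distance $\ge$ some $\varepsilon_0 > 0$ from every vertex; equivalently, that the corner chart of fixed symplectic size survives all mutations. This requires a careful analysis of how Symington's mutations act near a vertex and a monotonicity estimate showing the central fiber is always "interior by a definite margin.'' The sharp value $\tfrac{2\pi}{3}$ in \textbf{Step 4} is the second tricky point: the lower bound half is immediate from Theorem \ref{thm:lowerbound}, but the matching upper bound for metric balls (as opposed to symplectic balls) needs the observation that the extremal $\tfrac{2\pi}{3}$-ball of Section \ref{sec:balls} is, for the worst triples, essentially flush against the torus, so no strictly larger metric ball can be slipped in uniformly — I expect to prove this by a limiting argument along a sequence of Markov triples going to infinity, showing the complements "shrink'' toward the $\tfrac{2\pi}{3}$ threshold in the relative Gromov-capacity sense.
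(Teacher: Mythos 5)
There is a genuine gap, and it sits exactly where your proposal leans hardest. Your Steps 1--3 presuppose that one fixed corner neighbourhood $U$ of the base has a preimage that is the \emph{same} subset of $\CP^2$ for every Markov triple, i.e.\ that the almost toric fibrations realizing the different $T_{a,b,c}$ can all be chosen to agree over $U$. That coherence is precisely the content of the theorem and does not come for free: each $T_{a,b,c}$ is a fibre of a \emph{different} ATF, obtained from the previous one by nodal slides and transferring cuts, and its realization in $(\CP^2,\omega_{\text{\rm FS}})$ passes through a Moser isotopy, so ``the corner of the base diagram'' is not a fixed region of $\CP^2$ as $(a,b,c)$ varies. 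Tracking the affine distance from the monotone point to a vertex inside each polytope (your Step 2) only reproduces the per-triple statement already contained in Theorem \ref{thm:lowerbound}; it does not produce a single metric ball missed by all representatives simultaneously. The paper's proof supplies the missing mechanism via the Lagrangian seed $\Sk = T_{\text{\rm Cl}} \cup D_1 \cup D_2 \cup D_3$ and the mutation lemma of \cite{PaTo17}: first slide all three nodes into a small disk $D$ around the monotone fibre; then \emph{every} mutation in the Markov tree can be performed by moving nodes inside $D$, leaving the fibration and the symplectic form untouched outside $\pi^{-1}(D)$ (Figure \ref{fig:All_ATFs}). Hence representatives of all $T_{a,b,c}$ live in an arbitrarily small neighbourhood of $\Sk$, and any metric ball in the fixed complement is missed by the whole family at once. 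Without a confinement statement of this kind, your claim in Step 3 that ``the same ball works for all $(a,b,c)$ at once'' is unsupported.

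Step 4 also fails as written. The obstruction you invoke --- that $\CP^2$ cannot contain two disjoint symplectic balls each of capacity exceeding $\frac{2\pi}{3}$ --- is false: two disjoint balls of capacity up to $\pi$ each embed in $\CP^2$ (the two-ball packing fills half the volume, and Gromov's two-ball inequality only bounds the sum of capacities by $2\pi$), so no contradiction arises from a missed ball of capacity larger than $\frac{2\pi}{3}$ coexisting with the ball of Theorem \ref{thm:lowerbound}; you yourself flag this step as unfinished. In the paper the value $\frac{2\pi}{3}$ is tied to the specific constructed family: because the tori are confined near $\Sk$, whose projection is the barycentre together with the three segments to the vertices, one finds balls of every capacity strictly below the monotone capacity $\frac{2\pi}{3}$ in the complement (indeed nine of them, which is maximal for volume reasons), while the monotone capacity itself cannot be reached in the complement of $\Sk$; it is not deduced from a packing bound valid for an arbitrary realization, which is what your contrapositive would require.
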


The proof of this theorem will be given in Section \ref{sec:dense} using the
geometric mutation theorem of the \emph{Lagrangian seeds} studied in
in \cite{STW16} and \cite{To17,PaTo17}. But one only needs to look at Figure
\ref{fig:All_ATFs} to see how it goes.

In the rest of the paper, we will provide various estimates relevant to
the ball packing problem in $\CP^2 \setminus T_{a,b,c}$ or $(\CP^2 \setminus E) \setminus T_{a,b,c}$
where $E \subset \CP^2$ is a smooth cubic curve, which corresponds to a
Donaldson divisor of $\CP^2$. One of the outcomes is the following result

\begin{thm}\label{thm:Chekanov-Schlenk} Any $T_{a,b,c}$ tori, in particular the Chekanov torus,
can be embedded into the monotone $\CP^2 \# \overline{k\CP^2}$ for $k \leq 5$.
\end{thm}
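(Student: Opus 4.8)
\emph{Proof sketch.} The plan is to obtain the embedding by a monotone symplectic blow-up of $(\CP^2,\omega_{\text{\rm FS}})$ performed away from $T_{a,b,c}$. Fix a Markov triple $(a,b,c)$. The first step is to produce five pairwise disjoint closed symplectically embedded four-balls $B_1,\dots,B_5\subset\CP^2\setminus T_{a,b,c}$, each of Gromov area exactly $\frac{2\pi}{3}$; this already yields all of $\Blk$ with $k\le 5$, since for smaller $k$ one simply discards balls. Granting this, blow up $(\CP^2,\omega_{\text{\rm FS}})$ at $B_1,\dots,B_k$: the result is a symplectic manifold diffeomorphic to $\Blk$ in which each exceptional divisor $E_i$ has area $\frac{2\pi}{3}$ while the line class $H$ keeps area $2\pi$, so $[\omega_k]=2\pi H-\frac{2\pi}{3}\sum_i E_i=\frac{2\pi}{3}\bigl(3H-\sum_i E_i\bigr)=\frac{2\pi}{3}\,c_1(\Blk)$. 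Hence $\omega_k$ is monotone, and since the monotone symplectic structure on $\Blk$ is unique up to symplectomorphism for $k\le 8$, $(\Blk,\omega_k)$ is the monotone del Pezzo. As the surgery is supported in $\CP^2\setminus T_{a,b,c}$, the torus is untouched and remains Lagrangian, and because the ambient monotonicity constant $\frac{2\pi}{3}$ is unchanged it stays a monotone Lagrangian torus in $(\Blk,\omega_k)$. Applying this to $(a,b,c)=(1,1,2)$ gives the assertion for the Chekanov torus.

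It remains to produce the balls, and this is where the real content lies. For a single ball this is, up to replacing an open ball by a slightly smaller closed one, Theorem \ref{thm:lowerbound} together with the construction of Section \ref{sec:balls}. To get five at once I would invoke the relative ball-packing estimates for $\CP^2\setminus T_{a,b,c}$ — and, more efficiently, for $(\CP^2\setminus E)\setminus T_{a,b,c}$ with $E$ a smooth cubic Donaldson divisor — established in the preceding sections. Concretely, one puts $T_{a,b,c}$ in a normal form, visible in the almost toric fibration picture of Figure \ref{fig:All_ATFs}, in which it is displaced from a Biran-type standard symplectic disc-bundle neighbourhood of $E$ \cite{Bir01}; the packing problem then localizes to a region whose symplectic geometry is controlled uniformly in $(a,b,c)$, and one reads off the required disjoint balls there — equivalently, one enlarges the almost toric fibration of $\CP^2$ to one of $\Blk$ by operations supported away from the fibre $T_{a,b,c}$. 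Performing the blow-up off $E$ has the additional benefit that $E$ persists as an anticanonical curve in $\Blk$, which is convenient for the relative constructions.

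The main obstacle, and the source of the bound $k\le 5$, is precisely this uniform multi-ball packing: one must fit the balls of the \emph{exact} monotone size $\frac{2\pi}{3}$, pairwise disjoint and all avoiding $T_{a,b,c}$, simultaneously for every Markov triple rather than triple by triple. The ambient $\CP^2$ itself contains as many as eight disjoint closed balls of this size — indeed the degree-one monotone del Pezzo $\BlVIII$ arises by blowing up eight of them — but the torus $T_{a,b,c}$ blocks room, and the lower bounds available for the relative Gromov area of $\CP^2\setminus T_{a,b,c}$ (respectively $(\CP^2\setminus E)\setminus T_{a,b,c}$) suffice to extract at most five; in view of the conjectured equality in Theorem \ref{thm:lowerbound} it is plausible that little more can be expected in general. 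Everything else — the blow-up bookkeeping, the monotonicity computation, and the persistence of $T_{a,b,c}$ — is then routine.
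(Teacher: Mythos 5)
The back end of your argument (blow up five disjoint balls of capacity $\frac{2\pi}{3}$ away from the torus, check $[\omega_k]=\frac{2\pi}{3}(3H-\sum_i E_i)$, invoke uniqueness of the monotone structure on $\Blk$, keep $T_{a,b,c}$ untouched) is fine and is exactly what the theorem reduces to. The genuine gap is that you never actually construct the five disjoint monotone balls, and the device you propose to "invoke" cannot produce them. Theorem \ref{thm:lowerbound} and the construction of Section \ref{sec:balls} give a \emph{single} ball of capacity $\ge \frac{2\pi}{3}$ in $\CP^2\setminus T_{a,b,c}$; a lower bound on the relative Gromov area says nothing about packing five such balls simultaneously, and no multi-ball packing estimate uniform in $(a,b,c)$ is available before this point of the paper --- establishing it \emph{is} the content of the proof. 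Your closing claim that "the lower bounds available for the relative Gromov area \dots suffice to extract at most five" has the logic backwards: the number five is not read off from any capacity bound, and your appeal to a Biran-type disc-bundle neighbourhood of $E$ and to "a region whose symplectic geometry is controlled uniformly in $(a,b,c)$" is a placeholder, not an argument.

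What the paper actually does is the following, and none of it appears in your sketch. First, the ATF triangle-to-ball mechanism of Subsection \ref{subsec:almost-toric-blowup}: a monotone affine triangle with one side on the boundary of the base diagram, together with a slightly larger neighbourhood, carries a monotone ball disjoint from the monotone fibre. Second, the arithmetic input Proposition \ref{prp:2/3}: for a Markov triple with $c>b\ge a$ one has $2ab\le c$, equivalently $c^2\ge\frac{2}{3}(a^2+b^2+c^2)$, proved by induction on mutations using Lemmas \ref{prp:a'} and \ref{prp:KaNo2}. Since the total affine length of the boundary is $9$ times the base of the monotone triangle, the longest edge of the base diagram for $T_{a,b,c}$ (with $c\ge 2$) has length at least $6$ such units, so five disjoint monotone triangles fit along it below the monotone fibre; this, not any capacity estimate, is the source of the bound $k\le 5$. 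Third, the triples $(1,b,c)$ --- including the Clifford and Chekanov tori, for which the longest-edge argument alone does not give five --- require a separate trick: one takes an ATF whose two nodes are pushed very close to the monotone fibre and slides five monotone triangles through the cuts (the monodromy preserves the eigen-direction, so the triangles survive), producing five monotone balls in the complement of all $T_{1,b,c}$ at once. Without these three ingredients --- or a genuine substitute for the uniform five-ball packing --- your proposal does not prove the theorem.
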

This in particular affirmatively answers to a question posed by Chekanov and Schlenk \cite[Section 7]{ChSch10}
which asks whether Chekanov torus can be embedded into $\CP^2 \# \overline{3\CP^2}$.

In Section \ref{sec:discussion}, we make further
discussion and propose several open questions related to the geometry of
the tori $T_{a,b,c}$.

\section{Review of the exotic tori $T_{a,b,c}$}

  The third named author \cite{Vi13, Vi14} constructed a family of
  infinitely many non-Hamiltonian isotopic monotone Lagrangian tori in $\CP^2$
  as the transfers to  $\CP^2$ of the fibers $T(a^2,b^2,c^2)$ at the (labeled) barycenter of the moment polytope of the weighted projective plane $\CP(a^2,b^2,c^2)$ or its relevant almost toric fibration. He utilized
  Symington's symplectic rational blow-down operations \cite{Sy01}
  on each neighborhood of orbifold points thereof and Moser's deformation of the glued symplectic forms on the resulting
  blow-down to the Fubini-Study form on $\CP^2$ for his construction. For the simplicity of notation, we denote
  by
  $$
  T_{a,b,c} \subset \CP^2
  $$
  any realization of the family of these tori in their Hamiltonian isotopy class in $\CP^2$. We exclusively
  reserve $T(a^2,b^2,c^2)$ for the fiber at the barycenter of the moment polytope of $\CP(a^2,b^2,c^2)$.
  The torus $T_{a,b,c}$ can be also realized as the fiber of a base point of an almost toric fibration of $\CP^2$ \cite{Vi14}.
  An almost toric fibration is a singular Lagrangian fibration with nodal singular fibers.
  Here a nodal singular fiber carries an isolated singularity whose image under the almost toric projection
  lies on the interior of the base diagram of the almost toric fibration. This image point is called a \emph{node}.

\subsection{Almost toric fibration and nodal surgeries}

  In this subsection, we recall definitions of almost toric fibration, nodal surgery operation and related results from \cite[Section 2.3]{Vi13}.

  \begin{defn}[\cite{Zu97}, \cite{Vi13}]
  An \emph{almost toric fibration} of a symplectic four manifold $(M,\omega)$ is
  a Lagrangian fibration $\pi: (M, \omega) \rightarrow B$ such that any point of
  $(M, \omega)$ has a Darboux neighborhood (with symplectic form $dx_1\wedge dy_1 + dx_2\wedge dy_2$) in which the map $\pi$ has one of the following forms:
  \begin{align*}
  \pi(x,y) & =  (x_1, x_2), &&\text{regular point}, \\
  \pi(x,y) & =  (x_1, x_2^2 + y_2^2), &&\text{elliptic, co-rank one}, \\
  \pi(x,y) & =  (x_1^2 + x_2^2, x_2^2 + y_2^2), &&\text{elliptic, co-rank two}, \\
  \pi(x,y) & =  (x_1y_1 + x_2y_2, x_1y_2 - x_2y_1), &&\text{nodal or focus-focus},
  \end{align*}
  with respect to some choice of coordinates near the image point in $B$.
  An \emph{almost toric manifold} is a symplectic manifold equipped with an almost toric fibration.
  \end{defn}

A Lagrangian fibration induces an integral affine structure $\Lambda$ on the base $B$ with singularity.
  Such pair $(B,\Lambda)$ is called an \emph{almost toric base} \cite{Sy03}.
  For an almost toric manifold, there is a nontrivial monodromy around the nodal singular fiber.
  This prevents one from embedding the full almost toric base into $(\R^2,\Lambda _0)$
  where $\Lambda_0$ is a standard integral affine structure.
  However removing an embedded curve $R$ joining a point of the boundary, in particular the vertex, of a moment polytope and the node,
  called a \emph{branch curve}, makes this embedding possible.

  \begin{defn}
  Suppose we have an integral affine embedding $\Phi: (B - R, \Lambda) \to (\mathbb{R}^2, \Lambda_0)$,
  where $(B, \Lambda)$ is an almost toric base and $R$ is a set of branch curves. A \emph{base
  diagram} of $(B, \Lambda)$ with respect to $R$ and $\Phi$ is the image of $\Phi$ decorated with
  the following data:

  \begin{itemize}
  \item an \emph{x} marking the location of each node and
  \item  dashed lines indicating the portion of $\del\overline{\Phi(B - R)}$ that
  corresponds to $R$.
  \end{itemize}
  \end{defn}

  If the direction of $R$ is $(k,l)$, then the monodromy around the node can be represented by
  $$
  A_{(k,l)} = \begin{pmatrix} 1 -kl & k^2 \\-l^2 & 1 + kl \end{pmatrix}
  $$
  with respect to some choice of basis. (See \cite{Vi13} for the details.)

  Consider a moment polytope in $(\R^2)^*.$
  Nodal trade introduces a node and a cut inside the moment polytope in place of the vertex.(See the first two triangles in Figure \ref{fig:1})
  The corresponding fibration has a nodal singular fiber and becomes an almost toric fibration.
  Nodal slide literally ``slides'' a node along an eigenline of the monodromy map.(See the second and the third triangles in Figure \ref{fig:1})

  \begin{figure}[h]
  \centering
  \includegraphics[scale=1.0]{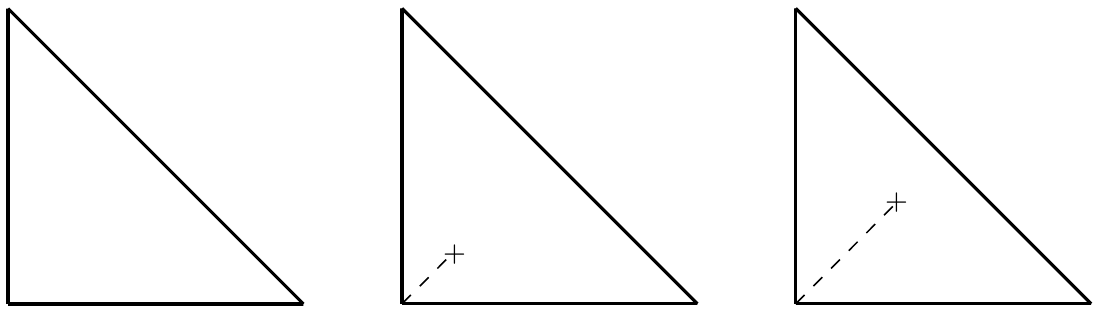}
  \caption{Almost toric surgeries on the moment polytope of $\CP^2$}
  \label{fig:1}
  \end{figure}

Following Symington \cite{Sy03}, we consider the following operations on almost toric
fibrations which do not change the symplectic structure of the total space up to symplectomorphism.
  \begin{defn}
  Let $(B,\Lambda_i)$ be two almost toric bases, $i = 1, 2$. We say that
  $(B,\Lambda_1)$ and $(B,\Lambda_2)$ are related by a \emph{nodal slide}
  if there is a curve $\gamma$ in $B$ such that
  \begin{itemize}
    \item $(B - \gamma,\Lambda_1)$ and $(B - \gamma,\Lambda_2)$
  are isomorphic,
    \item $\gamma$ contains one node of $(B,\Lambda_i)$ for each $i$ and
    \item $\gamma$ is contained in the \emph{eigenline} (line preserved by the monodromy)
     through that node.
     \end{itemize}
  \end{defn}

  \begin{defn}
  Let $(B_i,\Lambda_i)$ be two almost toric bases, $i = 1, 2$. We say that
  $(B_1,\Lambda_1)$ and $(B_2,\Lambda_2)$ differ by a \emph{nodal trade}
  if each contains a curve $\gamma_i$ starting at $\del B_i$ such that
  $(B_1 - \gamma_1,\Lambda_1)$ and $(B_2 - \gamma_2,\Lambda_2)$
  are isomorphic, and $(B_1, \Lambda_1)$ has one less vertex than $(B_2,\Lambda_2)$.
  \end{defn}

  It is shown by Symington \cite{Sy03} that
  these almost toric operations do not change the diffeomorphism type they represent and
  keep the symplectic structure up to isotopy and so Moser's argument
  shows that the two almost toric manifolds before and after the operations are symplectomorphic.

\subsection{$\CP(a^2,b^2,c^2)$ as a symplectic reduction of $\C^3$}

The base diagram for an almost toric fibration of $\CP^2$ having a monotone Lagrangian torus $T_{a,b,c}$ at its barycenter looks like the moment polytope of a weighted projective plane $\CP(a^2,b^2,c^2)$, except that the diagram
is equipped with nodes and cuts.

In this subsection we describe various aspects of geometry of $\CP(a^2,b^2,c^2)$ as a toric
orbifold.
We consider the following $\C^*$ action of $S^1$ on $\C^3 \setminus \{0\}$ defined by
  \be\label{eq:zeta}
   \zeta \cdot (x,y,z) := (\zeta^{a^2}x,\zeta^{b^2}y, \zeta^{c^2}z)
  \ee
  for $\zeta \in \C \setminus 0$.
  The \emph{weighted projective plane} $\CP(a^2,b^2,c^2)$ \emph{with weights} $(a^2,b^2,c^2)$
  as a complex orbifold
  is nothing but the quotient of $(\C^*)^3$ by this action.
  We denote by $[x:y:z]$ its element represented by $(x,y,z) \in \C^3 \setminus \{0\}$.
$\CP(a^2,b^2,c^2)$ is an orbifold with three orbifold points $[1:0:0],[0:1:0],[0:0:1].$
  Their corresponding orbifold structure groups are
  $$
  \Z / a^2\Z, \quad \Z / b^2\Z,\quad \Z / c^2\Z,
  $$
  respectively.

For our purpose of proving Main Theorem, we need to explicitly express
the relevant symplectic structure and $T^2$-action on $\CP(a^2,b^2,c^2)$
starting from the linear sigma model construction as in \cite{Wit93, Aud_book}, which
was exploited in the Lagrangian Floer theory of toric manifolds in \cite[Section 3]{CO06}.
We regard the toric orbifold $\CP(a^2,b^2,c^2)$ as the symplectic reduction $\phi_K^{-1}(r_0)/K$
of $\C^3$ under the action of the circle subgroup $K \subset T^3$ for a suitable choice of
$r_0 \in \R$ where $\phi_K: \C^3 \to \R$ is the associated moment map.
Then it carries the canonical action of the residual torus $T^3 /K \cong T^2$ thereon.
We will call this particular $T^2$-action the \emph{residual $T^2$-action} on
$\phi_K^{-1}(r_0)/K \cong \CP(a^2,b^2,c^2)$.

Now we need to describe this torus action on $\CP(a^2,b^2,c^2)$ and its moment polytope
explicitly, employing notations from \cite[Section 2.2]{Abr01}. For this purpose, we start with the standard torus action of $T^3$ on
$\left(\C^3,\sum_{i=1}^{d=3} du_{i} \wedge dv_{i}\right)$ defined by
  $$
  \theta \cdot (z_{1},z_{2},z_{3})=(e^{i\theta_1}z_{1},e^{i\theta_2}z_{2},e^{i\theta_3}z_{3}).
  $$
This $T^3$ action
  has its moment map $\phi_{T^3}: \C^3 \to \R^3$ given by
  $$
  \phi_{T^3}(z_{1},z_{2},z_{3})=\sum_{i=1}^{3} \frac{|z_i|^2}{2} e_{i}^{*} + \lambda
  $$
  for an arbitrary choice of constant vector $\lambda \in (\R^{3})^{*}$ in general
  where  $\{e_{1}^{*},e_{2}^{*},e_{3}^{*}\}$ is the basis of $(\R^{3})^{*}$ dual to the standard basis $\{e_1,e_2,e_3\}$ of
  $\R^3$.
  Setting $\lambda = \sum_{i=1}^{3} \lambda_{i} e_{i}^{*}$ with
  $$
  \lambda_1=-b^{2}c^{2}, \, \lambda_2=\lambda_3=0
  $$
  we have
  $$
  \phi_{T^3}(z_1,z_2,z_3)=\left(\frac{|z_{1}|^2}{2}-b^{2}c^{2}\right)e_{1}^{*} + \frac{|z_{2}|^2}{2}e_{2}^{*} + \frac{|z_{3}|^2}{2}e_{3}^{*}.
  $$

 Being a subgroup, any circle subgroup $K \subset T^3$ naturally acts on $\C^{3}$ with moment map
  $$
  \phi_{K} = \iota^{*} \circ \phi_{T^3} = \sum_{i=1}^{3}\left(\frac{|z_i|^2}{2}+ \lambda_{i}\right) \iota^{*}(e_{i}^{*}) \in \mathfrak{k}^{*}.
  $$
   Then the symplectic quotient $M_P := \phi_{K}^{-1}(0)/K$ of $\C^3$
  carries the canonical reduced symplectic form and
  the residual torus action by $T^3/K \cong T^2$  whose moment map image
  is the labeled polytope $P$ described in subsection \ref{subsec:moment polytope}. Furthermore
  this reduced space is precisely the symplectic orbifold $\CP(a^2,b^2,c^2)$
  equipped with the 2-torus action by the torus $T^2 \cong T^3/K$.

  We now identify what this circle subgroup $K \subset T^3$ associated to $\CP(a^2,b^2,c^2)$ is.
  Define a linear map $\beta:\R^3 \to \R^2$ by
  $$
  \beta(e_{i})=m_{i}\mu_{i} \text{ for } i=1,2,3
  $$
  and denote by $\mathfrak{k}$ the kernel of $\beta$.
  We have the short exact sequences
  $$
  0 \to \mathfrak{k} \overset{\iota}{\to} \R^3 \overset{\beta}{\to} \R^2 \to 0
  \ \ \ \mbox{and its dual}\ \ \
  0 \to (\R^2)^{*} \overset{\beta^{*}}{\to} (\R^3)^{*} \overset{\iota^{*}}{\to}\mathfrak{k}^{*} \to 0\ .
  $$
  Denote by $K \subset T^3$ the subgroup generated by $\mathfrak{k}$.
  $\beta$ also induces the exact sequence of abelian groups
  $$
  0 \to K \to \R^3/ (2\pi\Z)^3 \to \R^2/(2\pi \Z)^2 \to 0.
  $$
  For any element $\theta=(\theta_1,\theta_2,\theta_3)$ in $\mathfrak k$, since $m_i =1$ for all $i=1,2,3$,
  $$
  \Sigma m_{i}\theta_{i} \mu_{i}=\Sigma \theta_{i} \mu_{i} \in (2\pi \Z)^3.
  $$
  A simple computation shows
  \begin{lem}\label{lem:k} The one-dimensional integral sub-lattice
   $\mathfrak k$ is generated by $(a^2,b^2,c^2)$.
  \end{lem}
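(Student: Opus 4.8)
The plan is to identify $\mathfrak{k} = \ker\beta$ explicitly and then check that the given generator is primitive, so that it generates the full integral sublattice. First I would unwind the definition: the map $\beta:\R^3\to\R^2$ sends $e_i \mapsto m_i\mu_i = \mu_i$ (using $m_i=1$), where the vectors $\mu_1,\mu_2,\mu_3 \in \Z^2$ are the primitive inward normals to the edges of the moment polytope of $\CP(a^2,b^2,c^2)$ prescribed in subsection \ref{subsec:moment polytope}. Concretely, after choosing the standard presentation these are $\mu_1 = (1,0)$, $\mu_2 = (0,1)$, $\mu_3 = (-a^2/\!\!\gcd,\dots)$ — more precisely, the defining relation for the weighted projective plane $\CP(a^2,b^2,c^2)$ is that the edge normals satisfy the single linear dependence $a^2\mu_1 + b^2\mu_2 + c^2\mu_3 = 0$. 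I would either quote this from \cite{Abr01} or derive it directly from the fan of $\CP(a^2,b^2,c^2)$: the one-dimensional cones of that fan are spanned by primitive vectors $v_1,v_2,v_3$ with $a^2 v_1 + b^2 v_2 + c^2 v_3 = 0$, and the $\mu_i$ are (a basis-change of) these $v_i$.

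Granting the relation $a^2\mu_1 + b^2\mu_2 + c^2\mu_3 = 0$, it follows immediately that $\beta(a^2,b^2,c^2) = a^2\mu_1 + b^2\mu_2 + c^2\mu_3 = 0$, so $(a^2,b^2,c^2) \in \mathfrak{k}$. Since $\beta$ is surjective onto $\R^2$ (the $\mu_i$ span, as the polytope is two-dimensional), $\mathfrak{k}$ is one-dimensional, hence $\mathfrak{k} = \R\cdot(a^2,b^2,c^2)$. Thus the integral lattice $\mathfrak{k}\cap\Z^3$ is generated by $\frac{1}{d}(a^2,b^2,c^2)$ where $d = \gcd(a^2,b^2,c^2)$. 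So the only remaining point is to show $d = 1$, i.e., that $(a^2,b^2,c^2)$ is a primitive vector in $\Z^3$. This is where the Markov equation enters: if a prime $p$ divided $\gcd(a,b,c)$ then $p^2$ would divide each of $a^2,b^2,c^2$ but $p^3$ would divide $3abc$, and since $3abc = a^2+b^2+c^2$ this forces $p^3 \mid a^2+b^2+c^2$, hence $p^3$ divides each term after subtracting the others — no, more carefully: $p^2 \| $ is not automatic, so instead argue that $\gcd(a,b,c)=1$ for Markov triples by the standard descent/Vieta argument, or simply note that consecutive entries of any Markov triple are pairwise coprime (a well-known elementary fact), which in particular gives $\gcd(a,b,c)=1$ and therefore $\gcd(a^2,b^2,c^2)=1$.

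I expect the main obstacle to be purely bookkeeping: pinning down the precise $\mu_i$ from subsection \ref{subsec:moment polytope} and verifying the linear relation $\sum a_i^2 \mu_i = 0$ with the correct signs and basis conventions, since the identification of $\CP(a^2,b^2,c^2)$ as a symplectic reduction involves several dual exact sequences and it is easy to lose a transpose or a sign. Once the relation is in hand, the primitivity of $(a^2,b^2,c^2)$ is an elementary number-theoretic remark about Markov triples, and the conclusion that $\mathfrak{k}$ is generated by $(a^2,b^2,c^2)$ is immediate from rank considerations. I would therefore structure the written proof as: (1) recall the $\mu_i$ and the relation, citing \ref{subsec:moment polytope}; (2) deduce $(a^2,b^2,c^2)\in\mathfrak{k}$ and $\dim\mathfrak{k}=1$; (3) invoke coprimality of Markov triples for primitivity; (4) conclude.
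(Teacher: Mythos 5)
Your proposal is correct and follows essentially the same route as the paper: verifying that $(a^2,b^2,c^2)$ lies in $\ker\beta$ with the paper's normals $\mu_1=(-(bl_2-1),-b^2)$, $\mu_2=(-(al_1-1),a^2)$, $\mu_3=(1,0)$ comes down to exactly the identity $ab(al_2+bl_1)-a^2-b^2=c^2$ obtained from \eqref{eq:al2+bl1=3c} and the Markov equation, which is the same computation the paper performs by solving $\sum_i \theta_i\mu_i=0$ for a general kernel element (and your relation $a^2\mu_1+b^2\mu_2+c^2\mu_3=0$ is just the $90^\circ$ rotation of the stated balancing condition $a^2u_1+b^2u_2+c^2u_3=0$). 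The only substantive difference is that you make the primitivity step explicit, namely $\gcd(a^2,b^2,c^2)=1$ via the pairwise coprimality of Markov triples (the fact the paper invokes later in Lemma \ref{lem:labels}), which the paper's proof leaves implicit when passing from the kernel line to the integral sublattice.
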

  \begin{proof}
   Any element $\theta=(\theta_1,\theta_2,\theta_3)$ in $\mathfrak k$ satisfies $$\Sigma \theta_{i} \mu_{i}=(-(bl_{2}-1)\theta_{1}-(al_{1}-1)\theta_{2}+\theta_{3},-b^{2}\theta_{1}+a^{2}\theta_{1}) \equiv (0,0).$$
   From the second slot, $$\theta_2=\frac{b^2}{a^2}\theta_1.$$
   This implies
   \begin{eqnarray*}
   \theta_3 &=& (bl_2-1)\theta_1+(al_1-1)\frac{b^2}{a^2}\theta_1 \\
   &=& \frac{\theta_1}{a^2}(a^2bl_2-a^2+ab^2l_1-b^2) \\
   &=& \frac{c^2}{a^2}\theta_1.
   \end{eqnarray*}

   Here the last equality comes from equalities $$a^2bl_2-a^2+ab^2l_1-b^2=ab(al_2+bl_1)-a^2-b^2$$ and \eqref{eq:al2+bl1=3c} and the fact that $(a,b,c)$ is a Markov triple.
   Therefore $\mathfrak k$ is generated by $(a^2,b^2,c^2)$.
 \end{proof}
  Therefore this lemma is consistent with the $\C^*$ action \eqref{eq:zeta}.

\subsection{Residual $T^2$-action on $\CP(a^2,b^2,c^2)$ and its moment polytope} \label{subsec:moment polytope}

  Using the generalization of Delzant's argument applied to toric orbifold, Lerman and Tolman \cite{LT97}
  describe its associated orbifold moment map $\phi_P$ and the associated symplectic form $\omega_P$ on $\phi_{P}^{-1}(\text{\rm Int}\,P)$ explicitly, which we now recall.
  In order to apply their argument, we borrow relevant definitions and arguments from the
  exposition of Abreu \cite[Section 2.2]{Abr01} now.

  \begin{defn}
  A convex polytope $P$ in $(\R^n)^*$ is said to be \emph{simple} and \emph{rational} if
  \begin{itemize}
  \item $n$ facets meet at each vertex $p$,
  \item those edges meeting at the vertex $p$ are all rational, i.e., each edge has the form $p+tv_i$  where $0 \leq t \leq \infty, v_i \in (\Z^n)^*$and
  \item the $v_1,\cdots,v_n$ can be chosen to be a $\Q-$basis of the lattice $(\Z^n)^*.$
  \end{itemize}
  \end{defn}

  We call the polytope $P$ a \emph{labeled polytope} if it is a rational simple convex polytope and there is a positive integer label on the interior of each facet.

  \begin{thm} \emph{\cite{LT97, Abr01}} \label{LT}
  Let $(M,\omega)$ be a compact symplectic toric orbifold, with moment map $\phi : M \to (\R^n)^*.$
  Then $P=\phi(M)$ is a labeled polytope.
  For each facet $F$ of $P$, there exists a positive integer $m_F$, the \emph{label} of $F$, such that the orbifold structure group of every point in $\phi^{-1}(\text{\rm Int}\,F)$ is $\Z/m_{F}\Z.$

  Two compact symplectic toric orbifolds are equivariantly symplectomorphic if and only if their associated labeled polytopes are isomorphic. Moreover, every labeled polytope arises from some compact symplectic toric orbifold.
  \end{thm}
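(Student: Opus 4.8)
The plan is to run Delzant's proof of the classification of symplectic toric manifolds, modified in the two places where the orbifold structure genuinely intervenes: the local normal form along the singular strata, and the symplectic reduction that produces the global model. There are two assertions to establish --- that $P=\phi(M)$ is a labeled polytope, and that the labeled polytope is a complete equivariant invariant --- and I would treat them in that order, the second splitting into an existence claim (``every labeled polytope arises'') and a uniqueness claim.

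\textbf{The moment image is a labeled polytope.} The orbifold form of the Atiyah--Guillemin--Sternberg convexity theorem (proved by Lerman--Tolman, extending the manifold case) already gives that $P=\phi(M)$ is a convex polytope, the convex hull of the images of the fixed points. The content lies near the lower-dimensional strata. Around a point $p$ with $k$-dimensional orbit, an equivariant Darboux / Marle--Guillemin--Sternberg slice theorem --- valid in the orbifold category because a uniformizing chart at $p$ is a linear symplectic $T^k$-representation modulo the finite structure group $\Gamma_p$ --- models a neighborhood on $\C^{n-k}\times T^*T^k$ quotiented by $\Gamma_p$, with $\Gamma_p$ acting on the $\C$-factors by roots of unity. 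Writing out the moment map in these coordinates shows that the edges of $P$ issuing from the corresponding vertex are rational, spanned by a $\Q$-basis but in general not a $\Z$-basis of the lattice --- precisely the simple-and-rational condition --- and that the orbifold structure group of every point over the interior of a facet $F$ is a cyclic group whose order $m_F$ is well defined, being locally constant on the connected set $\phi^{-1}(\Int F)$. This produces the labels.

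\textbf{Existence and uniqueness.} For existence, write $P=\{x:\langle x,\nu_i\rangle\ge\lambda_i,\ i=1,\dots,d\}$ with $\nu_i\in\Z^n$ primitive and labels $m_i\in\Z_{>0}$, and define $\beta\colon\R^d\to\R^n$ by $\beta(e_i)=m_i\nu_i$; this is the orbifold modification of Delzant's map, and it is exactly the recipe used in Lemma \ref{lem:k} for $\CP(a^2,b^2,c^2)$, where $n=2$, $d=3$, $m_i=1$ and $\beta(e_i)=m_i\mu_i$. Since $P$ is $n$-dimensional, $\beta$ is onto; put $\mathfrak{k}=\ker\beta$, a rational subspace, and let $K\subset T^d$ be the \emph{connected} subtorus it generates. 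Restrict the standard Hamiltonian $T^d$-action on $\left(\C^d,\sum_i du_i\wedge dv_i\right)$, with moment map $\sum_i\frac{|z_i|^2}{2}e_i^*+\lambda$, to $K$ to obtain $\phi_K=\iota^*\circ\phi_{T^d}$, and form the symplectic reduction $M_P:=\phi_K^{-1}(0)/K$. One then checks that $0$ is a regular value up to finite stabilizers, that the residual torus $T^d/K$ is an $n$-torus acting effectively, that its moment image is $P$ (identified via $\beta^*$), and that the structure group along the facet $F_i$ is cyclic of order $m_i$ --- precisely the linear-sigma-model reduction recalled in the preceding subsection, specialized there to $\CP(a^2,b^2,c^2)$. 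For uniqueness, given $(M,\omega,\phi)$ with polytope $P$ one identifies it with this model: over $\Int P$ the map $\phi$ is a free (hence, $\Int P$ being contractible, trivial) $T^n$-fibration, and the action--angle coordinates pin down $\omega$ there uniquely, matching $M_P$; to push the identification over $\partial P$ one glues in the local normal forms above --- which depend only on the combinatorics of $P$ and the labels, data shared by $M$ and $M_P$ --- patches them with a partition of unity, and absorbs the resulting change of symplectic form by an equivariant Moser argument (the cohomology class being constrained by $P$). Hence any two toric orbifolds with isomorphic labeled polytopes are equivariantly symplectomorphic, each to the common model.

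\textbf{The main obstacle.} The only genuinely new input beyond Delzant is the local analysis of the second step: setting up the orbifold slice theorem and reading off from it that vertices are rational but possibly non-smooth, and that each facet carries a well-defined constant label $m_F$. Once the correct local models are in hand, the Delzant construction and the Moser/normal-form gluing run verbatim in the orbifold setting, the finite stabilizers being carried along harmlessly. One further point of care: in the reduction, $K$ must be the \emph{connected} subgroup generated by $\mathfrak{k}$, since the image lattice $\beta(\Z^d)$ is typically a proper finite-index sublattice of $\Z^n$ --- and this very discrepancy is what produces the orbifold points, while keeping the residual group an honest $n$-torus.
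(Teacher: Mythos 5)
The paper offers no proof of Theorem \ref{LT}: it is imported wholesale from Lerman--Tolman \cite{LT97} (see also \cite{Abr01}) and only \emph{used} (e.g.\ in Lemma \ref{lem:labels} and in the description of $\phi_P$), so there is no internal argument to compare with -- what you have written is a sketch of the Lerman--Tolman theorem itself. Your outline does follow their strategy (orbifold convexity plus local normal forms to produce the labels; an orbifold Delzant reduction of $\C^d$ for existence; action--angle coordinates, local models and an equivariant Moser-type argument for uniqueness), but one step you explicitly single out as the key ``point of care'' is wrong, and it is exactly the step where the orbifold theory differs from Delzant. In the reduction one must quotient $\phi_K^{-1}(0)$ by the \emph{full} kernel $N=\ker\bigl(T^d\to T^n\bigr)$ of the homomorphism of tori induced by $\beta(e_i)=m_i\nu_i$, and this kernel is disconnected precisely when $\beta(\Z^d)$ is a proper finite-index sublattice of $\Z^n$; taking only the connected subtorus $K$ generated by $\mathfrak{k}=\ker\beta$, as you insist, produces the wrong orbifold and breaks the ``every labeled polytope arises'' half of the theorem. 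Concretely, take the interval with both facet labels equal to $2$ (the football with two $\Z/2$ points): here $d=2$, $n=1$, $\beta(e_1)=2$, $\beta(e_2)=-2$, so $\mathfrak{k}$ is the diagonal line and the connected $K$ is the diagonal circle, which acts \emph{freely} on the level sphere $S^3$; the reduced space is the smooth $S^2$ with trivial orbifold structure and halved residual weights, not the labeled football. The $\Z/m_i$ structure groups over $\operatorname{Int}F_i$ are the stabilizers $\{t\in N: t_j=1,\ j\neq i\}\cong\Z/m_i$, and in such examples these elements lie outside the identity component of $N$, so they are lost if one reduces only by $K$ (equivalently, one must further quotient by the finite group $N/K$). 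So the discrepancy $\Z^n/\beta(\Z^d)$ is not what ``harmlessly produces the orbifold points''; mishandling it destroys them.

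Two further remarks. The reason this subtlety is invisible in the paper's own computation (Lemma \ref{lem:k}) is that for $P(a^2,b^2,c^2)$ the $2\times 2$ minors of $\beta$ include $a^2$ and $-b^2$, which are coprime, so $\beta(\Z^3)=\Z^2$ and the full kernel is already connected and equal to the circle generated by $(a^2,b^2,c^2)$ -- a special feature of mutually coprime Markov triples, not of the general construction. Finally, the uniqueness half of your sketch (``glue local normal forms with a partition of unity and absorb by equivariant Moser'') is only a gesture: in \cite{LT97} this is where the real work lies, carried out by a \v{C}ech-cohomological argument showing the vanishing of the obstructions to patching the local equivariant identifications, and your sketch would need that input (or an equivalent one) to count as a proof.
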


 The following lemma computes the labels of the facets of the moment polytope of
 $\CP(a^2,b^2,c^2)$, which plays an important role in our proof.

  \begin{lem}\label{lem:labels} Denote by $P$ the moment polytope of $\CP(a^2,b^2,c^2)$ of the associated residual $T^2$-action. Then the label $m_F$ on every facet $F$ of $P$ is 1.
  \end{lem}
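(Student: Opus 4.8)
The plan is to combine the Lerman--Tolman classification (Theorem~\ref{LT}) with the elementary number theory of Markov triples. By Theorem~\ref{LT} the label $m_F$ of a facet $F$ of $P$ equals the order of the orbifold structure group of an arbitrary point of $\phi_P^{-1}(\mathrm{Int}\,F)$, so it suffices to identify these structure groups for the weighted projective plane. I would use the realization of $\CP(a^2,b^2,c^2)$ as the reduced space of $\C^3$ by $K$, which by Lemma~\ref{lem:k} is the same as the quotient $(\C^3\setminus\{0\})/\C^*$ for the $\C^*$-action \eqref{eq:zeta}.

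First I would record the combinatorics: $P$ is a triangle whose three vertices correspond to the orbifold points $[1:0:0],[0:1:0],[0:0:1]$ with structure groups $\Z/a^2\Z,\Z/b^2\Z,\Z/c^2\Z$, and whose three facets are the edges joining these vertices in pairs. Next, for the edge $F_{ab}$ joining $[1:0:0]$ and $[0:1:0]$, a point of $\mathrm{Int}\,F_{ab}$ is represented by $(x,y,0)$ with $xy\neq 0$; under \eqref{eq:zeta} its isotropy group is $\{\zeta\in\C^*:\zeta^{a^2}=\zeta^{b^2}=1\}$, which is cyclic of order $\gcd(a^2,b^2)$. Hence $m_{F_{ab}}=\gcd(a^2,b^2)$, and symmetrically the other two facets carry labels $\gcd(b^2,c^2)$ and $\gcd(a^2,c^2)$. (The same three integers can alternatively be extracted from the primitive inward edge vectors of $P$ given through the linear map $\beta$ of the previous subsection; the two computations agree.)

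It then remains to show $\gcd(a^2,b^2)=\gcd(b^2,c^2)=\gcd(a^2,c^2)=1$, i.e.\ that the entries of a Markov triple are pairwise coprime. This is classical and can be read off from the Markov tree: every Markov triple is obtained from $(1,1,1)$ by finitely many mutations $(a,b,c)\mapsto(a,b,3ab-c)$, and each of the three pairwise greatest common divisors is invariant under such a mutation, since $\gcd(a,b)$ is unchanged while $\gcd(a,3ab-c)=\gcd(a,c)$ and $\gcd(b,3ab-c)=\gcd(b,c)$. As all pairwise gcd's of $(1,1,1)$ equal $1$, the same holds for every Markov triple, so $\gcd(a^2,b^2)=\gcd(a,b)^2=1$ and likewise for the other two facets; thus $m_F=1$ for every facet $F$ of $P$.

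The argument is genuinely short; the only step needing care is the justification that the residual $T^2$-action on the reduced space presents $\CP(a^2,b^2,c^2)$ together with its \emph{standard} toric--orbifold structure, so that the abstract facet label of $P$ may legitimately be computed as the isotropy group of \eqref{eq:zeta}. This is precisely what Lemma~\ref{lem:k} together with the Delzant-type correspondence of Theorem~\ref{LT} provides; once it is in place, the lemma reduces entirely to the pairwise coprimality of Markov triples.
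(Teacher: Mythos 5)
Your proposal is correct and follows essentially the same route as the paper: the facet label is identified (via Theorem \ref{LT}) with the orbifold structure group of points over the interior of the facet, which for $\CP(a^2,b^2,c^2)$ is cyclic of order the gcd of the weights with nonzero coordinate, and the conclusion then rests on the pairwise coprimality of Markov triples. The only difference is cosmetic: you verify coprimality directly by its invariance under Markov mutations along the tree from $(1,1,1)$, whereas the paper simply cites this fact from \cite[Proposition 2.2]{Vi14}.
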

  \begin{proof}
  Recall that the orbifold structure group of a point $[x:y:z]$ in the weighted projective plane $\CP(w_1,w_2,w_3)$ is $\Z / g\Z$ where $g$ is the greatest common divisor of those weights
  whose component is non-zero.
  For a point $[x:y:z]$ with $x \neq 0$ and $y \neq 0$, gcd of $a^2$ and $b^2$ is 1.
  Recall that $a, b, c$ are mutually coprime. (See \cite[Proposition 2.2]{Vi14}.)
  Therefore at those points the orbifold structure groups are trivial.
  Similarly every point $[x:y:z]$ but three orbifold points has a trivial orbifold structure group.
  Therefore each point fibering over the interior of each facets has a trivial orbifold structure group, which means the point is smooth.
  In other words, the labels of the other facets are 1. This finishes the proof.
  \end{proof}

Finally we find an explicit coordinate formula of the moment map $\phi_P: M_P \to (\R^2)^*$.
  Let $\pi:\phi_{K}^{-1}(0) \to M_P$ be the quotient map and $i: \phi_K^{-1}(0) \to \C^3$
  the inclusion map. Then by definition of the moment map \cite{MW74},
  the associated moment map $\phi_{P}:M_{P} \to (\R^{2})^{*}$ satisfies the following equation
  \begin{equation} \label{moment}
  \beta^{*} \circ \phi_{P} \circ \pi = \phi_{T^{3}} \circ i.
  \end{equation}

  Let $(x,y)$ (row vector) be the coordinate of $(\R^2)^*$ and $(z_1,z_2,z_3)$ (column vector)
  be the complex coordinate of $\C^3$.
  With respect to the standard basis, the map $\beta: \R^3 \to \R^2$ can be written as
  $$
   \beta = \begin{pmatrix} -(bl_{2}-1) & -(al_{1}-1) & 1 \\ -b^{2} & a^{2} & 0 \end{pmatrix}.
  $$
  Substituting $(z_1,z_2,z_3) \in \phi_{T^3}^{-1}(0)$ into the left hand side of \eqref{moment} and setting
  $$
  (x,y) = \phi_P(\pi(z_1,z_2,z_3)),
  $$
  we get
  \begin{eqnarray*}
  \begin{pmatrix} \frac{|z_{1}|^2}{2}-b^{2}c^{2} \\ \frac{|z_{2}|^2}{2} \\ \frac{|z_{3}|^2}{2} \end{pmatrix} = \beta^{*} \begin{pmatrix} x \\ y \end{pmatrix}
  = \begin{pmatrix} -(bl_{2}-1)x-b^{2}y \\ -(al_{1}-1)x+a^{2}y \\ x \end{pmatrix}
  \end{eqnarray*}

  By equating the first and the last terms of the equation and solving it for $(x,y)$,
  we obtain the coordinate formula of the associated moment map $\phi_{P}$ whose value at
  $[z_1:z_2:z_3] \in \CP(a^2,b^2,c^2)$ is given by
  \be\label{eq:phiP}
  (x,y) = \left (\frac{|z_3|^2(abc)^2}{a^2|z_1|^2+b^2|z_2|^2+c^2|z_3|^2},
  \frac{\left(|z_2|^2+(al_1-1)|z_3|^2\right)(bc)^2}{a^2|z_1|^2+b^2|z_2|^2+c^2|z_3|^2}\right)
  \ee
  for all $(z_1,z_2,z_3) \in \phi_{T^3}^{-1}(0) \subset \C^3$. This ends our symplectic description of
  symplectic orbifold $\CP(a^2,b^2,c^2)$, the $T^2$-action and its associated moment
  map $\phi_P: \CP(a^2,b^2,c^2) \to (\R^2)^*$.
  \begin{rmk}
  Note that $\CP(a^2,b^2,c^2)$ is covered by three orbifold charts.
  For example, $\{ [x:y:z] \mid x \neq 0\}$ is one of them.
  This is homeomorphic to $\C^2/\mu_{a^2}$ where the group action is given by $\zeta \cdot (y,z)=(\zeta^{b^{2}}y,\zeta^{c^{2}}z)$ i.e., its associated weights are given by $\frac{1}{a^2}(b^2,c^2)$.
  Denote this orbifold chart as $U_{a^2}$.
  Similarly, denote $U_{b^2}=\{ [x:y:z] \mid y \neq 0\}$ and $U_{c^2}=\{ [x:y:z] \mid z \neq 0\}$.
  It is easy to check that on each orbifold chart, the map $\phi_P$ in \eqref{eq:phiP} is invariant under the corresponding group action. Thus $\phi_P$ is well-defined.
  \end{rmk}

  Three orbifold points $[1:0:0],[0:1:0],[0:0:1]$ are mapped to the vertices opposite to the edges
  associated to $a^2u_1, b^2u_2, c^2u_3$, respectively.
  One can check that points of the form $[0:y:z] \in \CP(a^2,b^2,c^2)$ fibers over the
  points contained in the edge corresponding to $a^2u_1$.
  Similarly points of the form $[x:0:z],[x:y:0]$ fibers over points in the edge $b^2u_2,c^2u_3$, respectively.

We now visualize the image in $(\R^2)^*$ of the moment map $\phi_P$.
First we recall that every convex polytope $P$ can be written as the intersection of
 a finite number of oriented half-spaces. To define
 a labeled polytope, we attach a label $m_i$ on the $i$-th facet
 and consider the intersection
 $$
  P=\bigcap\limits^3_{i=1}\{ (x,y) \in (\R^2)^{*} : L_i(x,y)=\langle (x,y), m_{i}\mu_i \rangle - \lambda _i \geq 0 \}
 $$
 where $\mu_i$ is the $i$-th inward primitive integral vector normal to the $i$-th facet and $\lambda_i$ is areal number.

First we recall the polytope considered in \cite{Vi14}
for his construction of $T_{a,b,c}$.
In the above expression of general $P$, we consider the edge of affine length $a^2,b^2$, respectively, as the first, second facet. Then we get
 $$
  \mu_1 = (-(bl_{2}-1),-b^2), \, \mu_2 = (-(al_{2}-1),a^2),\, \mu_3 = (1,0)
 $$
 and consider inequalities
  \begin{eqnarray*}
  L_1(x,y)&:=&-(bl_{2}-1)x-b^{2}y+b^{2}c^{2} \geq 0\\
  L_2(x,y)&:=& -(al_{1}-1)x+a^{2}y \geq 0\\
  L_3(x,y)&:=& x \geq 0
  \end{eqnarray*}
  with $\lambda_1=-b^{2}c^{2}, \lambda_2=\lambda_3=0$. We denote the
  polytope given by this equation by
  \be\label{eq:P}
  P = P(a^2,b^2,c^2).
  \ee
This is precisely the one used in \cite{Vi14}.
See Figure \ref{fig:2}.
  \begin{figure}[h]
  \centering
  \includegraphics[scale=1.1]{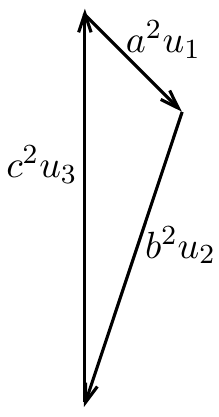}
  \caption{Moment polytope of $\CP(a^2,b^2,c^2)$}
  \label{fig:2}
  \end{figure} \\

 The following proves that the
 \emph{labeled polytope $P$} of the toric orbifold $\CP(a^2,b^2,c^2)$ is exactly the moment polytope described above equipped with label 1 on the interior of each facet.
\begin{prop} The moment image of $\phi_P:\CP(a^2,b^2,c^2) \to (\R^2)^*$ given above is
\eqref{eq:P}.
\end{prop}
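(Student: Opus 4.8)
The plan is to verify directly that the moment map image of $\phi_P$ described in \eqref{eq:phiP} coincides with the polytope $P(a^2,b^2,c^2)$ defined by the three inequalities $L_1, L_2, L_3 \geq 0$. Since the residual $T^2$-action on $\CP(a^2,b^2,c^2) = \phi_K^{-1}(0)/K$ is obtained by the Lerman--Tolman/Delzant reduction, its moment image is automatically a labeled polytope by Theorem \ref{LT}, so the content is purely to identify this polytope with the one in \eqref{eq:P}. First I would recall that for $(z_1,z_2,z_3) \in \phi_{T^3}^{-1}(0)$ the constraint reads $|z_1|^2/2 - b^2 c^2 = -(bl_2-1)x - b^2 y$, $|z_2|^2/2 = -(al_1-1)x + a^2 y$, and $|z_3|^2/2 = x$, where $(x,y) = \phi_P(\pi(z_1,z_2,z_3))$; this is exactly the defining equation \eqref{moment} written out via the matrix form of $\beta^*$.

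The key observation is then that the three nonnegativity constraints $|z_i|^2 \geq 0$ translate precisely into $L_3(x,y) = x \geq 0$, $L_2(x,y) = -(al_1-1)x + a^2 y \geq 0$, and $L_1(x,y) = -(bl_2-1)x - b^2 y + b^2 c^2 \geq 0$ respectively. Conversely, I would check that any $(x,y)$ satisfying all three inequalities is actually attained: given such $(x,y)$, define $|z_i|^2$ by the three displayed formulas (all nonnegative by the inequalities), pick any representatives $z_i$ with those moduli, and observe that automatically $(z_1,z_2,z_3)$ lies in the level set $\phi_{T^3}^{-1}(0)$ because the defining equations were solved for $(x,y)$ from the same linear system — hence it descends to a point of $M_P$ mapping to $(x,y)$. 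Thus the image equals $\{(x,y) : L_1,L_2,L_3 \geq 0\} = P(a^2,b^2,c^2)$, and by Lemma \ref{lem:labels} all facet labels are $1$, matching the claimed labeled polytope.

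The main obstacle, such as it is, is bookkeeping: one must make sure the matrix $\beta^*$ is correctly transposed from the given $\beta = \begin{pmatrix} -(bl_2-1) & -(al_1-1) & 1 \\ -b^2 & a^2 & 0 \end{pmatrix}$, that the sign of $\lambda_1 = -b^2c^2$ is tracked consistently through \eqref{moment}, and that the edge-to-facet correspondence (the edge of affine length $a^2$ being the facet with inward normal $\mu_1$, etc.) is stated in the same order as in the construction of \eqref{eq:P}. One should also note that the formula \eqref{eq:phiP} was derived by substituting a point of $\phi_{T^3}^{-1}(0)$ and eliminating $|z_1|^2$; so strictly the cleanest argument is to work from the linear system $\beta^* (x,y)^T = \phi_{T^3}(z)$ directly rather than from the closed form \eqref{eq:phiP}, and then remark that \eqref{eq:phiP} is just the explicit solution of that system. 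Finally I would point the reader to Figure \ref{fig:2} as the picture realizing this polytope, and note that surjectivity onto the boundary facets is exactly the content of the remarks already made, namely that points of the form $[0:y:z]$, $[x:0:z]$, $[x:y:0]$ fiber over the edges with normals $\mu_1, \mu_2, \mu_3$.
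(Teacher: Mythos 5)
Your proof is correct and amounts to the verification the paper leaves implicit (its own proof is the one-line ``straightforward to check from the explicit formula \eqref{eq:phiP}''): the nonnegativity of the $|z_i|^2$ yields precisely $L_1,L_2,L_3\geq 0$, and solving the linear system $\beta^*(x,y)^T=\phi_{T^3}(z)$ gives surjectivity onto the polytope \eqref{eq:P}. The only cosmetic point worth fixing is that the relevant level set is $\phi_K^{-1}(0)$ rather than $\phi_{T^3}^{-1}(0)$ (a notational slip inherited from the text), and in the converse direction it is exactness, $\iota^*\circ\beta^*=0$, that guarantees your constructed point lies in $\phi_K^{-1}(0)$ and hence descends to $M_P$.
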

\begin{proof} It is straightforward to check from the explicit formula \eqref{eq:phiP}.
\end{proof}

As described in \cite{Vi14}, its moment polytope under the residual torus action is
  the triangle with edges parallel to the vectors $a^2u_1, \, b^2u_2, \, c^2u_3$ where they satisfy the balancing condition
  $$
  a^2u_1+b^2u_2+c^2u_3=(0,0)
  $$
  and have the form
  $$
  u_1=(b^2,-(bl_2-1)), \, u_2=-(a^2,al_1-1),\, u_3=(0,1)
  $$
  and with the vertices located at the projection of the orbifold points.
  From $a^2u_1+b^2u_2+c^2u_3=(0,0),$ we obtain a relation
  \be \label{eq:al2+bl1=3c}
  al_2+bl_1=3c.
  \ee

  Here each of $l_1, \, l_2, \, l_3$ is a positive integer coprime to $a, \, b, \, c$ respectively.
  Each of
  these integers can be realized as a winding number of some section of a trivialization over the boundary of unit disk appearing in the definition of \emph{Lagrangian pinwheel},
  embedded in $\CP^2$. (See \cite{Kho13,ES18}.) Furthermore, Evans and Smith \cite{ES18} proved that
  the integers satisfy the following congruence relations;

  \begin{align*}
  l_{1}^2 &\equiv -9 \pmod{a} \\
  l_{2}^2 &\equiv -9 \pmod{b} \\
  bl_{1} &\equiv \pm 3c \pmod{a} \\
  cl_{1} &\equiv \pm 3b \pmod{a} \\
  al_{2} &\equiv \pm 3c \pmod{b} \\
  cl_{2} &\equiv \pm 3a \pmod{b} \\
  \end{align*}

  Then using the coordinates of $(\R^2)^*$, $l_{i}$'s could be computed explicitly from Figure \ref{fig:4} below, and following a series of nodal surgery operations followed by transferring the cut operation, starting from the moment polytope of $\CP^2$ which corresponds to Markov triple $(1,1,1)$. (See \cite[Section 2]{Vi14}.)

  \begin{rmk}
  When deriving \eqref{eq:phiP}, using $-(bl_2-1)x-b^2y=\frac{|z_1|^2}{2}-b^2c^2,$ we get another expression for
  $$
  y=c^2 - \frac{(|z_1|^2+(bl_2-1)|z_3|^2)(ac)^2}{a^2|z_1|^2+b^2|z_2|^2+c^2|z_3|^2}.
  $$
  Then one can easily check that using coordinates on orbifold charts $U_{a^2}, \, U_{b^2}, \, U_{c^2}$ the image of $\phi_P$ is exactly $P$.
  In other words, on $U_{a^2}$, its image point $(x,y)$ of \eqref{eq:phiP} satisfies
  $$
  L_1(x,y) > 0, \, L_2(x,y) \geq 0, \, L_3(x,y) \geq 0.
  $$
  On $U_{b^2}$, the image point $(x,y)$ satisfies
  $$
  L_1(x,y) \geq 0, \, L_2(x,y) > 0, \, L_3(x,y) \geq 0.
  $$
  On $U_{c^2}$, the image point satisfies
  $$
  L_1(x,y) \geq 0, \, L_2(x,y) \geq 0, \, L_3(x,y) > 0.
  $$
  \end{rmk}

  The associated symplectic form $\omega_P$ can be written explicitly in terms of labelled polytope data
  using the analogue of Guillemin's formula \cite{Gu96}.

  \begin{thm}\emph{\cite{CDG03}}\label{GulleminFormula}
  Let $P$ be a labelled polytope as above and $d$ be the number of facets of $P$.
  Define a function
  \begin{eqnarray}
  L_\infty(u) = \sum_{i=1}^{d}\langle u, m_i \mu_i \rangle \label{eq:ellinfty}
  \end{eqnarray}
  on $(\R^n)^*$.
  We have
  \begin{equation}\label{eq:omegaP}
  \omega_P =\sqrt{-1}\partial\overline{\partial}\phi_{P}^*\left(\sum_{i=1}^{d}\lambda_i\log L_i + L_\infty\right)
  \end{equation}
  on $\phi_{P}^{-1}(\text{\rm Int}\,P)$.
  \end{thm}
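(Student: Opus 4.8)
The plan is to recover \eqref{eq:omegaP} as the orbifold analogue of Guillemin's K\"ahler potential formula \cite{Gu96} (this is the content of \cite{CDG03}), by exploiting the realization of $\CP(a^2,b^2,c^2)$ --- and, more generally, of any compact toric K\"ahler orbifold $M_P$ --- as the symplectic quotient $\phi_K^{-1}(0)/K$ of $\C^d$ built in the previous subsections, where $d$ is the number of facets of $P$. I would equip $\C^d$ with its standard K\"ahler form $\omega_{\C^d}=\sqrt{-1}\,\partial\overline\partial\Phi_0$, $\Phi_0(z)=\sum_{i=1}^d|z_i|^2/2$, which is $T^d$-invariant, and work over the open dense subset $\{z_i\neq 0\ \forall i\}\cap\phi_K^{-1}(0)$, whose $K$-quotient is the open orbit $U:=\phi_P^{-1}(\Int P)\subset M_P$; this is a smooth locus (indeed, by Lemma \ref{lem:labels}, $M_P$ is an orbifold only at the three vertices). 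On $U$ the residual $T^2$ supplies logarithmic holomorphic coordinates in which $\omega_P$ admits a global K\"ahler potential $F$; the idea is to compute $F$ via the symplectic-potential description of toric K\"ahler manifolds (Guillemin \cite{Gu96}, Abreu \cite{Abr01}) and then pull back along $\phi_P$.

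The first step is to Legendre-dualize $\Phi_0$: written in the $T^d$-moment coordinates $\mu_i=|z_i|^2/2$, the symplectic potential of $\C^d$ is $\sum_i\mu_i\log\mu_i$ up to an affine function of $\mu$ and an overall normalization. The second step uses the fact that performing the symplectic reduction by $K$ amounts, on the polytope side, to restricting this symplectic potential to the affine slice cut out by the moment relation \eqref{moment}; but \eqref{moment} unwinds, via the computation displayed just after it, to $\mu_i=|z_i|^2/2=L_i\circ\phi_P$ for each $i$ (with the label $m_i$ built into $L_i$). Hence the symplectic potential of $M_P$ on $\Int P$ is $g_P(u)=\sum_{i=1}^d L_i(u)\log L_i(u)$ up to an affine function --- precisely Guillemin's formula in orbifold form. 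For the third step, the standard correspondence between toric K\"ahler and symplectic potentials gives $\omega_P|_U=\sqrt{-1}\,\partial\overline\partial F$ with $F$ the Legendre transform of $g_P$, read as a function of the logarithmic coordinates through $\partial g_P/\partial u$. Substituting $g_P=\sum_i L_i\log L_i$ and using the two elementary identities $\langle u,m_i\mu_i\rangle=L_i(u)+\lambda_i$ and $\sum_i L_i(u)=L_\infty(u)-\sum_i\lambda_i$, the $L_i\log L_i$ contributions cancel and one is left with $F=\phi_P^*\!\left(\sum_i\lambda_i\log L_i+L_\infty\right)$, which is \eqref{eq:omegaP}. The residual affine ambiguity in $g_P$ only translates the logarithmic coordinates by a constant (i.e.\ rescales the $z_i$) and shifts $F$ by a constant, both annihilated by $\partial\overline\partial$.

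The main obstacle I anticipate is twofold. First, one must make the reduction step rigorous in the orbifold category: the rule ``symplectic potential of the reduction $=$ restriction of the symplectic potential to the slice'', while routine for smooth toric K\"ahler manifolds (cf.\ \cite{Gu96, Abr01}), has to be re-examined when $\C^d\to M_P$ is an \emph{orbifold} symplectic quotient. Second, more mundane but genuinely necessary, one must keep careful track of the normalization constants, so that the two Legendre transforms compose to produce \emph{exactly} $\sum_i\lambda_i\log L_i+L_\infty$ and not merely a function with the same complex Hessian as the right-hand side of \eqref{eq:omegaP}. By contrast, the orbifold singular points cause no difficulty: the entire argument lives over the smooth stratum $U=\phi_P^{-1}(\Int P)$, on which both sides of \eqref{eq:omegaP} are manifestly defined --- which is also the reason the statement is confined to that open set.
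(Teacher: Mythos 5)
This theorem is quoted in the paper from \cite{CDG03} without proof, so there is no in-paper argument to compare against; your sketch is, in essence, the standard proof from the cited literature (Guillemin's reduction/Legendre-transform argument \cite{Gu96}, extended to labelled polytopes as in \cite{CDG03, Abr01}). Your computation is the right one: on the level set one has $|z_i|^2/2=L_i\circ\phi_P$, the symplectic potential $\sum_i L_i\log L_i$ Legendre-dualizes so that the $L_i\log L_i$ terms cancel and leave $\sum_i\lambda_i\log L_i+L_\infty$ up to an affine summand killed by $\partial\overline{\partial}$, and the two caveats you flag (justifying ``potential of the quotient $=$ restriction of the potential'' in the orbifold category, and tracking normalizations) are exactly the points handled in \cite{CDG03}. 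The only small slip is invoking Lemma \ref{lem:labels} (specific to $\CP(a^2,b^2,c^2)$) for smoothness of $\phi_P^{-1}(\Int P)$; for a general labelled polytope this instead follows from the Lerman--Tolman classification \cite{LT97}, since the torus acts freely over $\Int P$.
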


  In our case, $d=3$ and $n=2$.
  For $(X,Y)$ in $(\R^{2})^*$, $$L_\infty(X,Y)=\langle (X,Y), \sum_{i=1}^{3} \mu_{i} \rangle = [3-(al_{1}+bl_{2})]X + (a^{2}-b^{2})Y.$$

\subsection{Symplectic rational blow-down and almost toric fibration}

One realization of
  the $T_{a,b,c}$ tori is given as the barycentric fibers of various
  almost toric fibrations on $\CP^2$ depending on $(a,b,c)$.
  As mentioned before, its base diagram $(B,\Lambda)$ looks like the moment polytope $P=P(a^2,b^2,c^2)$
  of $\CP(a^2,b^2,c^2)$ except that there is a node at each vertex with a cut in the direction of the corresponding node.

  Each small neighborhood of a vertex can be realized as an almost toric base with one node whose boundary is a lens space of the form $L(k^2,kl-1).$
  Let us describe a smooth 4 dimensional manifold fibering on this neighborhood, the corresponding base in $P$ and its symplectic rational blow-down surgery.

  Let $k,l$ be a pair of coprime positive integers.
  Consider the base diagram given by the open subset $U_{k,l} \subset (\R^2)^*$ consisting of the points
  $(x,y)$ contained in the intersection of the half-spaces
  $$
  y \geq \frac{kl-1}{k^2}x, \, x \geq 0, \, y > 0,
  $$
  which is bounded by an arbitrary embedded arc $\gamma$ joining two points on the edges respectively contained  in
  the two lines $y = \frac{kl-1}{k^2}x$, $x = 0$. It also carries a node on a cut in the direction of the vector $(k,l)$.

  \begin{figure}[h]
  \centering
  \includegraphics[scale=0.9]{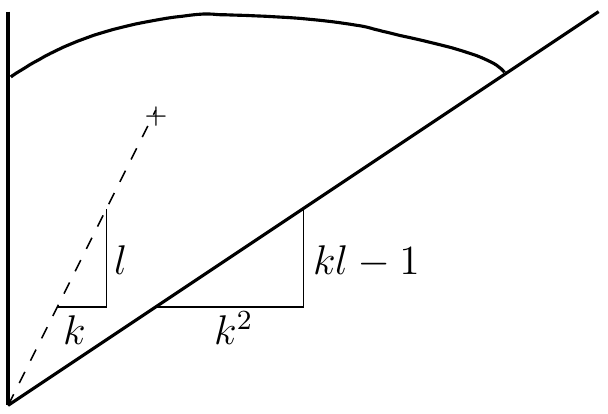}
  \caption{Almost toric base $U_{k,l}$}
  \label{fig:3}
  \end{figure}

  A smooth 4 dimensional manifold $B_{k,l}$ that fibers over $U_{k,l}$
 is a \emph{rational homology ball} whose boundary is a lens space $L(k^2,kl-1).$
  This lens space fibers over the embedded arc $\gamma$ \cite[Section 9.3]{Sy03} contained in $U_{k,\ell}$.

  On the other hand, it is proved in \cite[Proposition 2.2]{Vi14} that the boundary of a neighborhood of
  the orbifold point projected to the vertex opposite to $a^{2}u_1, b^2u_2,c^2u_3$ is the lens space of the form $$
  L(a^2,al_{1}-1),\, L(b^2,bl_{2}-1), \, L(c^2,cl_{3}-1),
  $$
  respectively.
  Here each $l_i$ for $i=1,2,3$ is a positive integer coprime to $a,b,c$, respectively.
  Now we consider a small neighborhood $N_{a^2}$ of an orbifold point which fibers over the vertex
  opposite to the edge $a^2u_1.$
  As two collar neighborhoods of a boundary of $N_a$ and of $B_{a,l_1}$ fiber over the same simply connected base, there exists a symplectomorphism $\psi_a$ from a collar neighborhood of the boundary of $N_{a^2}$ to a collar neighborhood of the boundary of $B_{a,l_1}$.
  Similar results hold for the remaining neighborhoods $N_{b^2}$ and $N_{c^2}$.(See \cite{Sy01})

  Then the rational blow-down surgery replaces a neighborhood $N_{a^2},N_{b^2},N_{c^2}$ of an orbifold point by a rational homology ball $B_{a,l_1},B_{b,l_2},B_{c,l_3}$, respectively.
  Furthermore, the symplectic structures $\omega_P$ and $\omega_{a,l_1},\omega_{b,l_2},\omega_{c,l_3}$ induce a symplectic structure as follows.

  As performed in \cite[Corollary 2.5]{Vi14}, applying rational blow-down on each neighborhood of the
  three orbifold points of $\CP(a^2,b^2,c^2)$ yields an almost toric fibration of $\CP^2$. We denote by $\CP^2_{a,b,c}$
   the total space of this almost toric fibration for some base $(B,\Lambda)$. More precisely,
applying the rational blow-down surgery on each neighborhood of three orbifold points in $\CP(a^2,b^2,c^2)$, we obtain an almost toric manifold $\CP^2_{a,b,c}$ symplectomorphic to $\CP^2$
  $$
  \CP^2_{a,b,c}: = \left(\CP(a^2,b^2,c^2) \setminus (N_{a^2} \cup N_{b^2} \cup N_{c^2})\right) \bigcup _{\psi} (B_{a,l_1} \cup B_{b,l_2} \cup B_{c,l_3})
  $$
  equipped with an interpolated symplectic form
  \be\label{eq:tilde-omega}
  \widetilde{\omega}=(1- \chi_a - \chi_b-\chi_c) \omega_{P} + \chi_a \lambda_a \omega_{a,l_1} + \chi_b \lambda_b \omega_{b,l_2} + \chi_c \lambda_c \omega_{c,l_3}.
  \ee
  Here each $\chi_a$ is a cut-off function which is $1$ on the corresponding rational homology ball and $0$ otherwise. And $\lambda_a, \lambda_b, \lambda_c$ are suitably chosen positive real-valued functions
  so that $\widetilde \omega$ becomes nondegenerate and closed.

  Corresponding to base diagram $(B,\Lambda)$,
  $P$ has three nodes with eigenrays \cite{Vi14}) pointing towards
  $$
  (a,l_1), \, (b,-l_2), \, (-(a^2+b^2),bl_2-al_1)
  $$
  issued at the vertices opposite to $a^2u_1, b^2u_2, c^2u_3$, respectively.
  The three eigenrays meet at one point, the \emph{labeled barycenter},
  in the interior of $P$.
  (See Figure \ref{fig:4}. Red dashed lines represent respective cuts and the blue dot represents the labeled barycenter.)

  Following \cite[Remark 2.3]{Vi14}, we set $w_1=-(a,l_1), \, w_2=(-b,l_2)$ and $w_3$ as the
  vectors representing cuts.
  They satisfy relations
  \be \label{eq:wbary-u_3}
  \frac{acw_2-bcw_1}{3}=c^2u_3
  \ee

  \be \label{eq:wbary-u_2}
  \frac{bcw_1-abw_3}{3}=b^2u_2
  \ee

  \be \label{eq:wbary-u_1}
  \frac{abw_3-acw_2}{3}=a^2u_1
  \ee

  and
  \be \label{eq:wbary}
  \frac{abc}{3}(aw_1+bw_2+cw_3)=0.
  \ee
  From \eqref{eq:wbary-u_2}, we have $aw_3=cw_1-3bu_2=(a(3ab-c),3b(al_1-1)-cl_1)$.
  The slope of $w_3$ is then
  \beastar
  \frac{l_1(3ab-c)-3b}{a(3ab-c)} &=& \frac{l_1}{a} - \frac{3b}{a\frac{a^2+b^2}{c}} \\
  &=& \frac{l_1}{a} - \frac{3bc}{a(a^2+b^2)}.
  \eeastar
  By \eqref{eq:wbary-u_1}, the slope of $w_3$ is
  \be
  \frac{(c-3ab)l_2+3a}{b(3ab-c)}.
  \ee
  Also from \eqref{eq:wbary}, the slope of $w_3$ can be written as
  \be
  \frac{al_1-bl_2}{a^2+b^2}.
  \ee
  These three expressions of the slope of $w_3$ are indeed the same.

  By translation, we may identify the lower left vertex of $P$ with the origin in $(\R^2)^*.$
  Consequently vertices opposite to edges $b^2u_2,c^2u_3$ respectively are located at the points
  $$
  (0,c^2), \, (a^2b^2,c^2-a^2(bl_2-1)).
  $$
  Then we derive $(a^2b^2,c^2-a^2(bl_2-1))=(a^2b^2,b^2(al_1-1))$ using \eqref{eq:al2+bl1=3c}.

  \begin{figure}[h]
  \centering
  \includegraphics[scale=1.1]{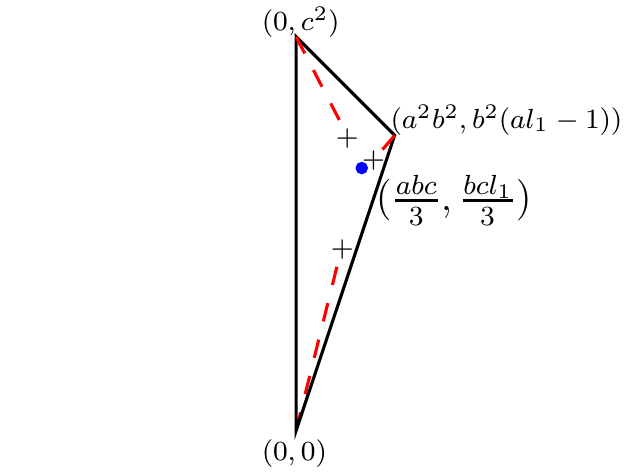}
  \caption{The polytope $P$}
  \label{fig:4}
  \end{figure}

  We summarize the above discussion into

  \begin{lem}\label{lem:vianna-tori-location} Let $P$ be the polytope associated to
  the above base diagram $(B,\Lambda)$. Then the torus $T(a^2,b^2,c^2)$ is located at
  the point
  \be\label{eq:bary-center}
  \left(\frac{abc}{3},\frac{bcl_1}{3}\right) \in \Int P(a^2,b^2,c^2).
  \ee
  \end{lem}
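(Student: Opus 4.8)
The plan is to locate the torus $T(a^2,b^2,c^2)$ as the intersection point of the three eigenrays (equivalently the labeled barycenter), and then simply compute that intersection point in the explicit coordinates already fixed. Since Lemma~\ref{lem:vianna-tori-location} is stated as a summary of the preceding discussion, the proof is essentially a direct calculation using the data already assembled: the three eigenrays issued at the vertices opposite to $a^2u_1$, $b^2u_2$, $c^2u_3$ point in the directions $(a,l_1)$, $(b,-l_2)$, $(-(a^2+b^2),bl_2-al_1)$, and by translation we have normalized the lower-left vertex of $P$ to be the origin, so the vertices opposite to the edges $b^2u_2$ and $c^2u_3$ sit at $(0,c^2)$ and $(a^2b^2,b^2(al_1-1))$ respectively.

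First I would write down the parametric equations of two of the three eigenrays. The eigenray from the origin in direction $(a,l_1)$ is $\{(ta,tl_1) : t \geq 0\}$. The eigenray from $(0,c^2)$ in direction $(b,-l_2)$ is $\{(sb, c^2 - sl_2) : s \geq 0\}$. Setting these equal gives $ta = sb$ and $tl_1 = c^2 - sl_2$. From the first equation $s = ta/b$; substituting into the second yields $t(l_1 + al_2/b) = c^2$, i.e. $t(bl_1 + al_2) = bc^2$. By the relation \eqref{eq:al2+bl1=3c}, $al_2 + bl_1 = 3c$, so $t \cdot 3c = bc^2$, giving $t = bc/3$. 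Hence the intersection point is $(ta, tl_1) = \left(\frac{abc}{3}, \frac{bcl_1}{3}\right)$, exactly \eqref{eq:bary-center}.

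To complete the proof I would verify that the third eigenray, issued from the vertex $(a^2b^2, b^2(al_1-1))$ in direction $(-(a^2+b^2), bl_2-al_1)$, passes through the same point; this is the consistency statement already flagged in the text (``the three eigenrays meet at one point''), and it follows from the three coincident expressions for the slope of $w_3$ derived above together with the balancing relation \eqref{eq:wbary}. One checks that the vector from $\left(\frac{abc}{3},\frac{bcl_1}{3}\right)$ to $(a^2b^2, b^2(al_1-1))$ is a positive multiple of $(a^2+b^2, -(bl_2-al_1))$, using \eqref{eq:al2+bl1=3c} to eliminate $l_2$; this is a routine but slightly lengthier manipulation. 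Finally I would note that the computed point indeed lies in $\Int P(a^2,b^2,c^2)$ since it is the common interior intersection point of the three eigenrays, each of which enters the interior of the triangle from a distinct vertex.

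The only mild obstacle is bookkeeping: one must be careful that the direction vectors $w_1, w_2, w_3$ and the eigenray directions are taken with consistent signs and base points matching the normalization ``lower-left vertex $=$ origin,'' and that the Markov equation $a^2+b^2+c^2 = 3abc$ and \eqref{eq:al2+bl1=3c} are invoked at the right moments. No deeper input is needed — the symplectic reduction description of $\CP(a^2,b^2,c^2)$, the moment-map formula \eqref{eq:phiP}, and Lemmas~\ref{lem:k} and~\ref{lem:labels} have already done the conceptual work, and the labeled barycenter is by construction where the fiber $T(a^2,b^2,c^2)$ sits, so this lemma is purely the coordinate payoff.
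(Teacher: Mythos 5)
Your proposal is correct and follows essentially the same route as the paper: both identify $T(a^2,b^2,c^2)$ with the intersection of the three eigenrays issued from the vertices (the paper writes them in slope--intercept form, you parametrize them) and use the relation $al_2+bl_1=3c$ to compute the common point $\left(\frac{abc}{3},\frac{bcl_1}{3}\right)$. The paper's proof is just the same computation stated more tersely, so there is no substantive difference.
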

  \begin{proof} Using the slope formula we derived above,
  we easily check that the three eigenrays are given by
  \beastar
  y & = & \frac{l_1}{a}x, \\
  y& = & c^2-\frac{l_2}{b}x,\\
   y& =& b^2(al_1-1)+\bigg[\frac{l_1}{a} - \frac{3bc}{a(a^2+b^2)}\bigg](x-a^2b^2)
  \eeastar
  and that they meet at one point
  $$
  \left(\frac{abc}{3},\frac{bcl_1}{3}\right).
  $$
  \end{proof}

\subsection{Normalization of the polytope}

  When we choose $\chi_i$'s and $\lambda_j$'s in \eqref{eq:tilde-omega},
  we require the (volume) normalization condition in addition so that
  $$
  \int_{\CP^2_{a,b,c}} \widetilde{\omega}^2 = \int _{\CP^2} \omega_{\text{\rm FS}}^2.
  $$
  Such a choice can be always made by suitably choosing the functions $\lambda_i$,
  and then Moser's deformation trick produces a diffeomorphism
  between the two symplectic forms $\widetilde \omega$ and $\omega_{\text{\rm FS}}$
  $$
  \alpha:(\CP^2, \omega_{\text{\rm FS}}) \to (\CP^2_{a,b,c}, \widetilde{\omega})
  $$
  such that $\alpha^*\widetilde{\omega} = \omega_{\text{\rm FS}}$. We fix such a symplectic
  (actually K\"ahler) form $\widetilde \omega$.
  (See \cite{Vi13} for more detailed explanation.)

To apply the above mentioned Moser's deformation
to the pair of forms $\alpha^*\widetilde \omega$ and $\omega_{\text{\rm FS}}$, we need to suitably normalize
the size of the polytope $P(a^2,b^2,c^2)$ so that the cohomology classes
$[\alpha^*(\widetilde \omega)],\,  [\omega_{\text{\rm FS}}] \in H^2(\CP^2,\R)$ should be the same.
Since $\dim_\R H^2(\CP^2,\R) = 1$, we know $[\alpha^*(\widetilde \omega)] = C [\omega_{\text{\rm FS}}]$
for some positive constant $C > 0$.

We now determine what this $C$ is. We first recall that monotonicity constants of
all monotone Lagrangian tori in $\CP^2$ are the same. For example, the Maslov index $2$ discs
of $T_{a,b,c}$ in $\CP^2$ have the same symplectic areas independent of $a, \, b,\, c$.

On the other hand, by the classification theorem from \cite{CO06},
  there exist three obviously seen Maslov index 2 holomorphic discs attached to $T(a^2,b^2,c^2)$
  which are associated to the facets of the polytope $P$.
  Denote these holomorphic discs by $D(\mu_i)$ for $i=1,2,3$ corresponding to the $i$-th facet of $P.$

  The following fact is stated in \cite[Paragraph after Prop. 2.4]{Vi14} without proof.
  Because this is an important element in our study of the present paper,
  we give its proof for readers' convenience.

  \begin{prop}\label{prop:scaling} Consider the polytope $P = P(a^2,b^2,c^2)$ described around \eqref{eq:P}.
  If we scale $P$ by dividing by $abc$, then the $\alpha^*(\widetilde \omega)$-symplectic areas of Maslov index 2 holomorphic discs
  attached to $T_{a,b,c}$ are the same as that of $\omega_{\text{\rm FS}}$. In particular, $[\alpha^*(\widetilde \omega)] = [\omega_{\text{\rm FS}}]$
  for all $a, \, b,\, c$.
  \end{prop}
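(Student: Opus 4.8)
The plan is to compute the symplectic area of one of the Maslov index $2$ holomorphic discs $D(\mu_i)$ attached to $T(a^2,b^2,c^2)$ directly from the labeled polytope data, compare it with the known (monotone) value for $\omega_{\text{\rm FS}}$, and read off the scaling constant. The key tool is the standard fact from toric (orbifold) Lagrangian Floer theory \cite{CO06} that for a toric fiber sitting over a point $p$ in the interior of the moment polytope $P$, the symplectic area of the basic disc $D(\mu_i)$ emanating in the direction of the $i$-th facet equals $2\pi$ times the affine distance $\langle p, m_i\mu_i\rangle - \lambda_i = L_i(p)$ from $p$ to that facet (with our normalization of $2\pi$ rather than $1$; the factor is fixed once and for all by a single example). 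By Lemma \ref{lem:labels} all labels are $m_i = 1$, and by Lemma \ref{lem:vianna-tori-location} the torus $T(a^2,b^2,c^2)$ sits over the labeled barycenter $p_0 = \left(\frac{abc}{3}, \frac{bcl_1}{3}\right)$, so all three quantities $L_i(p_0)$ are computable in closed form from the explicit inequalities $L_1, L_2, L_3$ defining $P = P(a^2,b^2,c^2)$.

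Concretely, I would substitute $p_0$ into $L_3(x,y) = x$ to get $L_3(p_0) = \frac{abc}{3}$, and substitute into $L_2(x,y) = -(al_1-1)x + a^2 y$ to get $L_2(p_0) = -(al_1-1)\frac{abc}{3} + a^2\frac{bcl_1}{3} = \frac{abc}{3}$ after using $a^2 l_1 - a(al_1-1) = a$; similarly $L_1(p_0) = \frac{abc}{3}$ after invoking the balancing relation \eqref{eq:al2+bl1=3c}. (These three evaluations being equal is exactly the statement that $p_0$ is the \emph{labeled barycenter} — the point equidistant in the affine sense from all three facets — so this is really a consistency check rather than a surprise, and it is the content of Lemma \ref{lem:vianna-tori-location}.) Hence, in the un-normalized polytope $P(a^2,b^2,c^2)$, each of the three Maslov index $2$ discs $D(\mu_i)$ has $\widetilde\omega$-area equal to $\frac{2\pi}{3}\,abc$ up to the universal normalization constant; dividing the polytope by $abc$ rescales each of these areas to $\frac{2\pi}{3}$, which is precisely the common area of Maslov index $2$ discs of a monotone Lagrangian torus in $(\CP^2, \omega_{\text{\rm FS}})$ with the complex line having area $2\pi$. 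Since the sum of the three disc classes is $[\CP^1]$ (the three basic discs glue to a line), matching the disc areas matches $[\alpha^*\widetilde\omega]$ with $[\omega_{\text{\rm FS}}]$ on a generator of $H^2(\CP^2;\R)$, giving $C = 1$ and hence $[\alpha^*(\widetilde\omega)] = [\omega_{\text{\rm FS}}]$ for all $a, b, c$.

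The main obstacle is not the barycentric arithmetic — that is routine — but making rigorous the claim that the area formula "area of $D(\mu_i) = 2\pi\, L_i(p_0)$" survives the rational blow-down. The discs $D(\mu_i)$ are a priori holomorphic discs for the \emph{orbifold} $\CP(a^2,b^2,c^2)$ with its form $\omega_P$, whereas $T_{a,b,c}$ lives in the blown-down $\CP^2_{a,b,c}$ with the interpolated form $\widetilde\omega$ of \eqref{eq:tilde-omega}. One has to check that the blow-down is supported in small neighborhoods $N_{a^2}, N_{b^2}, N_{c^2}$ of the orbifold points, which are disjoint from the basic discs once the barycenter is far enough inside $P$, and that consequently $\widetilde\omega$ agrees with $\omega_P$ on a neighborhood of each $D(\mu_i)$, so the $\widetilde\omega$-area and the $\omega_P$-area coincide; this is where the $(1 - \chi_a - \chi_b - \chi_c)$ cut-off in \eqref{eq:tilde-omega} does the work. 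I would also need to observe that the $\omega_P$-area of $D(\mu_i)$ is insensitive to the choices of $\chi$'s and $\lambda$'s, because it is a topological (cohomological) pairing once the disc boundary is fixed on $T_{a,b,c}$. Granting these points — all of which are implicit in \cite{Vi13, Vi14, CO06} — the proposition follows.
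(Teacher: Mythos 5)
Your proposal is correct and follows essentially the same route as the paper: the Cho--Oh area formula $2\pi\,L_i(A)$ from \cite{CO06}, evaluated at the labeled barycenter $\left(\frac{abc}{3},\frac{bcl_1}{3}\right)$ of Lemma \ref{lem:vianna-tori-location} with the relation \eqref{eq:al2+bl1=3c}, giving area $\frac{2\pi}{3}abc$ for each basic disc and hence $\frac{2\pi}{3}$ after dividing by $abc$. Your closing discussion of why the orbifold computation of disc areas with respect to $\omega_P$ transfers to $\widetilde\omega$ on the rational blow-down (the cut-offs in \eqref{eq:tilde-omega} being supported near the orbifold points, away from the basic discs) makes explicit a step the paper leaves implicit, but it is not a different argument.
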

  \begin{proof}
  We start with the following area formula for holomorphic disc of Maslov index 2 from \cite{CO06}
  for general toric manifolds.

  \begin{lem}[Theorem 8.1 \cite{CO06}]
  Let $\omega_X$ be the toric symplectic form, which is the reduced form
  of the standard symplectic form $\omega_0$ on $\C^N$ under the linear sigma model construction.
  Consider the residual $T^n$ action on $X$ and its moment map $\pi: X \to {\frak t}^* \cong \R^n$.
  Let $L = \pi^{-1}(A)$ be the fiber torus based at $A \in \Int P$. Then
  the symplectic area of the holomorphic disc corresponding to the i-th facet is given by
  $
  2\pi(\langle A,\mu_i \rangle-\lambda_i).
  $
  \end{lem}

  Using this area formula, we compute
  \begin{eqnarray*}
  \text{area}\ D(\mu_1)&=& 2\pi (\langle A,\mu_1 \rangle + b^2 c^2) \\
  &=& 2\pi \left(-\frac{abc}{3}(bl_{2}-1)-\frac{bcl_{1}}{3}b^{2} + b^2 c^2\right) \\
  &=& \frac{2\pi}{3}bc\left(3bc-a(bl_{2}-1)-b^{2}l_{1}\right) \\
  &=& \frac{2\pi}{3}bc\left(3bc+a-b(bl_{1}+al_{2})\right)
  \end{eqnarray*}
  By \eqref{eq:al2+bl1=3c}, we compute
  \begin{eqnarray*}
  \text{area}\ D(\mu_1) = \frac{2\pi}{3}abc.
  \end{eqnarray*}
  Similarly, for the second facet,
  \begin{eqnarray*}
  \text{area}\ D(\mu_2)&=& 2\pi (\langle A,\mu_2 \rangle ) \\
  &=& 2\pi \left(-\frac{abc}{3}(al_{1}-1)+\frac{bcl_{1}}{3}a^{2} \right) \\
  &=& \frac{2\pi}{3}abc.
  \end{eqnarray*}
  For the third facet,
  \begin{eqnarray*}
  \text{area}\ D(\mu_3) = 2\pi (\langle A,\mu_3 \rangle ) = \frac{2\pi}{3}abc.
  \end{eqnarray*}
  (This explanation shows that every holomorphic disc has the same area
  as it should be because
  because Lagrangian tori $T(a^2,b^2,c^2)$ are monotone.)
  Therefore after if we scale the symplectic form $\widetilde\omega$ by $\frac{1}{abc}$,
  all holomorphic discs of Maslov index 2 has area $\frac{2\pi}{3}$ which is the same as
  that of Clifford torus of in $\CP^2$ with respect to $\omega_{\text{\rm FS}}$.
  Obviously this area is independent of the choice of Markov triple $(a,b,c)$.
  \end{proof}

\section{Lower bound for the relative Gromov area}
  Let $(M^{2n},\omega)$ be a symplectic manifold. We recall the definition of \emph{Gromov area}.
  Denote
  $$
  B^{2n}(d) =\{z \in \C^{n} : |z| \leq d \}
  $$
  for $d > 0$.

  \begin{defn}[Relative Gromov area] Let $L \subset M$ be a compact subset.
  Consider a symplectic embedding $e: B^{2n}(d)\to M \setminus L$. The \emph{relative Gromov area} is defined by
  $$
  c_G(M; L): = \sup_e\{ \pi d^2 \mid e:B^{2n}(d)\to M \setminus L \text{ is a symplectic embedding}\}
  $$
  \end{defn}

  We are interested in studying the behavior $c_G(\CP^2; T_{a,b,c})$ as $\max\{a, b, c\} \to \infty$.
  For this purpose, we first recall two methods of finding a symplectic balls in the context of
  toric manifolds.

  The first one is given by Karshon \cite{Ka94} who uses the shape of triangle and the other is
  given by  Mandini and Pabiniak \cite{MP18} who uses the shape of diamond in the
  moment polytope.

\subsection{Almost toric blowup and symplectic balls}
\label{subsec:almost-toric-blowup}

In this subsection, we follow the exposition given in \cite[Section~2.4]{Vi16a} on the almost
 toric blowup to which the readers are referred for details. See also
 \cite{Zu03,Sy03,SyLe10}.

In short, the picture below describes an almost toric blowup. The exceptional
 curve lies over the dashed line, consisting of one circle on each fibre
 that collapses as it approaches the edge and as it approaches the node.

 \begin{figure}[h!]

\begin{center}

\centerline{\includegraphics[scale=0.35]{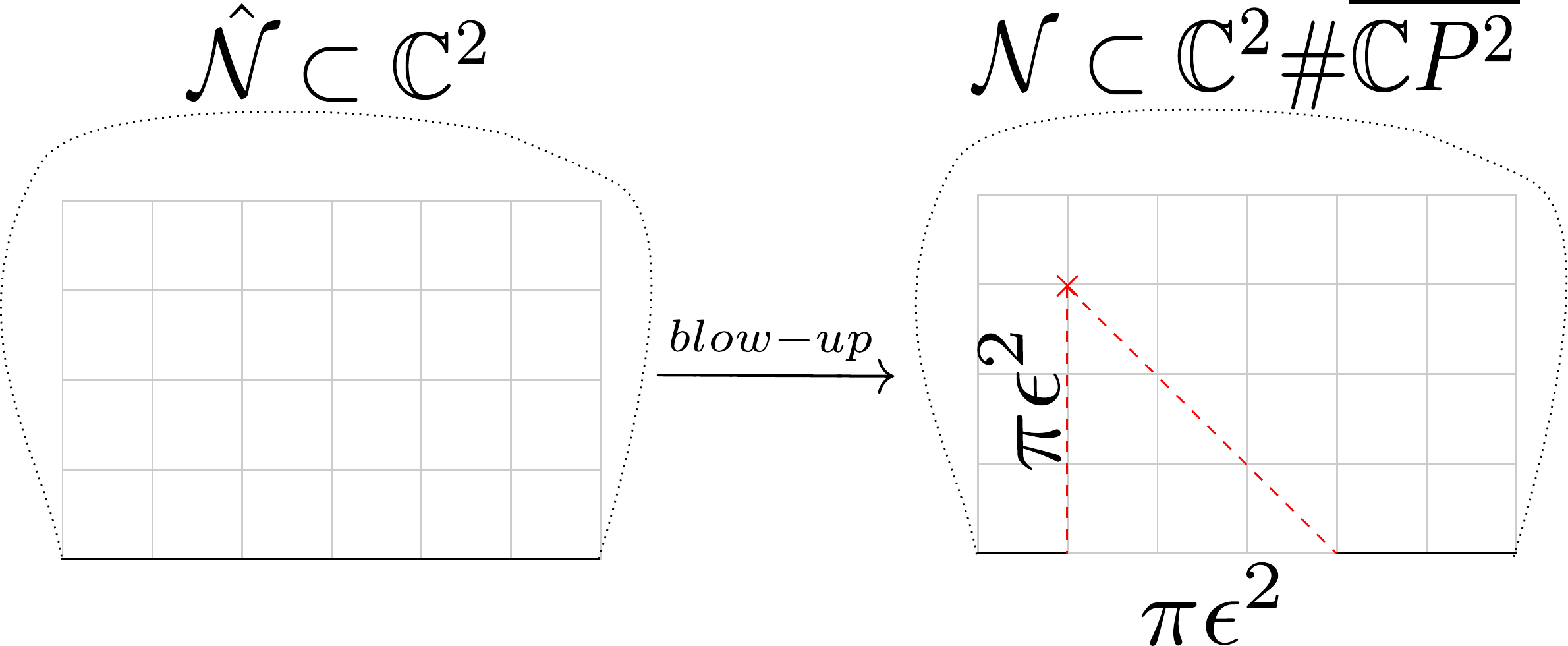}}

\caption{Almost toric blowup.}
\label{fig: ATBlup1}

\end{center}
\end{figure}

In particular, one must be able to find a symplectic ball of Gromov area $\pi
\epsilon^2$, in a neighbourhood of an affine triangle in an
ATF (almost toric fibration), corresponding to the missing triangle after
the almost toric blowup.

More precisely, suppose we see a triangle inside a toric region of an ATF, as
illustrated in Figure \ref{fig: Triang}. Viewing the preimage of any small
neighborhood of this triangle as a blowdown of the
corresponding neighborhood of the exceptional curve as in Figure \ref{fig:
ATBlup1}, we can then infer that
there is a symplectic ball of capacity $\pi \epsilon^2$ projecting
into the preimage of this neighborhood of the triangle.

\begin{figure}[h!]

\begin{center}

\centerline{\includegraphics[scale=0.35]{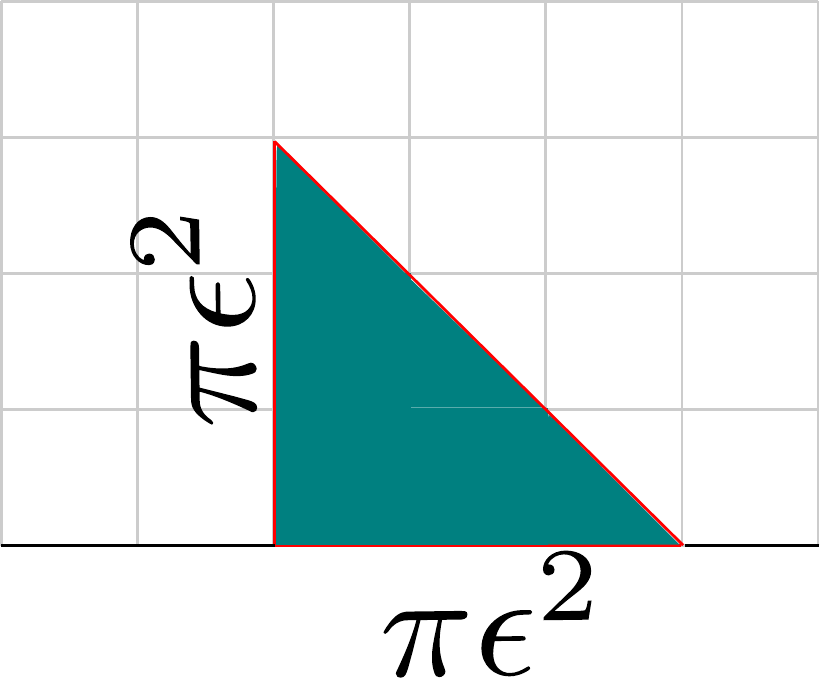}}

\caption{A symplectic ball $B$ of capacity smaller than $\pi \epsilon^2$
projects into the shaded triangle.}
\label{fig: Triang}

\end{center}
\end{figure}

So for our purpose, every time we encounter a triangle as in Figure \ref{fig:
Triang}, we know there is a symplectic ball $B$ of any given capacity smaller
than $\pi \epsilon^2$ that projects into the shaded triangle.

Just for visualization purpose, we describe how a symplectic ball $B$ centred at
a point lying over an edge of an ATF, and projecting into the triangle of Figure
\ref{fig: Triang}, should look like.

We first describe how its boundary $\del B \cong S^3$ intersect
each fibre of the triangle. To ensure that the topological sphere
described below is indeed the boundary of a symplectic four ball, one needs to make more
specific choices for these intersections. We omit these details,
since by the previous discussion, we do not really need them.

The boundary $\del B \cong S^3$
intersects the corresponding triangle in the toric sector of the ATF as follows:

 \begin{enumerate}[label=\alph*)]

  \item Consider the edge of the triangle corresponding to the
  intersection with edge of the ATF. The circle fibres,
  corresponding to the relative interior of the edge, intersect
  $\del B$ in two points. The circles fibering over the endpoints of
  the edges, intersect $\del B$ in one point each. Hence, $\del B$
  intersects the fibre over the edge in a circle.

  \item The tori living over the remaining edges of the triangle intersect $\del
  B$ in one circle. The class of this circle is the collapsing class
  associated to the edge. So the circles collapse to a point as we approach the
  edge, which is consistent with the previous item a).

  \item The tori living over the interior edge, intersect $\del B$ in two
  circles, also in the collapsing class associated to the edge. Hence, they
  collapse to two points as we approach the edge of the item a). If we approach
  the fibres of described in item b), these two circles collapse to the corresponding
  single circle.

 \end{enumerate}

In particular, consider a segment in the triangle, parallel to the edge of the
ATF, connecting points of the edge of the triangle. It divides the triangle in
two parts, the top part being a similar triangle. The intersection of $\del B$
with the fibres over this segment form a torus. The top part then intersects
$\del B$ in a solid
torus, where the family of tori living over the corresponding parallel edges
collapse to the circle living over the vertex. The bottom part also intersects
$\del B$ in a solid torus, now the corresponding parallel segments converge
to the edge of the intersecting the edge of the ATF, whose fibres intersect
$\del B$ in a circle, as in item a) above. It is easy to see that this circles
correspond to generators of $H_1$ of the torus living over our initial segment.
Hence, we have indeed $\del B \cong S^3$.

To get $B$, we consider $S^3$'s as above, projecting to smaller triangles
similar to each other, eventually collapsing to a point in the middle of the
edge of the starting triangle.

One is able to see that we can get balls of any capacity smaller than
$\pi \epsilon^2$, projecting inside our given triangle. In particular,
we can get a symplectic ball of capacity $\pi \epsilon^2$, if we get
this triangle inside a slightly bigger one in our ATF.

\subsection{Method by Mandini and Pabiniak \cite{MP18}}

We first recall a result by Mandini and Pabiniak \cite{MP18}. Following \cite{MP18},
  we consider the subset
  $$
  \Diamond(d)=\{ (x,y)\in \R^2 : |x|+|y| < \frac{d}{2} \}.
  $$

  \begin{prop} \emph{\cite{MP18}} \label{prop:DiamondToric}
  For each $\varepsilon > 0$ the 4-ball $B^4(\sqrt{2(d-\varepsilon)})$ of capacity $2\pi(d-\varepsilon)$ symplectically embeds into $\Diamond(d) \times (0,2\pi)^2 \subset \R^2 \times \R^2$.
  Therefore, if for a toric manifold $(M^4,\omega)$ with moment map $\phi$, $$\Psi(\Diamond(d))+x \subset \text{\rm Int}\, \phi(M)$$ for some $\Psi \in GL(2,\Z)$ and $x \in \R^2$, then the Gromov area of $(M^4,\omega)$ is at least $2 \pi d$.
  \end{prop}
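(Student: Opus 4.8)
The plan is to read the statement as two separate assertions — first, the \emph{model embedding} $B^{4}\big(\sqrt{2(d-\varepsilon)}\big)\hookrightarrow\Diamond(d)\times(0,2\pi)^{2}$, and second, the \emph{transfer} of this embedding into the toric manifold $M$ — and to treat them in that order. The transfer is routine. Over $\phi^{-1}(\Int\phi(M))$ the toric four–manifold is symplectomorphic, via action–angle coordinates, to $\Int\phi(M)\times\big(\R^{2}/2\pi\Z^{2}\big)$ with form $\sum dp_{i}\wedge dq_{i}$; any $\Psi\in GL(2,\Z)$ extends to a symplectomorphism of $\R^{2}\times(\R^{2}/2\pi\Z^{2})$ acting by $\Psi$ on the $p$–factor and by $(\Psi^{-1})^{T}$ on the torus factor (again unimodular, hence preserving the lattice $2\pi\Z^{2}$), and a translation $p\mapsto p+x$ is likewise symplectic. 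So the hypothesis $\Psi(\Diamond(d))+x\subset\Int\phi(M)$ yields a symplectic embedding $\Diamond(d)\times(0,2\pi)^{2}\hookrightarrow M$; precomposing with the model embedding gives, for every $\varepsilon>0$, a symplectic embedding $B^{4}\big(\sqrt{2(d-\varepsilon)}\big)\hookrightarrow M$, and since the closed ball of every capacity $<2\pi d$ sits inside one of these open balls, the Gromov area of $M$ is $\geq 2\pi d$.

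All the content is therefore in the model embedding, and the first thing to note is that it is \emph{volume–filling in the limit}: $\vol B^{4}\big(\sqrt{2(d-\varepsilon)}\big)=2\pi^{2}(d-\varepsilon)^{2}$ while $\vol\big(\Diamond(d)\times(0,2\pi)^{2}\big)=2\pi^{2}d^{2}$, so no ``soft'' argument can produce it — one needs an explicit symplectic folding. A useful negative observation that guides the construction: a fibre–preserving attempt is doomed. Writing the ball in action–angle coordinates $(I_{1},\varphi_{1},I_{2},\varphi_{2})$ with $I_{i}=|z_{i}|^{2}/2$, any change of the base coordinates $(I_{1},I_{2})$ forces a compensating change of the angle coordinates, and a linear base change keeps the angles inside $(0,2\pi)^{2}$ only when it is unimodular; but the vertices of $\Diamond(d)$ have lattice index $2$, so no $GL(2,\Z)$ move can carry the ball's moment triangle into $\Diamond(d)$. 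The embedding must genuinely mix base and fibre directions, which is exactly what a fold does.

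Concretely, I would construct the embedding by an explicit symplectic folding of the ball, in the spirit of Traynor's and Schlenk's folding constructions: the $2\pi$–periods of two of the angle coordinates supply the room into which the outer part of the ball is re-laid after being lifted and reflected, and the effect of the fold on the action variables is precisely to turn the moment triangle $\{I_{1},I_{2}\ge0,\ I_{1}+I_{2}\le d-\varepsilon\}$ into (a subset of) the cross–polytope $\Diamond(d)$. One then checks, piece by piece along the four facets of $\Diamond(d)$, that the image lands in $\{|p_{1}|+|p_{2}|<d/2\}\times(0,2\pi)^{2}$, verifies injectivity of the fold in the usual way, and lets the fold parameters tend to their extremal values as $\varepsilon\to0$. (Alternatively, one may first discard the two complex–line divisors of the ball — removable at arbitrarily small cost in capacity — and build a folded symplectomorphism of the open toric part $\{I_{1},I_{2}>0,\ I_{1}+I_{2}<d-\varepsilon\}\times(0,2\pi)^{2}$ onto an open subset of $\Diamond(d)\times(0,2\pi)^{2}$ directly.) The main obstacle is making the folding estimate sharp enough to reach capacity $2\pi(d-\varepsilon)$ for \emph{every} $\varepsilon>0$, i.e.\ to confirm that the cross–polytope target absorbs all of the folded volume with only an $O(\varepsilon)$ loss; this is where the particular diamond shape is used and where the bulk of the estimates live, whereas the action–angle reduction, the $GL(2,\Z)$ and translation bookkeeping, and the passage to $c_{G}(M)$ are all elementary.
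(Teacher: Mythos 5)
The second half of your argument — the passage from the model embedding to the toric manifold via action--angle coordinates, with $\Psi$ acting on the base and $(\Psi^{T})^{-1}$ on the angle torus, plus a translation in the actions — is correct and is exactly how the paper (and \cite{MP18}) handles that step. The issue is the first half, which, as you yourself say, is where all the content lies: you do not actually prove the embedding $B^{4}\bigl(\sqrt{2(d-\varepsilon)}\bigr)\hookrightarrow \Diamond(d)\times(0,2\pi)^{2}$, you outline a symplectic folding strategy and explicitly defer ``the bulk of the estimates'' — precisely the sharpness needed to reach capacity $2\pi(d-\varepsilon)$ for every $\varepsilon$ — to checks you never carry out. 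That is a genuine gap, and moreover the proposed mechanism is doubtful: a fold re-lays the outer part of the ball on a second ``floor'' inside the angle directions, so the two floors must share the period $2\pi$, which is why folding constructions generically sacrifice a definite fraction of the capacity; they are not the tool used anywhere for nearly volume-filling embeddings of a single ball, and nothing in your sketch indicates how the loss would be pushed down to $O(\varepsilon)$.

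The construction that actually proves the statement (this is \cite[Lemma 4.1]{LMS13}, invoked through \cite{MP18}, resting on Lemma 3.1.8 of Schlenk's book, as the paper points out) involves no folding and no mixing of the two complex factors. One takes, in each $\C$-factor separately, an area-preserving map $\sigma_{i}$ from the disk of area $2\pi d$ onto a rectangle, sending each concentric circle enclosing area $2\pi s$ to a smoothed rectangular loop enclosing the same area and lying in $\{|x_{i}|\le \tfrac{s}{2}+\varepsilon'\}\times(0,2\pi)$ (the loops are centered in the action direction). For $(z_{1},z_{2})$ in the ball one has $s_{1}+s_{2}<d-\varepsilon$, so the image under $\sigma_{1}\times\sigma_{2}$ satisfies $|x_{1}|+|x_{2}|<\tfrac{d}{2}$, i.e.\ it lands in $\Diamond(d)\times(0,2\pi)^{2}$; the sharp capacity is automatic rather than the outcome of delicate folding estimates. (As a side remark, your ``lattice index $2$'' heuristic for why a fibre-preserving map cannot work is not the real obstruction: a triangle of area close to $d^{2}/2$ simply cannot fit inside $\Diamond(d)$, whose largest inscribed triangle has area $d^{2}/4$ — no integrality is needed; and the correct conclusion is only that the action--angle structure must be distorted within each factor, not that the two factors must be folded into one another.)
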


Construction of such a symplectic embedding of a 4-ball $B^4(\sqrt{2(d-\varepsilon)})
  $ into $\Diamond(d) \times (0,2\pi)^2 \subset \R^2 \times \R^2$ is given in the proof of
  \cite[Lemma 4.1]{LMS13} \emph{using
  \cite[Lemma 3.1.8]{Sch_book} which is irrelevant to the toric structure of a symplectic manifold.}
  They use only an area-preserving map from a 2-ball of capacity $d$ to rectangle $R(d)=(0,d)\times(0,1)$, sending concentric circles to loops, rectangles with four corners smoothed such that the area enclosed by each smoothed rectangle in $R(d)$ is equal to the area enclosed by a concentric circle in the 2-ball, in $R(d)$.

  Let $(M^{4},\omega)$ be a toric symplectic manifold.
  If an affine transformation maps a diamond $\Diamond(d)$ into the interior of $\phi(M),$ then some subset of $\phi^{-1}(\Diamond(d))$ is symplectomorphic to $\Diamond(d) \times (0,2\pi)^{2}.$
  Here we use the identification $S^{1}=\R/2\pi\Z.$
  Then the above symplectic embedding of the diamond induces a symplectic embedding of $B^4(\sqrt{2(d-\varepsilon)})$ into $(M^{4},\omega).$

  Adopting the same idea in the almost toric case,
  not only for toric manifolds but also for almost toric manifolds, Proposition \ref{prop:DiamondToric} will hold
  the case with suitable modifications.

  \begin{prop} \label{prop:DiamondAlmostToric}
  Let $(M,\omega)$ be an almost toric manifold with almost toric fibration $\pi: (M,\omega) \to B.$
  If $\Psi(\Diamond(d))+x \subset \text{\rm Int}\, \pi(M) \setminus \{ \emph{nodes} \}$ for some $\Psi \in GL(2,\Z)$ and $x \in \R^2$, then the Gromov area of $(M^4,\omega)$ is at least $2 \pi d.$
  \end{prop}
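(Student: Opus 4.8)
The plan is to reduce Proposition \ref{prop:DiamondAlmostToric} to the toric statement Proposition \ref{prop:DiamondToric} by observing that the diamond $\Psi(\Diamond(d))+x$, being a compact set contained in $\operatorname{Int}\pi(M)\setminus\{\text{nodes}\}$, is disjoint from all the nodes and from the boundary. First I would note that the construction of the symplectic embedding of $B^4(\sqrt{2(d-\varepsilon)})$ into $\Diamond(d)\times(0,2\pi)^2$ recalled from \cite[Lemma 4.1]{LMS13} is purely a statement about the standard symplectic $\R^2\times\R^2$ and has nothing to do with the toric or almost toric structure; it only uses an area-preserving map of a $2$-ball into a rectangle. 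So the real content is: a neighborhood of the diamond in the base pulls back to a piece of $M$ symplectomorphic to $\Psi(\Diamond(d))\times(0,2\pi)^2$ (with the standard split form), and this region avoids the singular fibers.

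The key steps, in order: (1) Choose $\varepsilon>0$ small; shrink the diamond slightly to $\Psi(\Diamond(d-\varepsilon'))+x$ if needed, and take a closed neighborhood $N\subset\operatorname{Int}\pi(M)\setminus\{\text{nodes}\}$ of the (closed) diamond; by compactness $N$ exists. (2) Since $N$ is contractible, simply connected, and contains no nodes, the restriction $\pi^{-1}(N)\to N$ is a genuine Lagrangian torus bundle with no monodromy, and Arnold--Liouville (or rather the action-angle theorem for almost toric fibrations, as in \cite{Sy03}) gives action-angle coordinates on $\pi^{-1}(N)$ making it symplectomorphic to $N\times T^2$ with the form $dx_1\wedge d\theta_1+dx_2\wedge d\theta_2$; applying $\Psi^{-1}$ (an element of $GL(2,\Z)=\operatorname{Aut}(T^2)$ acting on action-angle coordinates) and translating, we get a symplectic embedding $\Diamond(d)\times(0,2\pi)^2\hookrightarrow\pi^{-1}(N)\subset M$. (3) Precompose with the ball embedding from Proposition \ref{prop:DiamondToric} (equivalently \cite[Lemma 4.1]{LMS13}) to obtain a symplectic embedding $B^4(\sqrt{2(d-\varepsilon)})\hookrightarrow M$, whence $c_G(M)\ge 2\pi(d-\varepsilon)$; letting $\varepsilon\to 0$ gives $c_G(M)\ge 2\pi d$.

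The main obstacle is step (2): making precise that over a simply connected node-free region of the base of an almost toric fibration one really does get a trivial action-angle chart. This is standard for regular Lagrangian fibrations, but one should be careful that the diamond, while avoiding nodes, could a priori touch the dashed branch cuts in the chosen base diagram; however the branch cuts are an artifact of the chosen integral affine embedding $\Phi$ and do not correspond to singular fibers, so crossing a cut only changes the trivialization by the monodromy matrix $A_{(k,l)}$ — and since our neighborhood $N$ is contractible we can simply choose a base diagram (or just work abstractly on $B$) in which $N$ is disjoint from all cuts, or absorb the matrix into $\Psi$. Once this is handled, the rest is formal, exactly mirroring the toric argument sketched after Proposition \ref{prop:DiamondToric}. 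I would also remark that the same proof shows more: any region of $\operatorname{Int}\pi(M)\setminus\{\text{nodes}\}$ that is the $GL(2,\Z)$-affine image of $\Diamond(d)$ yields the bound, and in the almost toric setting one has the extra flexibility of nodal slides to maneuver such diamonds into place.
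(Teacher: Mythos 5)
Your proposal is correct and follows essentially the same route as the paper: use action-angle coordinates over a node-free, simply connected neighborhood of the diamond to identify a subset of $\pi^{-1}(\Psi(\Diamond(d))+x)$ with $\Diamond(d)\times(0,2\pi)^2$, then invoke the ball embedding of \cite[Lemma 4.1]{LMS13} and let $\varepsilon\to 0$. Your extra care about branch cuts being artifacts of the base diagram (not singular fibers) is a point the paper leaves implicit, but it is the same argument.
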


  \begin{proof}
  A crucial ingredient in the toric case exploits the fact that each fiber over an interior point of its moment polytope is a 2-torus and we have $\phi^{-1} (\Diamond(d))\cong\Diamond(d) \times T^2$ symplectically via the action-angle variables.
  Let $\pi: (M,\omega) \to B$ be an almost toric fibration.
  In the almost toric base a smooth fiber over its interior point is a 2-torus away from
  the singular values of $\pi$.

  Assume that there is an affine transformation $\Phi$ of $\Diamond(d)$ mapping into $\text{\rm Int}\, \pi(M) \setminus \{ \text{nodes} \}.$
  Then some subset of $\pi^{-1}(\Phi(\Diamond(d))) \subset M \setminus \{ \text{\rm nodal fibers}\}$ is symplectomorphic to $\Diamond(d) \times (0,2\pi)^{2}.$
  Symplectic embedding of the 4-ball $B^4(\sqrt{2(d-\varepsilon)})$ of capacity $2\pi(d-\varepsilon)$ into $\Diamond(d) \times (0,2\pi)^2 \subset \R^2 \times \R^2$ in \cite{LMS13} completes the proof.
  \end{proof}

  \begin{rmk}
  Recall that both nodal trade and nodal slide operations induce two diffeomorphic smooth 4-manifolds with isotopic symplectic forms.
  Performing nodal slide of a node towards either the vertex or the barycenter of a base diagram along each eigenray allow us to find the size ``$d$'' of a diamond $\Diamond(d)$ while avoiding all nodes inside the base diagram. Since Gromov area is invariant under a symplectomorphism, combining these,
 we can find a maximal lower bound for the Gromov area of an almost toric manifold.
  \end{rmk}

\section{Symplectic balls in the complement of $T_{a,b,c}$ in $\CP^2$.}
\label{sec:balls}

By the discussion given in Subsection \ref{subsec:almost-toric-blowup},
in order to see a symplectic ball in the complement of $T_{a,b,c}$, it is enough
to identify a corresponding triangle with one side in the edge of the ATF base diagram, or a
diamond in the interior of the base avoiding $T_{a,b,c}$.

For the simplicity of the constants appearing in this section,
we scale the symplectic form so that the area of the Maslov index 2 disk
is $1$ or the area of the line is $3$.
Therefore, each side of the toric diagram has length 3, corresponding to the area of the
line and the area of the anti-canonical divisor is 9, since it has degree 3.
Therefore, the total area of the boundary of any ATF base diagram is 9. Moreover looking
at the orbifold limit the largest edge has length $\ge 3$.

We call a \emph{monotone triangle}, a triangle with one edge at the boundary of the
base diagram of the ATF, with all the affine lengths of the edges being $1$, and
the associated symplectic ball a \emph{monotone ball}.
Inside a neighbourhood of this triangle projects a ball of capacity one
that endows the monotone symplectic form in $\BlI$ after blown down.

 \begin{figure}[h!]

\begin{center}

\centerline{\includegraphics[scale=0.7]{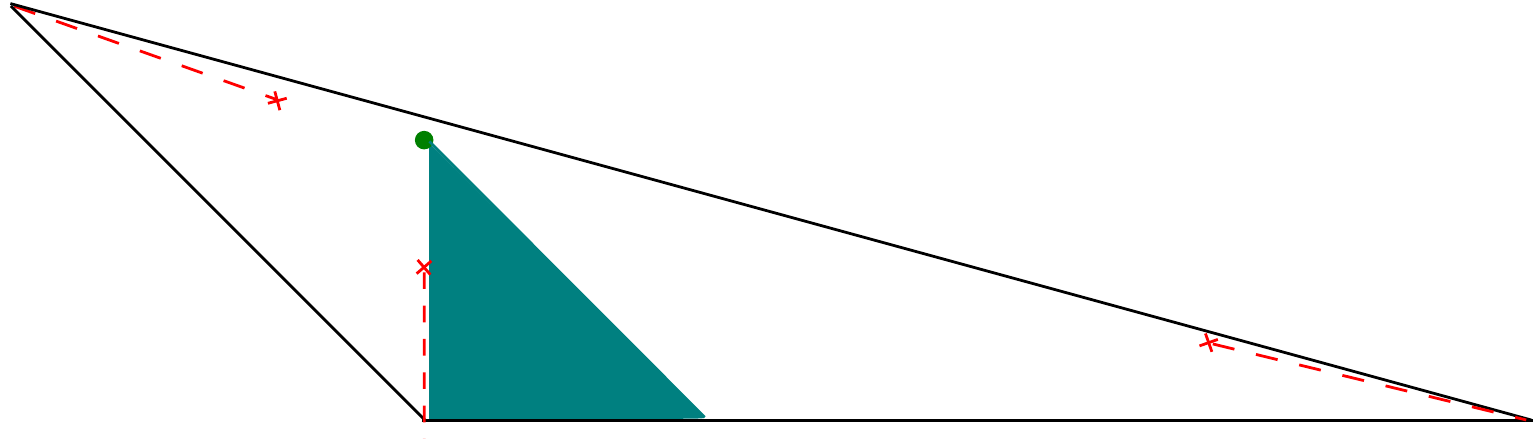}}

\caption{A symplectic ball in the complement of the monotone fibre in an ATF of $\CP^2$ projects
into a arbitrarily small neighbourhood of the monotone triangle (shaded).
}
\label{fig:abc_ATF}

\end{center}
\end{figure}

\begin{thm} Rescale the standard Fubini-Study form so that the area of the line is $2\pi$ so that
the Maslov index 2 disk has area $2\pi/3$. Then

 \[ \inf_{(a,b,c) \in \mathfrak M} c_G(\CP^2; T_{a,b,c}) \ge \frac{2\pi}{3}
  \]
\end{thm}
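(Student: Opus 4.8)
The plan is to exhibit, for every Markov triple $(a,b,c)$, a \emph{monotone triangle} inside the base diagram of the almost toric fibration $\CP^2_{a,b,c}$ of Section \ref{sec:balls} — that is, an affine triangle with one edge lying on the boundary of the base diagram and all three edges of affine length $1$ — which is disjoint from (a neighbourhood of) the barycentric fibre $T(a^2,b^2,c^2)$. By the discussion in Subsection \ref{subsec:almost-toric-blowup}, such a triangle gives a symplectic ball of capacity arbitrarily close to $\pi\epsilon^2$, where here $\epsilon$ is the affine size; with the normalization of this section (edge of the triangle has affine length $1$, corresponding to a Maslov index $2$ disc of area $1$), this yields a symplectic ball of capacity arbitrarily close to $1$ in $\CP^2\setminus T_{a,b,c}$. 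Rescaling so that the area of the line is $2\pi$ (equivalently the Maslov index $2$ disc has area $2\pi/3$, by Proposition \ref{prop:scaling}), the capacity becomes arbitrarily close to $2\pi/3$, and since $c_G$ is a supremum over embeddings, we conclude $c_G(\CP^2;T_{a,b,c})\ge 2\pi/3$. Taking the infimum over $(a,b,c)\in\mathfrak M$ is then immediate since the bound is uniform.

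The key geometric step is the existence of the monotone triangle. Here I would use that the base diagram of $\CP^2_{a,b,c}$ looks like the polytope $P=P(a^2,b^2,c^2)$ with a node and cut emanating from each vertex, and that the three eigenrays pass through the labeled barycenter $\bigl(\tfrac{abc}{3},\tfrac{bcl_1}{3}\bigr)$ where $T(a^2,b^2,c^2)$ sits (Lemma \ref{lem:vianna-tori-location}). Near each vertex the local model is the almost toric base $U_{k,l}$; performing a nodal slide to push the node toward the barycenter along its eigenray frees up, near the vertex, a genuine toric corner (two edges meeting with an $SL(2,\Z)$-basis of primitive edge vectors, since the label there is $1$ by Lemma \ref{lem:labels}). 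In such a corner one can always cut off an affine triangle of edge length $1$ with one edge on the boundary: this is exactly a "monotone triangle," and it reproduces the monotone blowup $\BlI$ locally, hence carries a ball of capacity $1$ (for the normalization of this section). The point that makes the size $1$ available — rather than something smaller — is precisely the content of Proposition \ref{prop:scaling}: after scaling $P$ by $1/(abc)$ the Maslov index $2$ disc areas are all $1$, so each edge of the (normalized) diagram has affine length $\ge 1$ (the largest $\ge 3$, the others at least comparable), leaving room for a unit triangle at a corner. One must check the triangle stays away from the barycenter; since the barycenter is at affine distance determined by the disc areas (each of the three barycentric disc areas equals $1$ in this normalization), a unit triangle hugging a vertex edge does not reach it.

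The main obstacle I expect is making the nodal-slide argument rigorous and uniform in $(a,b,c)$: one must verify that for \emph{every} Markov triple the cut directions $(a,l_1),(b,-l_2),(-(a^2+b^2),bl_2-al_1)$ and the node positions can be simultaneously arranged (by nodal slides, which preserve the symplectomorphism type by Symington's theorem) so that some corner of the base diagram retains an honest toric triangle of affine length $1$ disjoint from the barycentric fibre — and to see that the relevant edge of the ATF has affine length at least $1$ in this normalization. A clean way to package this is to invoke the explicit eigenray equations from the proof of Lemma \ref{lem:vianna-tori-location} together with the vertex coordinates $(0,0),(0,c^2),(a^2b^2,b^2(al_1-1))$, rescale by $abc$, and observe that along any boundary edge one can slide the two adjacent nodes inward past affine distance $1$ (possible because the barycenter is at affine distance $\ge 1$ from each edge, the disc areas being $1$), thereby exposing the desired monotone triangle; the picture in Figure \ref{fig:abc_ATF} shows exactly this configuration. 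Everything else is the routine translation between "affine triangle of size $1$ in an ATF base" and "symplectic ball of capacity $1$ in the complement of the generic fibre," already supplied by Subsection \ref{subsec:almost-toric-blowup}.
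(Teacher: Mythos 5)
Your overall reduction --- produce a monotone triangle (one edge on the boundary of the ATF base, all affine lengths $1$) disjoint from the monotone fibre, feed it to the almost toric blowup discussion of Subsection \ref{subsec:almost-toric-blowup}, and rescale so the line has area $2\pi$ --- is exactly the paper's strategy. But the step where you actually produce the triangle fails. You place it at a vertex of the base diagram, claiming that a nodal slide ``frees up a genuine toric corner'' whose primitive edge vectors form an $SL(2,\Z)$-basis ``since the label there is $1$ by Lemma \ref{lem:labels}''. Lemma \ref{lem:labels} concerns the labels of the \emph{facets}: it says the points over facet interiors are smooth. The vertices of $P(a^2,b^2,c^2)$ are the images of the orbifold points, with structure groups $\Z/a^2\Z$, $\Z/b^2\Z$, $\Z/c^2\Z$, so the primitive directions of the two adjacent edges do \emph{not} span $\Z^2$ (unless the corresponding weight is $1$); this is precisely why each vertex carries a cut in the ATF, and a nodal slide moves the node along its eigenline but never turns that corner into a Delzant corner nor removes the cut emanating from it. A unit triangle hugging such a corner would cross the cut, hence does not lie in a toric region, and the blowdown argument of Subsection \ref{subsec:almost-toric-blowup} does not apply to it as stated (passing triangles through cuts requires the monodromy bookkeeping done later in the paper and distorts the triangle).

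There is a second, quantitative gap: it is not true that ``each edge of the normalized diagram has affine length $\ge 1$''. With the line of area $3$ the edge lengths are proportional to $a^2:b^2:c^2$ with total $9$, i.e. they equal $3a/(bc)$, $3b/(ac)$, $3c/(ab)$; for large Markov triples the two shorter edges are arbitrarily short (for $(5,13,194)$ they are about $0.04$ and $0.006$), so there is no room for a unit triangle based on them. Only the longest edge is uniformly long: $c^2\ge (a^2+b^2+c^2)/3$ gives affine length $\ge 3$ (and Proposition \ref{prp:2/3} even gives $\ge 6$ for $c\ge 2$). The paper's proof uses exactly this: apply $SL(2,\Z)$ so that the longest edge is horizontal and a cut meeting it is vertical; the monotone fibre then sits at affine height $1$ over that edge (its Maslov index $2$ disc has area $1$), and since the edge has length $\ge 3$ one can place a neighbourhood of a monotone triangle with base on the interior of that edge, inside the toric region and away from the fibre, as in Figure \ref{fig:abc_ATF}. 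If you replace your corner construction by this placement along the longest edge, the remainder of your argument (capacity of the monotone ball, rescaling to $\frac{2\pi}{3}$, and uniformity of the bound over $\mathfrak{M}$) goes through.
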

\begin{proof}
Up to $SL(2,\Z)$, we can always take the largest edge of ATF base diagram associated to
each Markov triple $(a,b,c)$ to be horizontal, with one
of the cuts intersecting that edge vertical, as illustrated in Figure \ref{fig:abc_ATF}.
Hence the monotone fibre $T_{a,b,c}$ is at height 1. Therefore we can find
a equilateral right triangle so that one of the vertices thereof is put
right at the point.
Because the affine length of the horizontal edge is $ \ge 3$ (in
particular grater than 1), we can always find a neighbourhood of a monotone
triangle in the complement of the monotone fibre $T_{a,b,c}$. This finishes the proof.
\end{proof}

 Even though our construction indicates the lower bound given in the above theorem
 may be optimal, it is not clear whether it is indeed the case. In fact, as
 already mentioned in the introduction, the lower bound will be bigger for
 an individual torus, since we can get a neighbourhood of the monotone triangle
 -- see Figure \ref{fig:abc_ATF} again. In other words,
 $c_G(\CP^2; T_{a,b,c}) \ge \frac{2\pi}{3} + \varepsilon_{a,b,c}$, for some
 $\varepsilon_{a,b,c} > 0$.
As we mentioned in the introduction, Biran-Cornea showed $c_G(\CP^2;T_{text{\rm Cl}}) = \frac{4\pi}3$
for the Clifford torus $T_{\text{\rm Cl}} \cong T_{1,1,1}$ in \cite{BC09A}.

However we conjecture the above lower bound is indeed optimal.

 \begin{conj} \label{conj:Capacity}
   There is no monotone ball in the complement of $T_{a,b,c}$,
   and \[ \inf_{(a,b,c) \in \frak M} c_G(\CP^2; T_{a,b,c}) =
    \frac{2\pi}{3}\]

   with the convention of $\frac{2\pi}{3}$ being the capacity of the monotone
   ball.
 \end{conj}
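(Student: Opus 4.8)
The lower bound $\inf_{(a,b,c)\in\mathfrak M}c_G(\CP^2;T_{a,b,c})\ge\tfrac{2\pi}{3}$ is Theorem \ref{thm:lowerbound}, so the content of the conjecture is (a) the matching upper bound for the infimum and (b) the impossibility of embedding a closed ball of capacity exactly $\tfrac{2\pi}{3}$ (a \emph{monotone ball}) in the complement of an individual $T_{a,b,c}$. For the realizations of $T_{a,b,c}$ as the barycentric fibre of an almost toric fibration, (b) is already visible from Lemma \ref{lem:vianna-tori-location} and Figure \ref{fig:abc_ATF}: by the equal-area computation of Proposition \ref{prop:scaling} the labeled barycenter lies at affine distance exactly $1$ from each edge of $P(a^2,b^2,c^2)$, so every monotone triangle (all affine edge-lengths $1$, one edge on $\partial B$) has its apex precisely at the barycenter; by the discussion of Subsection \ref{subsec:almost-toric-blowup} a ball of the full capacity $\tfrac{2\pi}{3}$ requires a strictly larger similar triangle, which then contains the barycenter in its interior and whose preimage therefore meets $T_{a,b,c}$ -- and likewise for diamonds via Proposition \ref{prop:DiamondAlmostToric}. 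What is genuinely open is ruling out \emph{arbitrary} symplectic embeddings, not adapted to any almost toric fibration.

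To prove (b) in full, the plan is a pseudoholomorphic curve argument in the spirit of Biran--Cornea's computation of $c_G(\CP^2;T_{1,1,1})$. Suppose $e:\overline{B^4(r)}\hookrightarrow\CP^2\setminus T_{a,b,c}$ with $\pi r^2=\tfrac{2\pi}{3}$, and blow up its image to obtain a monotone $(\widetilde X,\widetilde\omega)\cong(\CP^2\#\overline{\CP^2},\widetilde\omega)$ with $\widetilde\omega(E)=\tfrac{2\pi}{3}$, $\widetilde\omega(H)=2\pi$, $\widetilde\omega(H-E)=\tfrac{4\pi}{3}$, in which $T_{a,b,c}$ survives in $\widetilde X\setminus E$ as a monotone Lagrangian torus with the same monotonicity constant and the same superpotential. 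For a generic $\widetilde\omega$-compatible $J$ making $E$ $J$-holomorphic, Gromov--Witten theory of $\CP^2\#\overline{\CP^2}$ supplies a $J$-holomorphic sphere in class $H-E$ through every point (the same input Biran--Cornea use). Now stretch the neck along $T_{a,b,c}$: the limiting holomorphic building splits into a bottom piece of Maslov-$\ge 2$ discs on $T_{a,b,c}$, each of area a positive multiple of $\tfrac{2\pi}{3}$, and a top piece in $\widetilde X\setminus T_{a,b,c}$ of total class $H-E$ which necessarily meets $E$. Comparing areas -- $\tfrac{4\pi}{3}$ against a sum of multiples of $\tfrac{2\pi}{3}$ plus the intersection with $E$ -- forces the building into the single borderline configuration of one Maslov-$2$ disc of the minimal area $\tfrac{2\pi}{3}$ glued to the line through the blown-up point; excluding exactly this configuration, via intersection positivity of the top component with $E$ or automatic transversality for the disc, is the heart of the matter.

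Given (b), the infimum statement reduces to producing Markov triples along which $c_G(\CP^2;T_{a,b,c})\to\tfrac{2\pi}{3}$. The plan is to take the Fibonacci branch $(1,1,1),(1,1,2),(1,2,5),(1,5,13),\dots$ of the Markov tree and, after normalizing the base diagram so that the longest edge is horizontal as in Figure \ref{fig:abc_ATF}, to show that along this sequence the two other edges close in on the barycenter: the eigenray slopes $l_1/a$ and $-l_2/b$ of Lemma \ref{lem:vianna-tori-location} tend to fixed values leaving only a shrinking neighbourhood of the monotone triangle free of both $T_{a,b,c}$ and the nodes, so that the excess $\varepsilon_{a,b,c}$ of the remark preceding the conjecture tends to $0$ by elementary affine geometry. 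Upgrading ``no room for excess'' to an actual upper bound $c_G\le\tfrac{2\pi}{3}+o(1)$ again runs through the blow-up and neck-stretching of the previous paragraph, now with the area budget genuinely forcing the contradiction once $\varepsilon_{a,b,c}$ is small.

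The main obstacle, in both halves, is that upper bounds on the relative Gromov area are not soft: the argument must control the borderline holomorphic building in which a single minimal-area Maslov-$2$ disc degenerates onto the exceptional divisor, and it must do so while the almost toric fibration carries nodal fibres that interact with the neck-stretch. One possible way around the transversality difficulty is to bypass neck-stretching entirely and argue inside the almost toric blow-up using Vianna's mutation and wall-crossing formulas for the superpotential of $T_{a,b,c}$; but converting a superpotential identity into a sharp capacity bound is itself far from routine, which is why the statement remains a conjecture.
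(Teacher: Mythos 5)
There is no proof of this statement in the paper to compare against: it is stated as Conjecture \ref{conj:Capacity}, and the only thing the paper actually establishes is the lower bound of Theorem \ref{thm:lowerbound} together with the heuristic remark that $c_G(\CP^2;T_{a,b,c})\ge \frac{2\pi}{3}+\varepsilon_{a,b,c}$ for each individual torus. Your proposal, by your own admission, does not close the gap either: ruling out arbitrary (non-ATF-adapted) symplectic embeddings, controlling the borderline holomorphic building in the neck-stretching/blow-up argument, and producing a branch of the Markov tree along which the capacity actually tends to $\frac{2\pi}{3}$ are all left as open steps, and these are precisely the content of the conjecture. So what you have written is a research plan, not a proof, and it should be presented as such.

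Moreover, the one step you do claim as ``already visible'' is not correct as stated. A monotone triangle based on the long edge of the ATF diagram has affine height $1$, but its apex can sit anywhere along the height-$1$ line; Lemma \ref{lem:vianna-tori-location} places $T_{a,b,c}$ over a single point of that line, so a monotone triangle (and, since the long edge has affine length at least $3$, even a slightly larger one) can be translated horizontally to avoid that point, and fibres over the cuts are genuine Lagrangian fibres, so crossing a cut is not by itself an obstruction. Hence the non-existence of a monotone ball in the complement is not a consequence of the barycenter location even for ATF-adapted balls; whatever forces it (if true) must be a global pseudoholomorphic-curve or wall-crossing argument of the kind you sketch in your second paragraph, which is exactly the part you have not carried out. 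Similarly, the claim that along the Fibonacci branch the excess $\varepsilon_{a,b,c}$ tends to $0$ ignores that the nodes may be slid along their eigenrays (nodal slides change the diagram but not the symplectomorphism type), so no fixed base diagram can by itself yield an upper bound on $c_G$; an upper bound is a hard invariant computation, as in Biran--Cornea's treatment of the Clifford torus, and nothing of that sort is supplied.
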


We note that the ball presented in this theorem intersects the `boundary divisor'
by construction. Denote by $E$ the preimage of the boundary of the base diagram.
So we consider $\CP^2 \setminus E$, the complement of $E$ (still endowed
with the finite volume form coming from $\CP^2$, i. e., without completing it).

 \begin{figure}[h!]

\begin{center}

\centerline{\includegraphics[scale=0.7]{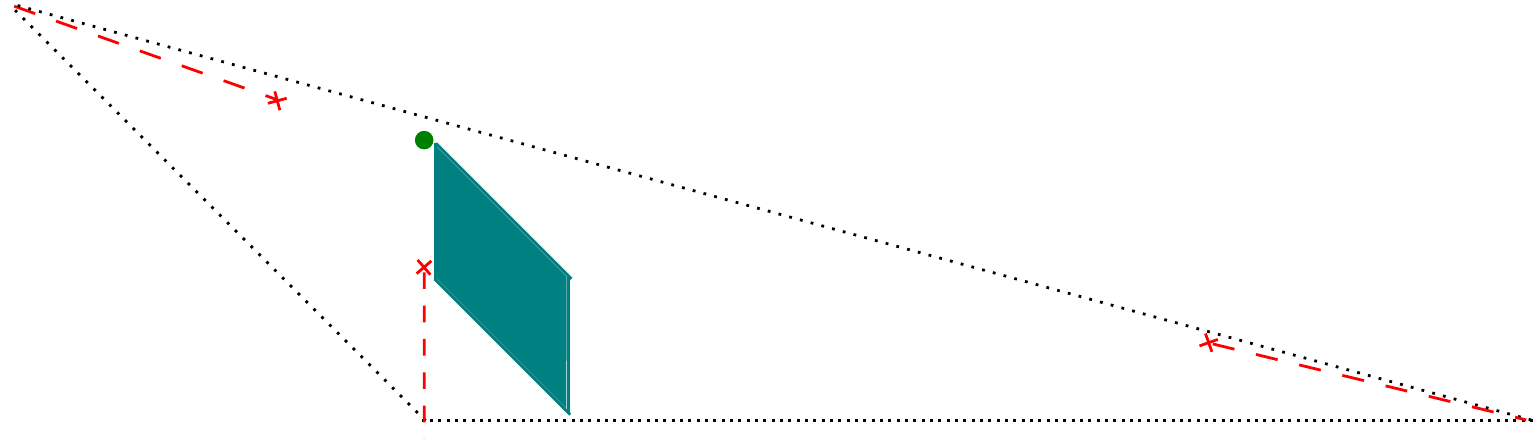}}

\caption{A ball
of capacity $\frac{\pi}{3}$ in the complement of the monotone fibre in an ATF of $\CP^2 \setminus E$ projects
into a arbitrarily small neighbourhood of the diamond $\Diamond(\sfrac{1}{6})$ (shaded).
}
\label{fig:Diamond_abc_ATF}

\end{center}
\end{figure}

Looking again back at Figure \ref{fig:abc_ATF}, we can indeed see
a diamond $\Diamond(\sfrac{1}{6})$ in the complement of $E$. A small neighbourhood
thereof contains a ball
of capacity $\frac{\pi}{3}$. (See Figure \ref{fig:Diamond_abc_ATF}.)
An application of Proposition \ref{prop:DiamondAlmostToric} gives rise to the following
initial estimate
\be\label{eq:prelim}
\inf_{(a,b,c) \in \frak M} c_G(\CP^2 \setminus E; T_{a,b,c}) \ge \frac{\pi}{3}.
\ee
We will give a better improved estimate later in Section \label{sec:CP2-E} after
a finer study of the base diagram associated to $(a,b,c)$.

\section{Geometry of the locus of the union of $T_{a,b,c}$}\label{sec:dense}

In this section, we construct a representative of the family $\{T_{a,b,c}\}$
of monotone Lagrangian tori such that the loci of the tori $T_{a,b,c}$ is not dense in $\CP^2$.

\subsection{Construction of a non-spreading family $\{T_{a,b,c}$\}}

We consider the configuration formed by the union of the Clifford torus and three Lagrangian disks,
as first exposed in \cite{STW16}, see also \cite{To17,PaTo17}.
We will construct a family such that all tori $T_{a,b,c}$ reside in an
arbitrarily small neighbourhood of the locus of this configuration.

We denote $\Sk$ for this configuration, which can also be thought as
a Lagrangian skeleton of the Liouville domain $M = \CP^2 \setminus E$.
This Lagrangian skeleton consists of the monotone Clifford torus $T_{\text{\rm Cl}}$ together
with three Lagrangian disks, with boundaries on $T_{\text{\rm Cl}}$.

To make our construction in perspective, we recall the notion of \emph{Lagrangian seeds}
from \cite{PaTo17}.

\begin{defn}[Definition 4.7 \cite{PaTo17}] A \emph{Lagrangian seed} $(L,\{D_i\})$ in a
symplectic 4-manifold $X$ consists of a monotone torus $L \subset X$, and a collection of
embedded Lagrangian disks $D_i \subset X$ with boundary on $L$, which satisfies the following
conditions. Here we denote $D_i^\circ = D_i \setminus \del D_i$.
\begin{enumerate}
\item each $D_i$ is attached to $L$ cleanly along its boundary, i.e., transversely in the
directions complementary to the tangent lines $T(\del D_i)$,
\item $D_i^\circ \cap L = \emptyset$,
\item $D_i^\circ \cap D_j^\circ = \emptyset, \, i \neq j$,
\item the curves $\del D_i \subset L$ have minimal pairwise intersections, i.e., there is a
diffeomorphism $L \to T^2$ taking each $\del D_i$ to a geodesic of the flat metric.
\end{enumerate}
\end{defn}

With this definition, the above mentioned configuration
$$
(T_{\text{\rm Cl}}, \{D_1, D_2, D_3\})
$$
as drawn in Figure \ref{fig:All_ATFs} is nothing but an example of Lagrangian seed.

The following is the precise statement on which we will be based for this inductive procedure starting from
$(1,1,1)$ to arbitrarily given $(a,b,c)$.

\begin{lem}[Compare with Lemma 4.16 \cite{PaTo17}] Denote $M = \CP^2 \setminus E$. Consider the Clifford torus $L_0$ and
a Lagrangian disk $D_0$ so that $(L_0,D_0) \subset M$ is a mutation configuration. Denote by
$\theta_M$ the Liouville one-form of the exact symplectic form $(\omega_{\text{\rm FS}})|_M$. Then
\begin{enumerate}
\item any neighborhood of $L_0 \cup D_0$ contains another mutation configuration $(L_1,D_1)$,
\item there is an arbitrarily small neighborhood $U_0$ of $L_0 \cup D_0$ such that
$(\theta_M)|_{U_0}$ is Liouville, and such that the completion of $U_0$ is isomorphic to
the completion of $M$.
\end{enumerate}
\end{lem}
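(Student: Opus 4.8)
The plan is to treat the two assertions in turn, extracting both from the local structure near a mutation configuration $(L_0,D_0)$ and from the fact that $M = \CP^2\setminus E$ carries an exact (Liouville) structure, with $E$ a smooth cubic (a Donaldson divisor) whose complement is a Weinstein domain whose skeleton is $\Sk = T_{\text{\rm Cl}}\cup D_1\cup D_2\cup D_3$.

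For part (1), the idea is to use the almost toric picture: a mutation configuration $(L_0,D_0)$ sits, up to Hamiltonian isotopy, as the barycentric fibre together with the vanishing (Lagrangian) thimble of one node in an almost toric fibration $\pi\colon M\to B$. Near such a fibre the fibration looks like the standard focus-focus model $\pi(x,y)=(x_1y_1+x_2y_2,\,x_1y_2-x_2y_1)$, and a nodal slide moves the node along its eigenray without changing the symplectomorphism type of the total space. Performing a nodal slide that pushes the node slightly toward the barycenter (or toward a vertex) produces a new almost toric fibration of the same $M$ in which the new barycentric fibre $L_1$ and its thimble $D_1$ are contained in an arbitrarily small neighbourhood of the old $L_0\cup D_0$; shrinking the slide parameter shrinks the neighbourhood. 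That the result is again a mutation configuration is immediate from the definition, since the model around a node, together with one branch cut, is precisely a mutation configuration. This is essentially the content of \cite[Lemma 4.16]{PaTo17}, transported to $M=\CP^2\setminus E$ via the identification of $\Sk$ with the Weinstein skeleton.

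For part (2), I would argue as follows. Since $(\omega_{\text{\rm FS}})|_M$ is exact with primitive $\theta_M$, the Liouville vector field $Z$ dual to $\theta_M$ is complete on the completion $\widehat M$, and $M$ retracts onto the compact Weinstein skeleton $\Sk=L_0\cup D_0$ (possibly after a Hamiltonian isotopy of the cubic $E$ so that the skeleton is exactly this clean configuration). Choose $U_0$ to be the sublevel set $\{\,\phi< \epsilon\,\}$ of a defining exhausting function $\phi$ adapted to $Z$ near $\Sk$; for all sufficiently small $\epsilon>0$ this is a neighbourhood of $\Sk=L_0\cup D_0$, and $(\theta_M)|_{U_0}$ is Liouville because $Z$ is still outward-pointing along $\partial U_0 = \{\phi=\epsilon\}$ (a contact-type hypersurface). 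Then the Liouville flow of $Z$ for a (possibly $\partial U_0$-dependent) time gives a symplectic identification of $\widehat M$ with the completion of $(U_0,(\theta_M)|_{U_0})$: flowing outward from $\partial U_0$ covers the end of $\widehat M$, and on the compact part the inclusion is the identity. Making $\epsilon$ small makes $U_0$ arbitrarily close to $\Sk$, giving the required arbitrarily small neighbourhood.

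The main obstacle is the matching of the two descriptions of the configuration, namely identifying the mutation configuration $(L_0,D_0)\subset M$ with the Weinstein skeleton $\Sk$ of $M=\CP^2\setminus E$ in a way compatible with \emph{both} the almost toric structure (needed for part (1)) and the Liouville structure (needed for part (2)). Concretely, one must check that the cubic $E$ can be placed (up to symplectomorphism of $\CP^2$) so that $\CP^2\setminus E$ is Weinstein with skeleton exactly $T_{\text{\rm Cl}}\cup D_1\cup D_2\cup D_3$, and that the nodal-slide operations of part (1) can be realized as Liouville-deformations fixing this $E$; this is where the input from \cite{STW16} and \cite{To17,PaTo17} on Lagrangian seeds in $\CP^2\setminus E$ does the essential work, and the rest is a routine Moser/Liouville-flow argument.
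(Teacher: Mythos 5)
Your part (1) is essentially the paper's own argument: the paper obtains the nearby mutated configuration by first sliding the nodes of the ATF very close to the monotone fibre, so that the whole mutation (passing a node through the barycentric fibre and transferring the cut) happens over a small disk in the base whose preimage lies inside the given neighbourhood of $L_0\cup D_0$ (see Figure \ref{fig:All_ATFs}); your nodal-slide description is the same idea, up to the wording that a slide by itself does not produce the mutated pair --- one must slide the node past the monotone fibre, and the smallness is arranged by confining this motion to a thin neighbourhood of the projection of $L_0\cup D_0$.

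The genuine gap is in part (2), in the step where you declare that $M=\CP^2\setminus E$ ``retracts onto the compact Weinstein skeleton $\Sk=L_0\cup D_0$ (possibly after a Hamiltonian isotopy of the cubic $E$).'' The skeleton of $\CP^2\setminus E$ is the Clifford torus together with \emph{three} Lagrangian disks, not one, and no isotopy of $E$ can change this: if $M$ retracted onto $L_0\cup D_0$ it would be homotopy equivalent to a torus with a single $2$-cell attached, whose Euler characteristic is $1$, whereas $\chi(\CP^2\setminus E)=\chi(\CP^2)-\chi(E)=3$. With this identification gone, your Liouville-flow argument breaks at exactly the point where it needs the skeleton of $M$ to be contained in $U_0$: backward Liouville trajectories of points lying over the rest of the skeleton never enter a small neighbourhood of $L_0\cup D_0$, so flowing outward from $\partial U_0$ does not sweep out the completion of $M$, and the asserted isomorphism $\widehat{U_0}\cong\widehat{M}$ does not follow. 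Indeed the same Euler-characteristic count shows that the completion of a regular neighbourhood of $L_0\cup D_0$ is not even homotopy equivalent to that of $\CP^2\setminus E$, so your strategy cannot be repaired by adjusting $E$; the completion statement has to be understood as in \cite{PaTo17} (the completion of a small Liouville neighbourhood of a mutation configuration as a standard local model, independent of the choices), or else your flow argument should be run for the full seed $T_{\text{\rm Cl}}\cup D_1\cup D_2\cup D_3$. Note also that the paper does not reprove (2): it defers to Lemma 4.16 of \cite{PaTo17} and only supplies the ATF explanation for (1), so your part (2) is a genuinely different route, but as written it proves a statement that is obstructed topologically.
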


 \begin{figure}[h!]

\begin{center}

\centerline{\includegraphics[scale=0.7]{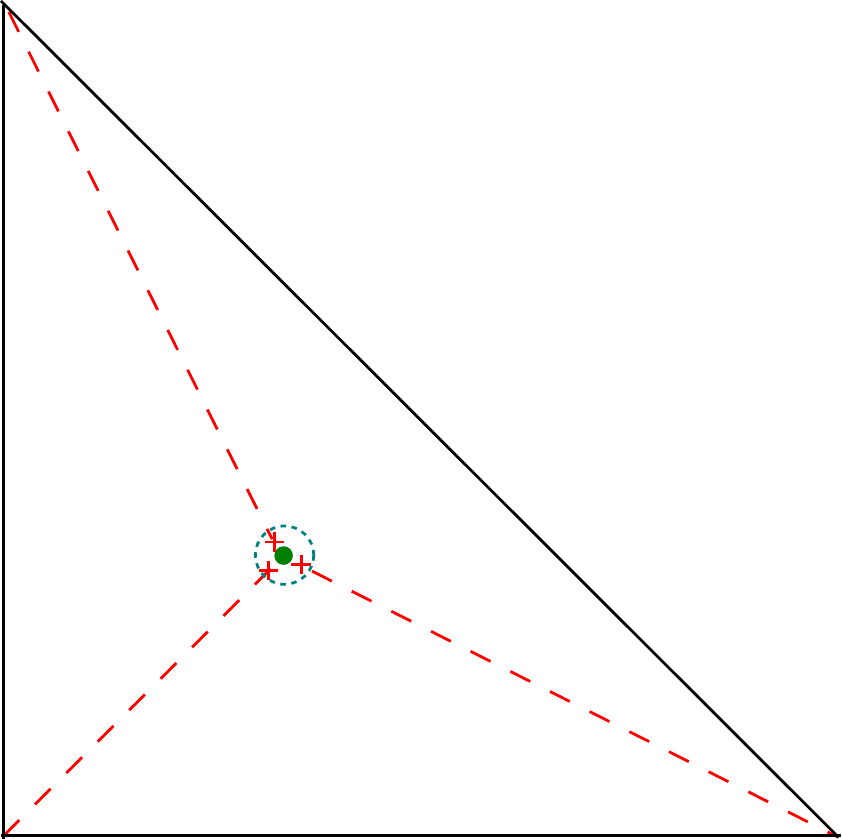}}

\caption{An ATF of $\CP^2$, with nodes very close to the monotone fibre. All mutations
can be made in $D$, the inside of the dashed circle, i. e., there are representatives of all
$T_{a,b,c}$ tori living inside $D$. The dashed circle can be infinitesimally small.
}
\label{fig:All_ATFs}

\end{center}
\end{figure}

The easiest way to see that we can make another mutation configuration
as close to the given on as we want is to slide all the nodes of an ATF very close to
the monotone fibre -- see Figure \ref{fig:All_ATFs}. Say that all the nodes
are now inside a small disk $D$ in the base. All mutations can then be
achieved by sliding the nodes inside $D$, so that the fibration remains
unchanged in the complement of $D$. In other words, all the monotone fibre
live in the pre-image of $D$ -- see Figure \ref{fig:All_ATFs}.

We would like to emphasize that this mutation operation is done in a way
that the ambient symplectic form, say, the Fubini-Study form unchanged.
In particular the union of all these tori is not dense in $\CP^2$
with respect to the Fubini-Study metric.
In fact, our construction shows that
the whole family $\{T_{a,b,c}\}$ can be put into an
open set of arbitrarily small volume by taking the above mentioned
neighborhood as small as we want.

\subsection{Ball packing in the complement of all these tori}

Inside the standard toric diagram of $\CP^2$, $\Sk$ projects into
the barycentre, union the three segments from the barycentre to the
vertices, illustrated as dashed lines in Figure \ref{fig: 9BallAll_ATFs}.

So, as Figure \ref{fig: 9BallAll_ATFs} also illustrates, we
can find 9 symplectic balls of the same size in the complement
of $\Sk$, hence in the complement of all $T_{a,b,c}$ Lagrangian tori,
for any capacity smaller than the capacity of the monotone ball.

 \begin{figure}[h!]
 \begin{center}

\centerline{\includegraphics[scale=0.7]{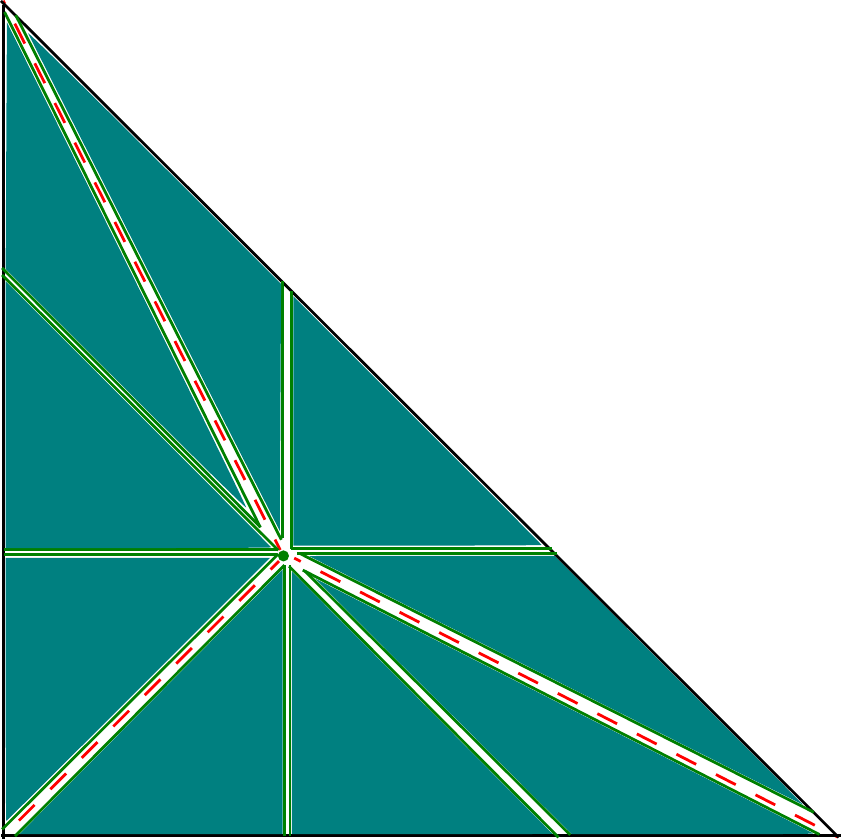}}

\caption{For any given capacity $c$ smaller than the capacity of the monotone ball,
we can find 9 balls in the complement of $\Sk$, and hence
in the complement of representatives of all
$T_{a,b,c}$ tori.}

\label{fig: 9BallAll_ATFs}

\end{center}
\end{figure}

This is the maximum we can get for volume reasons.

\section{Ball packing in the complement of individual torus $T_{a,b,c}$ }

We can easily see three monotone balls of the same size in the complement of the Clifford torus
in $\CP^2$, via the toric blowup.

In \cite[Section~7]{ChSch10}, Chekanov-Schlenk ask if one can embed the Chekanov
torus into the monotone $\BlIII$. The answer is yes and we can indeed
easily find two extra monotone balls in the complement
of the Chekanov torus inside $\BlII$ -- see Figure \ref{fig: Chek_Ball_CP2}.

 \begin{figure}[h!]
 \begin{center}

\centerline{\includegraphics[scale=0.7]{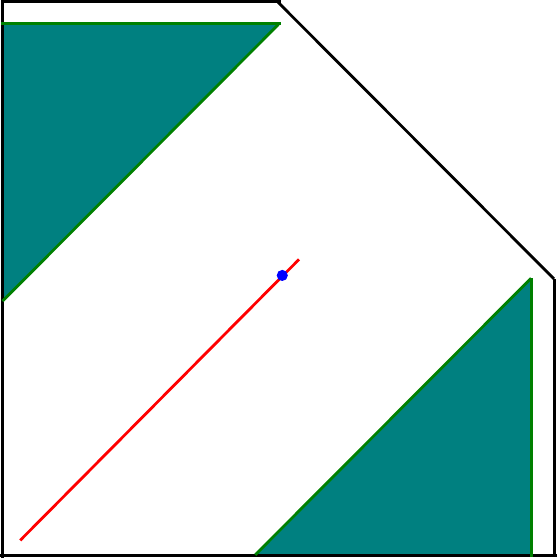}}

\caption{Two extra monotone balls in the complement of the Chekanov torus in $\BlII$.
The Chekanov torus projects over the red segment -- see \cite[Section~7]{ChSch10}.}

\label{fig: Chek_Ball_CP2}

\end{center}
\end{figure}

\begin{figure}[h!]
 \begin{center}

\centerline{\includegraphics[scale=0.7]{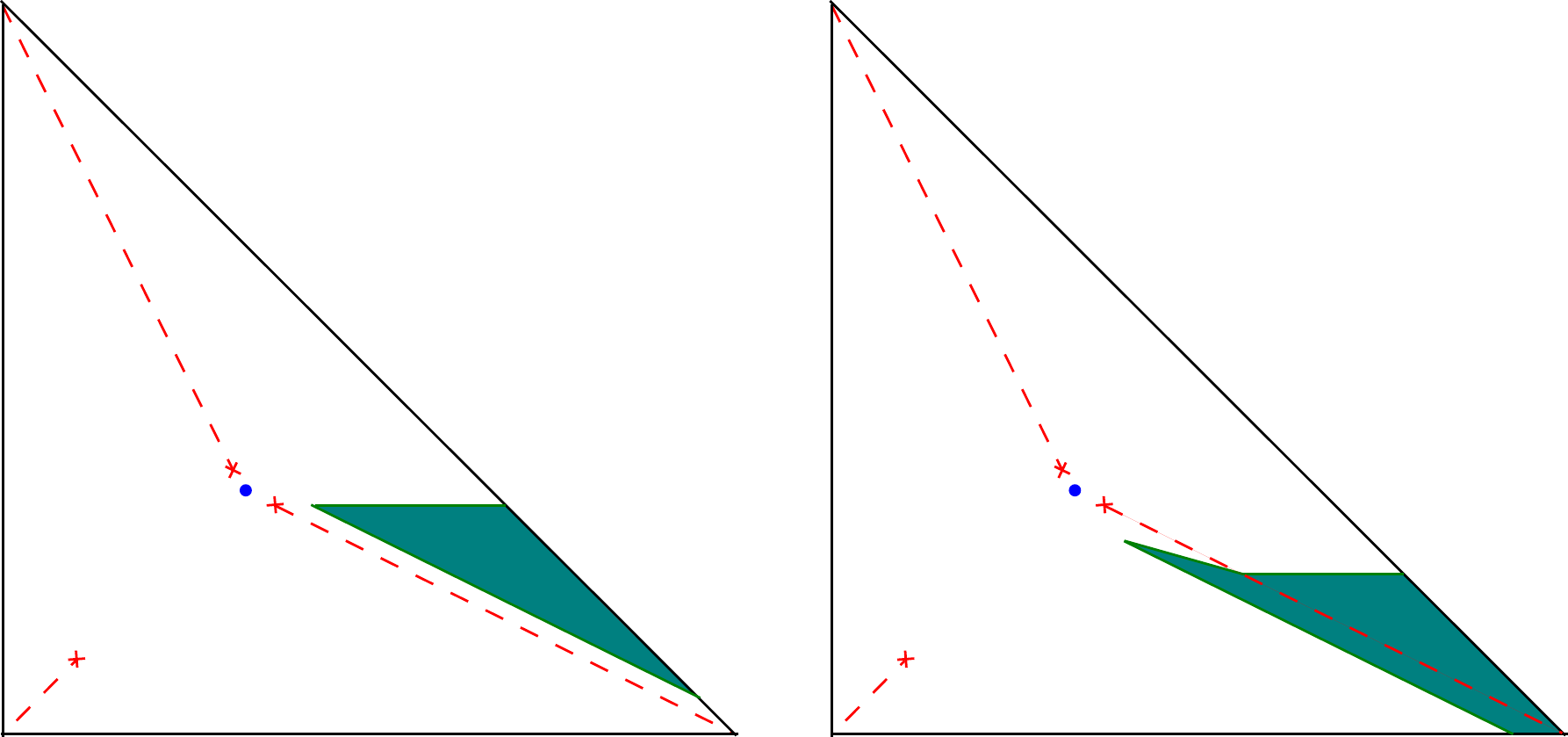}}

\caption{Two triangles of the same size in a ATF diagram of $\CP^2$. The second can be thought
to be obtained from the first by ``sliding it through the cut''. Note that one edge corresponds to
the eigen-direction of the cut, hence it is not distorted as it passes through.}

\label{fig:ExampBall_CP2}

\end{center}
\end{figure}

We recall that in an almost toric fibration, we do have the fibres over the
cuts, only the affine structure of the base diagram is not corresponding
with the standard affine structure of $\R^2$. In particular, we can
have a triangular region passing through the cut -- see Figure
\ref{fig:ExampBall_CP2}. The monodromy may distort the edges of the
triangle as it crosses the cut.

This allow us to get even better results for the ball packing.
Start with a configuration of 5 consecutive balls, similar to the ones in Figure \ref{fig:
9BallAll_ATFs}. Consider an ATF with two cuts very close to the monotone Cliford
torus. Slide this 5 balls through the cuts as illustrated in Figure
\ref{fig:ExampBall_CP2}. You can then ``inflate the triangles'', so
all of them become monotone triangles. You get a diagram as illustrated in
Figure \ref{fig: 5Cliff_Ball_CP2}.

 \begin{figure}[h!]
 \begin{center}

\centerline{\includegraphics[scale=0.7]{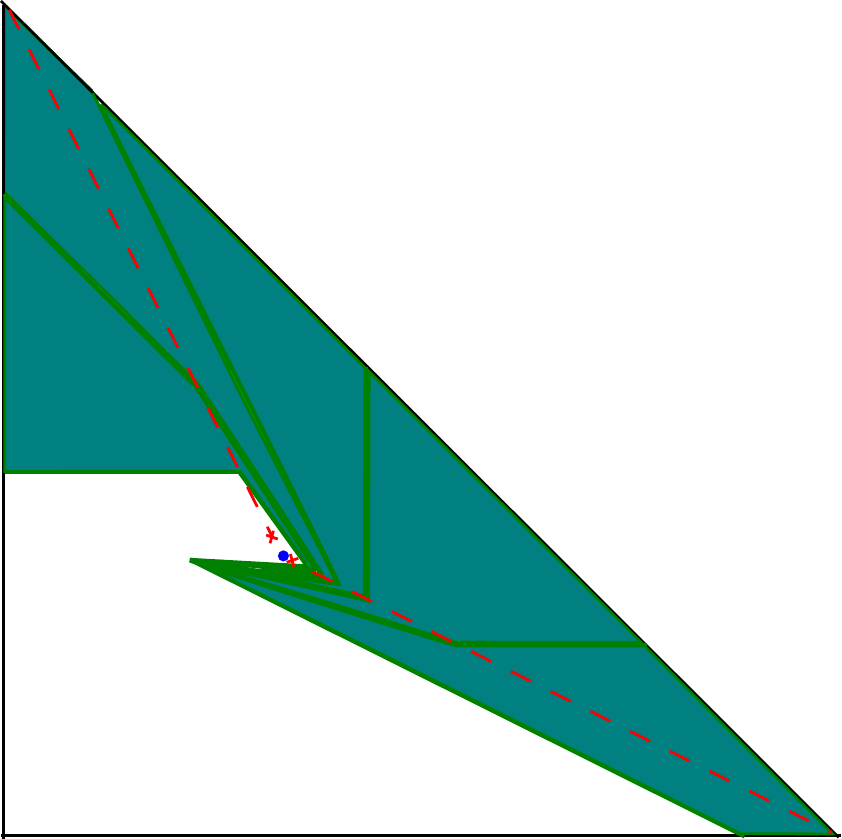}}

\caption{Five monotone balls in the complement of representatives of $T_{1,b,c}$
tori.}

\label{fig: 5Cliff_Ball_CP2}

\end{center}
\end{figure}

  \begin{figure}[h!]
 \begin{center}

\centerline{\includegraphics[scale=0.65]{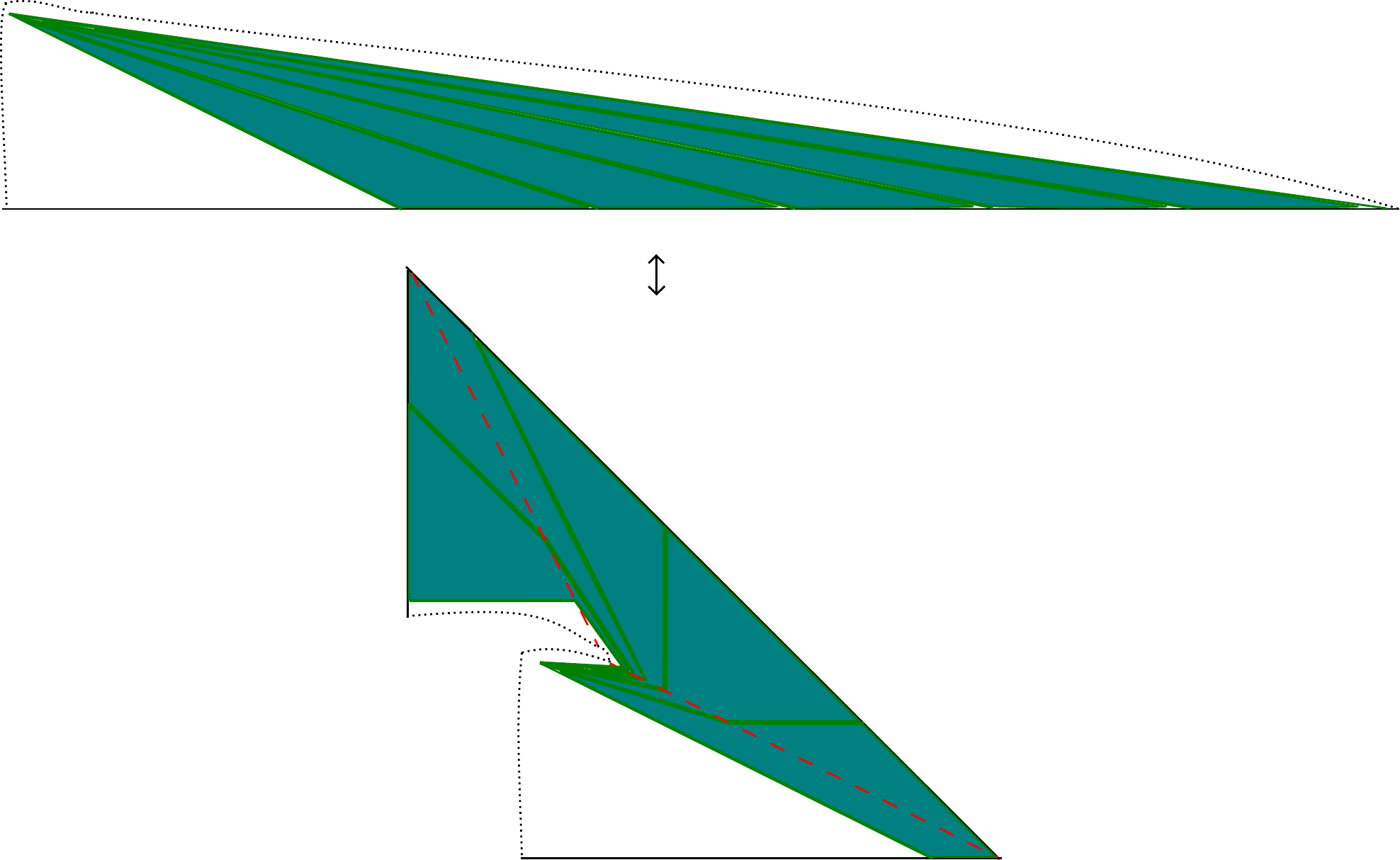}}

\caption{On the top, a configuration of five monotone balls. On the bottom, the
same configuration drawn after introducing two cuts.}

\label{fig: 5Ball_Config}

\end{center}
\end{figure}

{Of course, when we
want to embed the monotone balls, we need a tiny neighbourhood of the monotone
triangle. So all these triangles need to be spaced out by a tiny amount, which
is not drawn on Figure \ref{fig: 5Cliff_Ball_CP2} for visual purposes.
Figure \ref{fig: 5Ball_Config} illustrates how the balls look like
when we undo the monodromies associated with the cuts for better
understanding.

 These 5 monotone balls are indeed in the
complement of tori of the form $T_{1,b,c}$ $(1 + b^2 + c^2 = 3bc)$, all
together, in particular of both Clifford and Chekanov tori. This is because
all these tori are obtained by changing the ATF in the pre-image of a small
region containing the monotone fiber and the two singular fibres --
recall the analogous discussion given in Figure \ref{fig:All_ATFs}.

\begin{rmk}
We cannot use this trick in the ATF illustrated in Figure \ref{fig:All_ATFs},
 to get a monotone ball in the complement of $\Sk$ and, hence, of all tori $T_{a,b,c}$
 simultaneously.
  If you try to ``slide one triangle of Figure \ref{fig: 9BallAll_ATFs}
  through a cut'', with a triangle of size close to the monotone one,
  it will be forced to cross all the three cuts several times, in a spiral fashion,
  before eventually entering the dashed neighbourhood in Figure \ref{fig:All_ATFs}.
\end{rmk}

\vspace{2cm}

To proceed further, we need to make some computations regarding Markov triples $(a,b,c)$,

\[a^2 + b^2 + c^2 = 3abc. \]

The following is well-known

\begin{lem} \label{prp:a'}
  If $c \ge a$ the $a' = 3bc - a > c$.
\end{lem}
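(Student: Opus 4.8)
The plan is to exploit the Vieta/Markov structure of the equation $a^2+b^2+c^2=3abc$ directly. Fix a Markov triple $(a,b,c)$ with $c\ge a$, and view the equation as a quadratic in the first variable with $b,c$ held fixed: $x^2-3bc\,x+(b^2+c^2)=0$. One root is $x=a$; the other root is $a'=3bc-a$ (by Vieta's formula for the sum of roots), and the product of the roots is $a a'=b^2+c^2$. So I have the two handy identities $a'=3bc-a$ and $a a'=b^2+c^2$.

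From the product identity, $a'=\dfrac{b^2+c^2}{a}$. Now I would simply estimate: since $c\ge a>0$ and $b\ge 1$, we get $b^2+c^2\ge 1+c^2>c^2\ge ac$, hence $a'=\dfrac{b^2+c^2}{a}>\dfrac{ac}{a}=c$. (If one prefers to avoid the degenerate case $b=1$, note that even then $b^2+c^2=1+c^2>c^2\ge ac$ because $a\le c$, so the strict inequality $a'>c$ still holds; the only thing being used is $a\le c$ and $b\ge 1$.) This gives $a'>c$ as claimed. I should also record along the way that $a'$ is a positive integer forming a Markov triple with $b,c$ — that is immediate since $a'=3bc-a\in\Z$ and it is the second root of the integer monic quadratic above, so $(a',b,c)$ again satisfies $a'^2+b^2+c^2=3a'bc$ — though the statement as phrased only asks for the inequality.

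The argument is completely elementary; there is essentially no obstacle. The one point worth stating carefully is \emph{why} $3bc-a$ equals $(b^2+c^2)/a$, i.e., that $a$ and $3bc-a$ are precisely the two roots of $x^2-3bcx+(b^2+c^2)$, which follows at once from substituting and from Vieta. The strict inequality (rather than $\ge$) uses $b\ge 1$, which holds since Markov triples consist of positive integers. An alternative one-line route, avoiding the product identity, is: $a'-c=3bc-a-c=3bc-(a+c)$, and since $b\ge 1$ and $c\ge a$ one has $3bc\ge 3c>2c\ge a+c$, giving $a'-c>0$; I might include this as a remark since it is even more transparent.
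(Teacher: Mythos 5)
Your proof is correct, but your primary route differs from the paper's. You argue through the Vieta structure of the Markov equation: viewing $x^2-3bc\,x+(b^2+c^2)=0$, you use the product identity $a a'=b^2+c^2$ to get $a'=(b^2+c^2)/a>ac/a=c$, since $b^2+c^2>c^2\ge ac$ when $c\ge a$. The paper instead proves the inequality by a direct estimate that never invokes the Markov equation at all: from $c\ge a$ and $b\ge 1$ one has $bc\ge a$, hence $a'=3bc-a\ge 2bc\ge 2c>c$. Your one-line "alternative remark" ($a'-c=3bc-(a+c)$ and $3bc\ge 3c>2c\ge a+c$) is essentially the paper's argument in slightly different clothing. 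The trade-off: the paper's estimate is marginally more elementary and valid for arbitrary positive integers with $c\ge a$, while your Vieta route presupposes the Markov relation but yields the extra facts $a+a'=3bc$, $aa'=b^2+c^2$ and that $(a',b,c)$ is again a Markov triple — information the paper uses implicitly in its mutation discussion but does not need for this lemma.
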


\begin{proof}
  $c \ge a \imp bc \ge a \iff a' = 3bc - a \ge 2bc  \imp a' > c$ since $2b \ge 2$.
\end{proof}

 In particular, if $a\le b \le c$, then $a' = 3bc -a > c$ and $b' = 3ac - b > c$.

 \begin{lem}[Section 3.7 of \cite{KaNo98}] \label{prp:KaNo2}
 Two out of the
  three possible mutations of the Markov triple $(a,b,c)$ increase the sum $a + b + c$ and the other reduces it.
\end{lem}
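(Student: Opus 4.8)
The plan is to work directly with the sum function $\Sigma(a,b,c) = a+b+c$ and the three mutation involutions $m_1(a,b,c) = (3bc-a, b, c)$, $m_2(a,b,c) = (a, 3ac-b, c)$, $m_3(a,b,c) = (a, b, 3ab-c)$. By symmetry of the Markov equation in $a,b,c$ it suffices to understand when $m_3$ increases $\Sigma$ and when it decreases it, and then count. First I would observe that $m_3$ replaces $c$ by $c' = 3ab - c$, so $\Sigma(m_3(a,b,c)) - \Sigma(a,b,c) = c' - c = 3ab - 2c$. Thus $m_3$ increases the sum precisely when $3ab > 2c$ and decreases it when $3ab < 2c$ (equality is impossible on a genuine Markov triple with distinct entries, and for $(1,1,1)$ the situation is degenerate and handled separately since two of the mutations fix the triple). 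So the whole statement reduces to the combinatorial claim: for any Markov triple $(a,b,c)$ other than $(1,1,1)$, exactly one of the three inequalities $3bc < 2a$, $3ac < 2b$, $3ab < 2c$ holds.

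The key step is to pin down which coordinate is the one whose mutation decreases the sum. I would order the triple as $a \le b \le c$ (WLOG) and claim the decreasing mutation is $m_3$, i.e.\ the one on the largest entry. To see $3ab < 2c$: from the Markov equation, $c^2 = 3abc - a^2 - b^2 < 3abc$, so $c < 3ab$; this is not quite enough, so I would sharpen it. Write the Markov equation as a quadratic in $c$: $c^2 - 3abc + (a^2+b^2) = 0$, whose two roots are $c$ and $c^{\,\prime\prime} = 3ab - c$ (the other Markov neighbor in the $c$-direction), with $c \cdot c^{\,\prime\prime} = a^2 + b^2$ and $c + c^{\,\prime\prime} = 3ab$. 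Since $c$ is the largest entry, $c \ge b \ge a$, hence $c^{\,\prime\prime} = (a^2+b^2)/c \le (a^2 + b^2)/b \le 2b \le 2c$, and in fact $c^{\,\prime\prime} \le (a^2+b^2)/c$. Then $3ab = c + c^{\,\prime\prime}$, so $3ab - 2c = c^{\,\prime\prime} - c \le 0$, and it is strictly negative unless $c^{\,\prime\prime} = c$, i.e.\ $c^2 = a^2 + b^2$ with $c = b$, forcing $a = 0$ — impossible. Hence $3ab < 2c$, so $m_3$ strictly decreases $\Sigma$. Symmetrically, Lemma~\ref{prp:a'} (or the same root trick applied to $a$ and to $b$) shows $3bc - a > c \ge a$ and $3ac - b > c \ge b$, so both $m_1$ and $m_2$ strictly increase $\Sigma$. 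That accounts for all three mutations: two up, one down.

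The main obstacle I anticipate is the bookkeeping at the degenerate triple $(1,1,1)$ and, more subtly, making sure the ``exactly one decreases'' dichotomy is sharp rather than merely ``at most one'' — i.e.\ ruling out the possibility that two mutations tie or that the decreasing one is ambiguous when two entries are equal (as in $(1,1,2)$, where $b = a$). For $(1,1,1)$ one notes $m_1$ and $m_2$ act as the identity while $m_3(1,1,1) = (1,1,2)$ increases the sum, so the statement as phrased needs the mild caveat that we start past the root; for $(1,1,2)$, one checks $m_3(1,1,2) = (1,1,1)$ decreases while $m_1(1,1,2) = (5,1,2)$ and $m_2(1,1,2) = (1,5,2)$ both increase, consistent with the general argument. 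Once the quadratic-roots identity $c + c^{\,\prime\prime} = 3ab$, $c\, c^{\,\prime\prime} = a^2+b^2$ is in hand, everything else is a short comparison of sizes, so I expect the write-up to be only a few lines beyond what is sketched here.
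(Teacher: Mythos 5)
Your framework (reduce to the sign of $3ab-2c$, and use Vieta's relations for the quadratic $f(t)=t^2-3abt+(a^2+b^2)$ whose roots are $c$ and $c''=3ab-c$) is a sound route — the paper itself offers no proof, quoting Section 3.7 of \cite{KaNo98} — and the half of your argument showing that mutating either of the two smaller entries increases the sum is correct via Lemma \ref{prp:a'}. But the decisive step has a genuine gap: from the chain $c''=(a^2+b^2)/c\le (a^2+b^2)/b\le 2b\le 2c$ you conclude ``$3ab-2c=c''-c\le 0$'', i.e.\ $c''\le c$. What you actually established is only $c''\le 2c$ (or $c''\le 2b$), which says nothing about the sign of $c''-c$. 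The inequality $c''\le c$ is equivalent to $a^2+b^2\le c^2$, which fails for $(1,1,1)$ and is not a consequence of $c\ge b\ge a$ alone; it is essentially the content of the lemma, so it cannot be waved through. (Your strictness clause ``$c''=c$, i.e.\ $c^2=a^2+b^2$ with $c=b$, forcing $a=0$'' is also garbled, and your remark that $m_1,m_2$ fix $(1,1,1)$ is false: all three mutations of $(1,1,1)$ yield $(1,1,2)$ up to order, so all three increase the sum and $(1,1,1)$ is a genuine exception to the statement.)

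The repair stays entirely inside your setup: evaluate $f$ at $t=b$. Since $a\ge 1$ and $b\ge a$, one gets $f(b)=a^2-(3a-2)b^2\le a^2-b^2\le 0$, so $b$ lies weakly between the two roots $c''$ and $c$. If the triple is not $(1,1,1)$ then $b<c$ (the case $b=c$ forces $a^2=(3a-2)c^2$, hence $a=c=1$), and therefore $c''\le b<c$; this gives the strict decrease $3ab-2c=c''-c<0$, and in fact the sharper classical fact $c''\le b$ mentioned in the paper right after the lemma. Two cautions: you cannot instead borrow Proposition \ref{prp:2/3} (the bound $2ab\le c$), because its induction over the Markov tree already invokes Lemma \ref{prp:KaNo2}, so that patch would be circular; and if you prefer to rule out the tie $c''=c$ directly, note it would force $2c=3ab$ and $c^2=a^2+b^2$, i.e.\ $9a^2b^2=4(a^2+b^2)$, which has no positive integer solutions.
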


(In fact,  we can indeed deduce from this that if $a \le b < c$ then  $c' \le b$, but we won't need
that.)

 \begin{prop} \label{prp:2/3}
Suppose  $c > b \ge a$. Then
 \be\label{eq:ge2/3}
 \frac{c^2}{a^2 + b^2 + c^2} \ge \frac{2}{3}.
 \ee
 \end{prop}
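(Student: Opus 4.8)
The plan is to reduce inequality \eqref{eq:ge2/3} to the single elementary claim $c \ge 2ab$, and then to prove that claim by a one-line manipulation of the Markov equation, thereby avoiding any appeal to the quadratic formula or to which of the two roots of $t^2-3ab\,t+(a^2+b^2)$ the number $c$ is.

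First I would rewrite $a^2+b^2+c^2 = 3abc$ as $c^2 = 3abc - a^2 - b^2$ and subtract $2abc$ from both sides to obtain the identity
\be\label{eq:key-identity}
c\,(c - 2ab) \;=\; c^2 - 2abc \;=\; abc - a^2 - b^2 .
\ee
Since $c>0$, the inequality $c \ge 2ab$ is equivalent to $c(c-2ab)\ge 0$, which by \eqref{eq:key-identity} is equivalent to $abc \ge a^2 + b^2$. So the whole proposition comes down to proving $abc \ge a^2+b^2$ for a Markov triple with $c>b\ge a$.

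For this I would use only that $a,b,c$ are positive integers with $c > b \ge a \ge 1$; in particular $c \ge b+1$. Then
\be
abc - a^2 - b^2 \;\ge\; ab(b+1) - a^2 - b^2 \;=\; b^2(a-1) + a(b-a)\;\ge\;0 ,
\ee
the last step because $a\ge 1$ and $b\ge a$. Hence $c(c-2ab)\ge 0$, so $c\ge 2ab$, and multiplying by $c>0$ gives $c^2 \ge 2abc$. Substituting $abc = \tfrac13(a^2+b^2+c^2)$ from the Markov equation yields $c^2 \ge \tfrac23(a^2+b^2+c^2)$, which is \eqref{eq:ge2/3}.

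I do not expect a genuine obstacle here; the one point that must not be glossed over is that the strict inequality $c>b$ is really used (to get $c\ge b+1$): with only $c\ge b$ one could take $a=b=c=1$, where $abc=1<2=a^2+b^2$ and the conclusion fails. Tracing back the chain, equality in \eqref{eq:ge2/3} forces $c=b+1$ and $b^2(a-1)=a(b-a)=0$, i.e.\ exactly the triple $(1,1,2)$. The only tempting alternative — expressing $c$ via the quadratic formula — would first force one to check that $c$ is the larger root, i.e.\ that $c^2\ge a^2+b^2$; the identity \eqref{eq:key-identity} is precisely what lets us skip that detour.
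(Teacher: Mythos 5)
Your proof is correct, but it takes a genuinely different route from the paper's. Both arguments begin with the same reduction, namely that \eqref{eq:ge2/3} is equivalent (via $a^2+b^2=(3ab-c)c$, or equivalently via $3abc=a^2+b^2+c^2$) to the bound $c \ge 2ab$. The paper then proves $2ab\le c$ by induction along the Markov tree: the base case $(1,1,2)$, plus the mutation lemmas (Lemma \ref{prp:a'} and Lemma \ref{prp:KaNo2}) showing that the two mutations $a\mapsto a'=3bc-a$, $b\mapsto b'=3ac-b$ which increase the largest entry satisfy $a'\ge 2bc$ and $b'\ge 2ac$. You instead dispose of $c\ge 2ab$ in two lines with no induction: the Markov equation gives the identity $c(c-2ab)=abc-a^2-b^2$, and $abc\ge a^2+b^2$ is an elementary inequality valid for \emph{any} integers $c>b\ge a\ge 1$ (using only $c\ge b+1$, via $ab(b+1)-a^2-b^2=b^2(a-1)+a(b-a)\ge 0$), with your observation that strictness of $c>b$ is genuinely needed being correct ($(1,1,1)$ fails) and your equality analysis correctly isolating $(1,1,2)$, consistent with the paper's base case where $2ab=c$. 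What each approach buys: yours is shorter and more self-contained, showing the estimate needs only the Markov equation together with the integer ordering; the paper's induction is tied to the mutation structure of the Markov tree, which is the organizing principle behind the whole family $T_{a,b,c}$, reuses lemmas already stated for that purpose, and records along the way the slightly stronger information that the mutated triples again satisfy the doubled-product bound.
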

\begin{proof} We first transform \eqref{eq:ge2/3} as follows:

    \[ \frac{c^2}{a^2 + b^2 + c^2} \ge \frac{2}{3}  \iff c^2 \ge 2(a^2 + b^2)
    \iff c^2 \ge 2(3ab - c)c  \iff  \]

\[ \iff 6ab - 2c \le c \iff 2ab \le c. \]

We will prove, by induction on the biggest
Markov number, that if $c \ge 2$ then  $2ab \le c$. This holds for our base $(a,b,c)=(1,1,2)$.

Suppose that $2ab \le c$ for $a \le b < c$ and consider mutations that increase the
biggest Markov number in the triple.
We derive from Propositions \ref{prp:a'} and \ref{prp:KaNo2} that the mutations that
increase the biggest Markov number in the triple are $a \leftrightarrow a' = 3bc
-a$ and $b \leftrightarrow b' = 3ac - b$ being $a'$ the biggest number in the
triple $(b,c,a')$ and $b'$ the biggest number in the triple $(a,c,b')$.

So we need to show that
\[ 2bc \le a' \ \ \ \  2ac \le b'  \]
But we already saw that in the proof of Proposition \ref{prp:a'}:

 \[ c \ge a \imp bc \ge a \iff a' = 3bc - a \ge 2bc \]

  \[ c \ge b \imp ac \ge b \iff b' = 3ac - b \ge 2ac \]
This finishes the proof.
 \end{proof}

 The inequality \eqref{eq:ge2/3} means that, if the affine lengths of the edges are
 $a^2$, $b^2$, $c^2$, then the longest edge $c^2$ has at least
 $\sfrac{2}{3}$ of the sum of the affine lengths $a^2 + b^2 + c^2$.

The sum of lengths of the edges is $9$ times the size of the base of the
monotone triangle. This means that the longest edge has at least $9\frac{2}{3} = 6$
times the length of the base of the monotone triangle. Hence we can
see at least 5 monotone balls in the complement of $T_{a,b,c}$, for $c\ge 2$,
see Figure \ref{fig:5Ball_abc_ATF}.

 Since we have already showed in Figure \ref{fig: 5Cliff_Ball_CP2} that we can find
 5 monotone balls in the complement of the Clifford torus
 $T(1,1,1)$, we derive that we can find
 5 monotone balls in the complement of the union $T_{a,b,c}$ for all $(a,b,c) \in \frak M$.

 \begin{figure}[h!]

\begin{center}

\centerline{\includegraphics[scale=0.65]{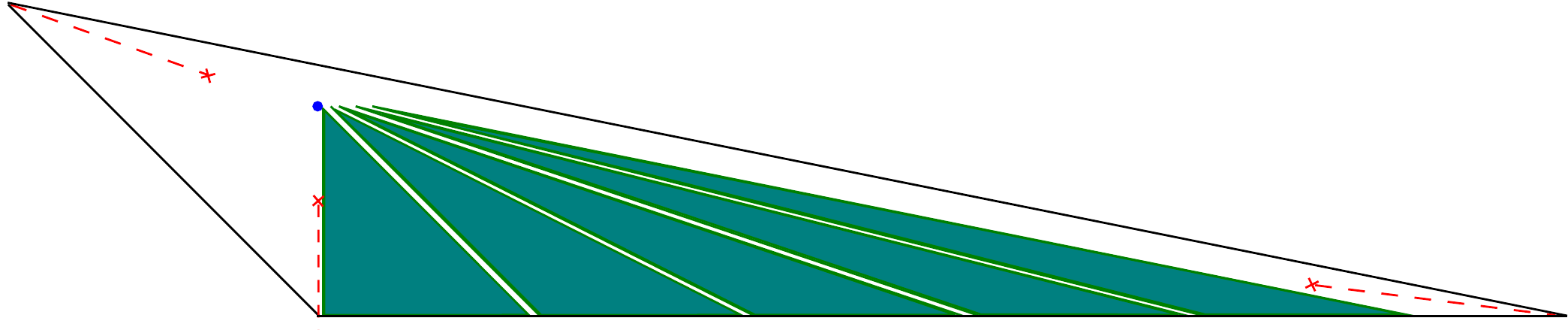}}

\caption{5 monotone balls in the complement of the monotone $T_{a,b,c}$ fibre in an ATF of $\CP^2$,
for $c\ge 2$.}
\label{fig:5Ball_abc_ATF}

\end{center}
\end{figure}

We summarize the above discussion into the following
\begin{thm}\label{thm:5balls} Any $T_{a,b,c}$ tori, in particular the Chekanov torus, can be embedded into
the monotone $\CP^2 \# \overline{k\CP^2}$ for $k \leq 5$.
\end{thm}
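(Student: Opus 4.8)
The plan is to produce, for every Markov triple $(a,b,c)$, an almost toric fibration (ATF) of $\CP^2$ whose barycentric fibre is $T_{a,b,c}$, and to exhibit inside its base diagram five disjoint (up to arbitrarily small spacing) \emph{monotone triangles} --- triangles with one edge on the boundary of the base, all affine edge lengths equal to $1$. By the discussion in Subsection \ref{subsec:almost-toric-blowup}, a small neighbourhood of each monotone triangle contains a symplectic ball of capacity equal to that of the monotone ball, and performing the almost toric blowup (equivalently, blowing down the exceptional sphere that sits over the dashed segment of the triangle) along these five triangles realises $\CP^2$ as the rational blowup of a monotone $\CP^2 \# \overline{k\CP^2}$ at $5-k$ points, for each $k \le 5$; thus $T_{a,b,c}$, which is untouched by the blowup since it lies in the complement of the triangles, embeds into all of these monotone $\CP^2 \# \overline{k\CP^2}$ with $k \le 5$. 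Since by Lemma \ref{prp:KaNo2} and Proposition \ref{prp:2/3} the constructions for different triples all agree on the complement of a small neighbourhood of the monotone fibre and the two relevant nodes, and since the Chekanov torus is $T_{1,1,2}$, the same five balls handle it too.

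The construction splits into two cases. \textbf{Case $c \ge 2$ (equivalently $(a,b,c)\neq(1,1,1)$).} Normalise so the area of a line is $3$ and each edge of the ATF base has affine length equal to the corresponding Markov square, so the total boundary length is $9$ and the monotone triangle has base length $1$. Put the longest edge (length $c^2$) horizontal via $SL(2,\Z)$, with the cut at one of its endpoints vertical, so $T_{a,b,c}$ sits at height $1$ above it. By Proposition \ref{prp:2/3} we have $c^2 \ge \frac{2}{3}(a^2+b^2+c^2)$, i.e. $c^2 \ge \frac{2}{3}\cdot 9 = 6$; hence the horizontal edge has room for at least $5$ consecutive monotone triangles, each with its apex at height $1$ but displaced horizontally so as to miss the single point $T_{a,b,c}$ --- this is exactly Figure \ref{fig:5Ball_abc_ATF}. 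Spacing them out by an arbitrarily small amount makes them pairwise disjoint, and each lies in the complement of $T_{a,b,c}$. \textbf{Case $(a,b,c)=(1,1,1)$ (so also the Chekanov torus $T_{1,1,2}$).} Here the naive triangle count in the standard toric diagram only gives $3$ monotone balls, so instead use the trick of Figures \ref{fig:ExampBall_CP2}--\ref{fig: 5Ball_Config}: take an ATF of $\CP^2$ with two nodes slid very close to the monotone Clifford fibre, start from a configuration of five consecutive monotone-size triangles, slide them through the two cuts (the monodromy distorts edges transverse to the eigendirection but one edge, being in the eigendirection, is undistorted), and then inflate each to an honest monotone triangle, yielding Figure \ref{fig: 5Cliff_Ball_CP2}. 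Because all tori $T_{1,b,c}$ are obtained by mutating inside the tiny region around the monotone fibre and the two nodes, these same five balls lie in the complement of all of them, in particular of both the Clifford and the Chekanov torus.

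Having the five disjoint monotone triangles in hand for each $(a,b,c)$, I finish by invoking the inverse of the almost toric blowup. Blowing up $\CP^2 \# \overline{k\CP^2}$ at $5-k$ suitably placed monotone points produces a symplectic manifold containing five disjoint exceptional spheres over the five dashed segments; blowing all five down recovers $\CP^2$ with its Fubini--Study form, and the monotone fibre $T_{a,b,c}$ survives this surgery untouched because it is disjoint from all the triangles. Reading the surgery backwards, $T_{a,b,c}$ therefore sits Lagrangian-embedded in the monotone $\CP^2 \# \overline{k\CP^2}$ for every $k \le 5$; taking $k=3$ answers the Chekanov--Schlenk question from \cite[Section 7]{ChSch10} affirmatively for the Chekanov torus.

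I expect the main obstacle to be the bookkeeping of the ``slide through the cut'' step for the $(1,1,1)$ (Clifford/Chekanov) case: one must verify that after sliding the five triangles across the two cuts their images, once inflated to monotone size, remain embedded, pairwise disjoint, and disjoint from the monotone fibre, while tracking how $A_{(k,l)}$ deforms the non-eigendirection edges --- the eigendirection edge is automatically undistorted, which is the key point. For the generic case $c \ge 2$, by contrast, the only real content is the arithmetic inequality $c^2 \ge 6$, which is precisely Proposition \ref{prp:2/3} combined with the normalisation; the rest is the by-now-routine picture in Figure \ref{fig:5Ball_abc_ATF}. The remark after Figure \ref{fig: 5Ball_Config} already explains why this technique cannot be pushed to get a sixth monotone ball simultaneously in the complement of all $T_{a,b,c}$ --- a triangle of near-monotone size forced through all three cuts spirals and cannot reach the tiny mutation neighbourhood --- so $k \le 5$ is the natural stopping point of the argument, consistent with the volume bound.
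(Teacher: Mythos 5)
Your proposal is correct and takes essentially the same route as the paper: Proposition \ref{prp:2/3} (longest edge of affine length at least $6$ when $c\ge 2$) gives the five monotone triangles of Figure \ref{fig:5Ball_abc_ATF}, the slide-through-the-cuts trick of Figures \ref{fig:ExampBall_CP2}--\ref{fig: 5Ball_Config} produces five monotone balls in the complement of all $T_{1,b,c}$ (hence of the Clifford and Chekanov tori), and blowing up $k\le 5$ of these monotone balls embeds $T_{a,b,c}$ into the monotone $\CP^2 \# \overline{k\CP^2}$. The only blemish is the sentence describing $\CP^2$ as a ``rational blowup of $\CP^2\#\overline{k\CP^2}$ at $5-k$ points'', which reverses the direction of the surgery; your final paragraph states the blowup/blowdown relation correctly, so this is a wording slip rather than a gap.
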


This affirmatively answers to a question asked by Chekanov and Schlenk \cite[Section 7]{ChSch10}.
(See Theorem \ref{thm:Chekanov-Schlenk} and the discussion around it.)

\section{In the complement of an elliptic curve}
\label{sec:CP2-E}

By \cite[Proposition~8.2]{Sy03}, we know that the preimage E of the
edges of a almost toric fibration diagram, with no rank $0$ singularities
(i.e., all nodes pushed inside) is a smooth symplectic torus representing
the anticanonical divisor. By a result of Sikorav \cite[Theorem~3]{Si03},
see also \cite{ST05}, we can assume that this boundary is indeed
an elliptic curve.

In this section we improve the estimate \eqref{eq:prelim} further combining the
results from the previous section.

 From Proposition \ref{prp:2/3}, we see that if $c \ge 2$ we can embed,
 in the complement of $T_{a,b,c}$ in an ATF of $\CP^2 \setminus E$,
 triangles as close as we need to the triangle of height equal to the height of
 the monotone triangle and base equal 6 times the base of the monotone triangle
 -- see Figure \ref{fig:Diam6_CP2abc}.

   \begin{figure}[h!]

\begin{center}

\centerline{\includegraphics[scale=0.5]{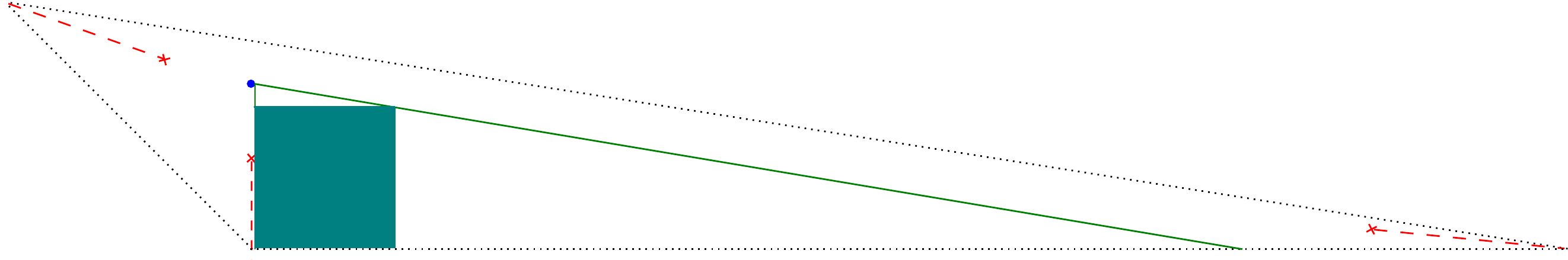}}

\caption{For $c\ge 2$ we can get balls of capacities converging to $\frac{4\pi}{7}$ in the complement
of any $T_{a,b,c}$ monotone fibre in an ATF of $\CP^2 \setminus E$ projecting
into diamonds converging to $\Diamond(\sfrac{2}{7})$ (shaded).
}
\label{fig:Diam6_CP2abc}

\end{center}
\end{figure}

 Inscribed inside one of these triangles we can embed a square of sides with
 length as close to $6/7$ the height of the monotone triangle as we want, i.e.,
 a diamond $\Diamond(\frac{6}{7}\frac{1}{3}) = \Diamond(\frac{2}{7})$ -- see
 Figure \ref{fig:Diam6_CP2abc}. Hence we can embed a ball of capacity as close
 to $\frac{4\pi}{7}$ as we want.

 \begin{figure}[h!]

\begin{center}

\centerline{\includegraphics[scale=0.7]{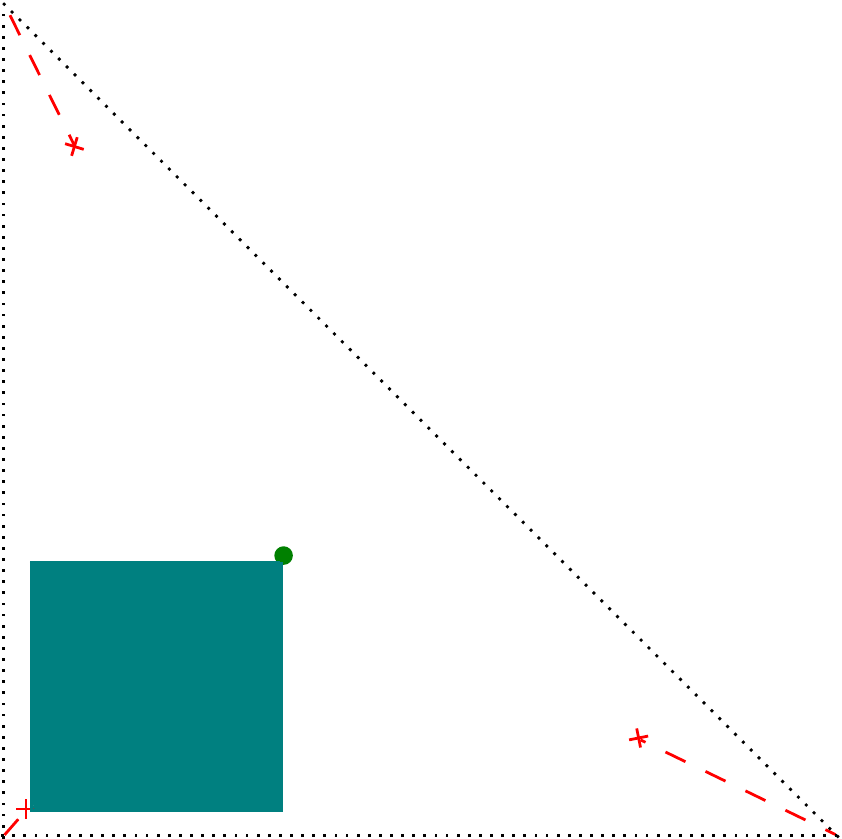}}

\caption{Balls of capacities converging to $\frac{2\pi}{3}$ in the complement
of the monotone Clifford torus fibre in an ATF of $\CP^2 \setminus E$.
}
\label{fig:DiamCliff_ATF_CP2}

\end{center}
\end{figure}

Moreover, in the complement of the Clifford torus $T_{\text{\rm Cl}} \cong T_{1,1,1}$ in $\CP^2 \setminus
E$ we can get diamonds as close to $\Diamond(\frac{1}{3})$ as we want,
hence with capacities converging to $\frac{2\pi}{3}$ -- see Figure \ref{fig:DiamCliff_ATF_CP2}.

We summarize the above discussion into the following

\begin{thm}\label{thm:final}

 \[ \inf_{(a,b,c) \in \frak M} c_G(\CP^2 \setminus E; T_{a,b,c}) \ge \frac{4\pi}{7} > \frac{\pi}{3}  \]

\end{thm}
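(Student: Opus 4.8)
The plan is to combine the two inequalities that were already essentially established in the preceding discussion, namely the bound coming from Proposition~\ref{prp:2/3} in the range $c \geq 2$ and the separate bound for the Clifford torus $T_{1,1,1}$, and then to observe that $(1,1,1)$ is the only Markov triple with $c < 2$. First I would recall that for any $(a,b,c) \in \mathfrak M$ with $c = \max\{a,b,c\}$ we have either $c \geq 2$ or $(a,b,c) = (1,1,1)$, since $a^2+b^2+c^2 = 3abc$ forces all entries to be $1$ once the largest is $1$. So the infimum over $\mathfrak M$ splits into the case $(a,b,c) = (1,1,1)$, handled separately, and the case $c \geq 2$, handled uniformly.

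For the case $c \geq 2$, I would invoke Proposition~\ref{prp:2/3}, which gives $c^2/(a^2+b^2+c^2) \geq 2/3$; after the normalization of Section~\ref{sec:balls} (area of the line $= 3$, hence total boundary length of any ATF base diagram $= 9$, monotone triangle of base $1$), this means the longest edge $c^2$ has affine length $\geq 6$. As explained around Figure~\ref{fig:Diam6_CP2abc}, using the almost toric fibration of $\CP^2$ with nodes slid so that the monotone fibre $T_{a,b,c}$ sits at height $1$ over the longest (horizontal) edge, we can inscribe in the complement of $T_{a,b,c}$ (and in the complement of the boundary divisor $E$) a triangle with height equal to that of the monotone triangle and base $6$ times that of the monotone triangle; inside such a triangle sits a square, i.e.\ a diamond $\Diamond(\tfrac{6}{7}\cdot\tfrac13) = \Diamond(\tfrac{2}{7})$, and Proposition~\ref{prop:DiamondAlmostToric} then produces, for every $\varepsilon > 0$, a symplectic ball of capacity $2\pi(\tfrac27 - \varepsilon)$ embedded in $\CP^2 \setminus E$ away from $T_{a,b,c}$. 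Taking $\varepsilon \to 0$ gives $c_G(\CP^2 \setminus E; T_{a,b,c}) \geq \tfrac{4\pi}{7}$ for every $(a,b,c)$ with $c \geq 2$. For the remaining triple $(1,1,1)$, I would use the computation illustrated in Figure~\ref{fig:DiamCliff_ATF_CP2}: in an ATF of $\CP^2 \setminus E$ the complement of the Clifford torus contains diamonds converging to $\Diamond(\tfrac13)$, so $c_G(\CP^2 \setminus E; T_{1,1,1}) \geq \tfrac{2\pi}{3} > \tfrac{4\pi}{7}$.

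Putting these together, $c_G(\CP^2 \setminus E; T_{a,b,c}) \geq \tfrac{4\pi}{7}$ for all $(a,b,c) \in \mathfrak M$, hence $\inf_{(a,b,c)} c_G(\CP^2\setminus E; T_{a,b,c}) \geq \tfrac{4\pi}{7}$, and since $\tfrac{4\pi}{7} > \tfrac{\pi}{3}$ this also sharpens the preliminary estimate \eqref{eq:prelim}. The main obstacle — and the only place requiring genuine care rather than bookkeeping — is verifying that the combinatorial picture behind Figure~\ref{fig:Diam6_CP2abc} really does fit inside the base diagram: one must check that after an $SL(2,\Z)$ transformation putting the longest edge horizontal with one cut vertical, there is enough room (using that the longest edge has affine length $\geq 6$ and the monotone fibre is at height $1$) to inscribe the triangle of base $6$ without colliding with the other two cuts or with $T_{a,b,c}$ itself, and that the diamond $\Diamond(\tfrac27)$ inscribed in it genuinely avoids the nodes as required by Proposition~\ref{prop:DiamondAlmostToric}; this is where sliding the nodes appropriately along their eigenrays (as in the Remark following Proposition~\ref{prop:DiamondAlmostToric}) is used.
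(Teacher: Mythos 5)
Your proposal is correct and follows essentially the same route as the paper: the case $c\ge 2$ is handled via Proposition \ref{prp:2/3} (longest edge of affine length at least $6$), the base-$6$ triangle of Figure \ref{fig:Diam6_CP2abc}, the inscribed diamond $\Diamond(\sfrac{2}{7})$ and Proposition \ref{prop:DiamondAlmostToric} giving capacities converging to $\frac{4\pi}{7}$, while the Clifford torus $T_{1,1,1}$ is treated separately with diamonds converging to $\Diamond(\sfrac{1}{3})$ as in Figure \ref{fig:DiamCliff_ATF_CP2}. Your only addition is to state explicitly that $(1,1,1)$ is the sole Markov triple with largest entry less than $2$, a point the paper leaves implicit.
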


\section{Discussion and open questions}\label{sec:discussion}

In this section, we would like to propose a few interesting open questions
in relation to the geometry of $T_{a,b,c}$ in addition to Problem \ref{prb:Tabc}.

\subsection{Hamiltonian-minimal representative}

As mentioned in the introduction, the starting point of our research in the present article
lies in our attempt to visualize the $T_{a,b,c}$ tori in terms of the geometry of Fubini-Study metric
on $\CP^2$. One interesting question is to find a geometric realization of the tori $T_{a,b,c}$ with
special Riemannian geometric properties in the spirit of \cite{Oh90,Oh93B,Oh94}.

\begin{ques} \begin{enumerate}
\item Construct a \emph{Hamiltonian-minimal} representative in each Hamiltonian
isotopy class of $T_{a,b,c}$ with respect to the Fubini-Study metric and visualize the
representative.
\item More boldly prove that there exists a Hamiltonian-minimal representative of $T_{a,b,c}$
in its Hamiltonian isotopy class, or that there is a lower bound of the volume inside
the Hamiltonian isotopy class.
\item Is there any alternative group theoretic construction of $T_{a,b,c}$?
\end{enumerate}
\end{ques}

According to \cite{Oh94}, the mean curvature flow of Lagrangian submanifolds in
Einstein-K\"ahler manifolds such as in $\CP^2$ equipped with Fubini-Study metric $g$ preserves
the Lagrangian property and decreases the volume.

On the other hand, it follows from \cite{Oh93B} and
the index calculation given by Urbano \cite{Ur93} that
any \emph{Hamiltonian-stable} Hamiltonian-minimal Lagrangian torus with respect to the
Fubini-Study metric is isometric to the Clifford torus in $\CP^2$, \emph{provided it is
smooth}. Therefore none of smooth representative $T_{a,b,c}$ are Hamiltonian-stable unless
$(a,b,c) = (1,1,1)$.
This in particular implies that there is no \emph{smooth} volume minimizing representative of $T_{a,b,c}$
in its Hamiltonian isotopy class unless $(a,b,c) = (1,1,1)$. It was proven in \cite{Oh93B} that
the Clifford torus is volume minimizing under a sufficiently small Hamiltonian isotopy, and in
particular it is Hamiltonian-stable. These observations reveal that
the second question may be a quite hard but interesting question to ask.
Existence of a positive lower bound
of the volume inside each given Hamiltonian isotopy class can be proved via the
Crofton's formula (see \cite[Introduction]{Oh90}),
if the following question is affirmative

\begin{ques} Consider the set $\{g \cdot \R P^2\mid g \in \text{\rm Iso}(\CP^2)\}$, i.e.,
the set of totally geodesic $\R P^2$. Is is true that $T_{a,b,c} \cap (g \cdot \R P^2) \neq \emptyset$
for all $g \in \text{\rm Iso}(\CP^2)$. Here $\text{\rm Iso}(\CP^2)$ denotes the isometry group
of the Fubini-Study metric.
\end{ques}

Since $\text{\rm Iso}(\CP^2) \subset \text{\rm Ham}(\CP^2)$, the above mentioned non-intersection
result follows from the Floer theoretic question whether or not $T_{a,b,c} \cap \phi(\R P^2) \neq \emptyset$
for all $\phi \in \text{\rm Ham}(\CP^2)$. It turns out that this intersection result
depends on the types of Markov triples $(a,b,c)$. For example, Alston-Amorim \cite{AA12}
proved that the Clifford torus $T_{\text{\rm Cl}} \cong T_{1,1,1}$ intersects $\phi(\R P^2)$ for all $\phi$:
They proved that a version of Floer cohomology between the product $T_{\text{\rm Cl}}
\times T_{\text{\rm Cl}}$ and $\R P^2 \times \R P^2$ in $\CP^2 \times \CP^2$ is defined
and is non-zero \emph{even though the Floer cohomology between $\R P^2$ and $T_{\text{\rm Cl}}$
is not defined.}

On the other hand, the case $(a^2,b^2,c^2) = (1,1,4)$
i.e., $T_{1,1,2}$ was studied by Wei-Wei Wu \cite{Wu15} for which the fiber at the singular
vertex of the moment polytope is symplectomorphic to $\R P^2$ and so it does not intersect
the semi-toric fiber $T(1,1,4) \cong T_{1,1,2}$. (It is also shown in \cite{OU13} that
$T_{1,1,2}$ is the Chekanov torus.) In fact, such a non-intersection result
can be proved for any triple $(a,b,c)$ one of whose element is $2$. This can be seen
by mutating the smooth vertices in Wu's semi-toric picture \cite{Wu15}, for instance.

These observations lead us to proposing the
following conjecture which is an interesting subject of future investigation.

\begin{conj}\label{conj:RP2Tabc} There exists a Hamiltonian diffeomorphism $\phi$ on
$\CP^2$ such that $T_{a,b,c} \cap \phi(\R P^2) = \emptyset$ if and only if
$(a,b,c) = (2,b,c)$.
\end{conj}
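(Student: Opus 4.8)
The conjecture has an ``if'' direction that is essentially symplectic-topological and an ``only if'' direction that is Floer-theoretic; I expect only the latter to be genuinely hard.

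\emph{The ``if'' direction.} Suppose $2\in\{a,b,c\}$, say $a=2$. In the almost toric fibration $\CP^2_{a,b,c}$ recalled in Section 2, the rational homology ball glued in along the vertex opposite the edge $a^2 u_1$ is $B_{2,l_1}$, whose boundary lens space is $L(4,2l_1-1)=L(4,1)$ since $l_1$ is odd. This $B_{2,l_1}$ is diffeomorphic to the twisted disk bundle $DT^{\ast}\R P^2$, whose Weinstein skeleton is an embedded Lagrangian $\R P^2\subset\CP^2_{a,b,c}\cong\CP^2$; as it lies over a neighbourhood of that vertex while $T(a^2,b^2,c^2)=T_{a,b,c}$ lies over the labelled barycenter (Lemma \ref{lem:vianna-tori-location}), the two are disjoint by construction. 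The plan is then to promote this Lagrangian $\R P^2$ to the standard one: the base case $(1,1,2)$ is Wu's semi-toric picture \cite{Wu15}, and any $(2,b,c)$ is reached from it by nodal trades and slides performed only at the two \emph{smooth} vertices, which leave the $B_{2,l_1}$-chamber and its core untouched; then, using that $H^1(\R P^2;\R)=0$ (so a Lagrangian isotopy of $\R P^2$ extends to an ambient symplectic isotopy from the identity) and $H^1(\CP^2;\R)=0$ (so $\Symp_0(\CP^2)=\Ham(\CP^2)$), together with the uniqueness up to Hamiltonian isotopy of Lagrangian $\R P^2\subset\CP^2$, one concludes that it equals $\phi(\R P^2_{\mathrm{std}})$ for some $\phi\in\Ham(\CP^2)$, so $T_{a,b,c}\cap\phi(\R P^2_{\mathrm{std}})=\emptyset$.

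\emph{The ``only if'' direction.} Suppose $2\notin\{a,b,c\}$; one must show that $T_{a,b,c}$ cannot be disjoined from $\R P^2$ by any Hamiltonian diffeomorphism --- a relative non-displaceability statement extending the Alston--Amorim theorem \cite{AA12} for $T_{1,1,1}$. The plan is to run their doubling argument over $\mathbb{F}_2$. Since $\R P^2\subset\CP^2$ has minimal Maslov number $3$, the split Lagrangian $\R P^2\times\R P^2\subset(\CP^2\times\CP^2,\,\omega_{\mathrm{FS}}\oplus\omega_{\mathrm{FS}})$ bounds no Maslov-$2$ disks, so $HF(\R P^2\times\R P^2,\R P^2\times\R P^2;\mathbb{F}_2)\cong H^{\ast}(\R P^2\times\R P^2;\mathbb{F}_2)\neq 0$; and $\R P^2$ and every $T_{a,b,c}$ are monotone with the same monotonicity constant, so $T_{a,b,c}\times T_{a,b,c}$ and $\R P^2\times\R P^2$ are monotone with matching constants. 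Passing to a local system $\rho$ on $T_{a,b,c}$ that is a critical point of the disc superpotential $W_{a,b,c}$ (whose Laurent-polynomial form and critical points were computed by Vianna \cite{Vi14}), the Maslov-$2$ obstruction of the pair $\big((T_{a,b,c},\rho)^{\times 2},(\R P^2)^{\times 2}\big)$ is $2\,W_{a,b,c}(\rho)=0$ in characteristic $2$, so $HF\big((T_{a,b,c},\rho)^{\times 2},(\R P^2)^{\times 2};\mathbb{F}_2\big)$ is defined; if $T_{a,b,c}$ could be disjoined from $\R P^2$, this group would vanish, so it suffices to prove it nonzero. With a split almost complex structure the underlying complex is $CF\big((T_{a,b,c},\rho),\R P^2;\mathbb{F}_2\big)^{\otimes 2}$ with differential $\partial\otimes 1+1\otimes\partial$, and its cohomology is to be read off from the module structure of the one-factor complex over $\mathbb{F}_2[\epsilon]/\big(\epsilon^2-W_{a,b,c}(\rho)\big)$.

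\emph{Main obstacle.} The crux is exactly this last non-vanishing, uniformly over all non-$2$ triples. Unlike the toric Clifford case there is no explicit normal form placing $T_{a,b,c}$ and a Lagrangian $\R P^2$ in general position, so the complex $CF(T_{a,b,c},\R P^2;\mathbb{F}_2)$ and its differential are not directly available; and the superpotential alone is insufficient, since for a complex whose square $\partial^2$ is a nonzero scalar the doubled complex can still have vanishing cohomology (for instance when the nilpotent part of $\partial$ has only $2$-dimensional Jordan blocks). One therefore needs genuine Fukaya-categorical input linking $T_{a,b,c}$, $\R P^2$ and the anticanonical divisor --- say a twisted K\"unneth formula, or the identification of $\CP^2\setminus\R P^2$ with the interior of the disk bundle $D\mathcal O_{\CP^1}(4)$, which would recast the conjecture as: $T_{a,b,c}$ embeds Hamiltonian-isotopically into $D\mathcal O_{\CP^1}(4)$ if and only if $2\in\{a,b,c\}$, thereby reducing it to classifying the monotone Lagrangian tori of this disk bundle and matching their mutation tree with the ``contains $2$'' branch of the Markov tree. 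Such a classification is itself of open-problem difficulty, which is why the statement is posed only as a conjecture; a secondary obstacle, needed already for the ``if'' direction, is the (expected) uniqueness up to Hamiltonian isotopy of Lagrangian $\R P^2\subset\CP^2$.
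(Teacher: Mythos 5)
There is nothing in the paper for your argument to be compared against: the statement you were given is posed as a \emph{conjecture} (Conjecture \ref{conj:RP2Tabc}), and the paper offers no proof of either direction. What the paper does provide is exactly the motivating evidence you reproduce: for the ``only if'' side, the Alston--Amorim theorem \cite{AA12} that $T_{1,1,1}\times T_{1,1,1}$ has nonvanishing Floer cohomology against $\R P^2\times\R P^2$ (even though $HF(\R P^2,T_{\mathrm{Cl}})$ itself is undefined), and for the ``if'' side, Wu's semi-toric picture \cite{Wu15} in which the fiber over the singular vertex of the $(1,1,4)$ polytope is a Lagrangian $\R P^2$ disjoint from $T_{1,1,2}$, together with the one-sentence remark that mutating the smooth vertices extends this to every triple containing $2$. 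Your ``if''-direction sketch is essentially the same route the paper hints at, and your ``only if'' strategy is the natural extension of the doubling argument the paper cites; you correctly identify that neither is a complete proof, which is consistent with the statement being left as a conjecture.

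Two soft spots in your sketch are worth recording explicitly. First, even for the ``if'' direction, disjointness of $T_{a,b,c}$ from \emph{some} Lagrangian $\R P^2$ arising as the core of the $B_{2,l_1}$ chamber must be upgraded to disjointness from $\phi(\R P^2_{\mathrm{std}})$ for a Hamiltonian $\phi$; this requires the uniqueness up to Hamiltonian isotopy of Lagrangian $\R P^2\subset\CP^2$, which you flag as an assumption rather than supply --- the paper's informal discussion silently elides the same point. Second, as you yourself note, in the ``only if'' direction the vanishing of the doubled obstruction over $\mathbb{F}_2$ only makes the Floer group of the product pair \emph{defined}; its nonvanishing for all non-$2$ triples is the genuine open problem, and no amount of superpotential data alone settles it. So your proposal is an honest and reasonable strategy outline, matching the paper's intent, but it is not a proof, and the paper contains none.
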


Finally, under the assumption that the answer to the second question above
is affirmative, the following question is interesting to ask.

\begin{ques}\label{ques:asymp-volume} Is there a family $\{T_{a,b,c}\}$ such that
there exists a positive constant $\epsilon_\infty > 0$
$$
\inf_{a,b,c \in \frak M} \frac{\vol_g(T_{a,b,c};\CP^2)}{(abc)^{2/3}} \geq \epsilon_\infty?
$$
\end{ques}

\subsection{Size of Weinstein neighborhood of $T_{a,b,c}$}

Another question is related to the size of the maximal Darboux-Weinstein
neighborhood of $T_{a,b,c}$. We start with some general discussion on Darboux-Weinstein chart.
Let $L \subset M$ be a compact
Lagrangian submanifold. Consider the Darboux-Weinstein chart
$
\Phi: \CU \to \CV
$
where $\CU$ is a neighborhood of $L$ in $M$ and $\CV$ is a
neighborhood of the zero section $o_L \subset T^*L$. Then by definition,
we have
$$
\omega = \Phi^*\omega_0, \quad \omega_0 = - d\theta
$$
for the Liouville one-form $\theta$ on $T^*L$ and $\Phi|_L = id_L$
under the identification of $L$ with $o_L$.

Fix any Riemannian metric $g$ on $L$. For $x \in \CU$, we define
$$
\|x\|_{g,\Phi} = \|\Phi(x)\|_{g(\pi(\Phi(x))}
$$
where $\Phi(x) \in T_{\pi(\Phi(x))}^* L$ and $\pi: T^*L \to L$
is the canonical projection, and $\|\cdot \|_{g(q)}$ is the
norm on $T_q^*L$ induced by the inner product $g(q)$.

\begin{defn}\label{defn:width} Let $L \subset M$ be a compact
Lagrangian submanifold equipped with a metric $g$. Consider the Darboux-Weinstein chart
$
\Phi: \CU \to \CV.
$
Define
$$
{\frak w}_{\text{DW}}(\Phi;g): = \inf_{q \in L} \left(\sup_{x \in \pi^{-1}(q) \cap \CU} \|x\|_{g,\Phi}\right)
$$
and
\be\label{eq:DW-width}
{\frak w}_{\text{DW}}(L;M) = \sup_{\Phi} {\frak w}_{\text{DW}}(\Phi;g)
\ee
over all Darboux-Weinstein chart of $L$.
We call ${\frak w}_{\text{DW}}(L;M)$ the \emph{Weinstein width} of $L$
(relative to the metric $g$).
\end{defn}
The ${\frak w}_{\text{DW}}(L;M)$ is another symplectic invariant of $L$ which measures
extrinsic complexity of the embedding $L \subset M$.
Obviously ${\frak w}_{\text{DW}}(L;M) > 0$
since ${\frak w}_{\text{DW}}(\Phi;g) > 0$ for any Darboux-Weinstein chart $\Phi$
for compact Lagrangian submanifold $L$.

With this preparation, we propose the following conjecture which is another way of
examining the conjectural \emph{ergodic} behavior of the family $T_{a,b,c}$ .

\begin{conj}\label{conj:Weinstein-chart}
Consider $T_{a,b,c} \subset \CP^2$. Then
$$
\inf_{(a,b,c) \in \frak M}
{\frak w}_{\text{DW}}(T_{a,b,c};\CP^2) = 0.
$$
\end{conj}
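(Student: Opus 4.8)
Here is a plan of attack; that it does not obviously close is, I think, why the statement is left as a conjecture.

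The plan is to bound ${\frak w}_{\text{DW}}(T_{a,b,c};\CP^2)$ from above by a quantity measuring the \emph{local} extrinsic geometry of the embedding $T_{a,b,c}\subset\CP^2$, and then to show that this quantity tends to $0$ along $\frak M$. For the first step, note that a Darboux--Weinstein chart $\Phi:\CU\to\CV\subset T^*T_{a,b,c}$ with $\Phi|_{T_{a,b,c}}=\mathrm{id}$ organizes $\CU$ into a foliation by the Lagrangian discs $\CU_q':=\Phi^{-1}(T^*_qT_{a,b,c}\cap\CV)$, transverse to $T_{a,b,c}$ along the zero section; because $g=g_{\text{\rm FS}}|_{T_{a,b,c}}$ and $\omega_{\text{\rm FS}}$ is K\"ahler, the canonical first-order identification $N_qT_{a,b,c}\cong T^*_qT_{a,b,c}$ is a $g_{\text{\rm FS}}$-isometry, so ${\frak w}_{\text{DW}}(\Phi;g)$ agrees to first order with $\inf_{q\in T_{a,b,c}}$ of the $\CP^2$-reach of the disc $\CU_q'$. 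First I would try to show that this infimum is controlled by the normal injectivity radius $\mathrm{ninj}(T_{a,b,c})$ --- intuitively a Lagrangian disc $\CU_q'$ cannot spread far while remaining inside an embedded tubular neighbourhood which is pinched wherever $T_{a,b,c}$ nearly meets itself --- aiming at a universal bound
\[
{\frak w}_{\text{DW}}(T_{a,b,c};\CP^2)\ \le\ C_0\,\mathrm{ninj}(T_{a,b,c}).
\]

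Granting such a bound, it remains to prove that $\inf_{(a,b,c)\in\frak M}\mathrm{ninj}(T_{a,b,c})=0$, and here I would run a compactness argument. Suppose instead that $\mathrm{ninj}(T_{a,b,c})\ge r_0>0$ for all triples. Since $\mathrm{ninj}$ is at most the focal radius, which is $\le 1/\|\mathrm{II}\|$, the second fundamental forms are uniformly bounded, so by Arzel\`a--Ascoli the family $\{T_{a,b,c}\}$ is precompact in $C^{1,\alpha}$; any $C^1$-limit $L_\infty$ is again a monotone Lagrangian torus, and for $a+b+c$ large $T_{a,b,c}$ is a monotone Lagrangian torus $C^1$-close to $L_\infty$, hence --- realised in a Weinstein neighbourhood of $L_\infty$ as the graph of a small closed one-form --- it has the same three Maslov index $2$ disc areas as $L_\infty$, which by the computation of Proposition~\ref{prop:scaling} forces the periods of that one-form (the signed deviations of those areas from $\tfrac{2\pi}{3}$) to vanish; thus $T_{a,b,c}$ is Hamiltonian isotopic to $L_\infty$. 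This would make infinitely many of the $T_{a,b,c}$ mutually Hamiltonian isotopic, contradicting the theorem of \cite{Vi13,Vi14} that these tori are pairwise non-Hamiltonian-isotopic. Hence some $T_{a,b,c}$ must have arbitrarily small normal injectivity radius, and the conjecture follows from the first step.

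The hard part will be the first step, and the difficulty is structural: the definition of ${\frak w}_{\text{DW}}$ takes an infimum over $q$ of a \emph{supremum} over directions in $T^*_qT_{a,b,c}$, so a ``tilted'' Darboux--Weinstein chart can keep the disc $\CU_q'$ wide in some unobstructed fibre direction over \emph{every} $q$, even near a near-self-intersection of $T_{a,b,c}$; an honest inequality ${\frak w}_{\text{DW}}\lesssim\mathrm{ninj}$ would therefore have to exploit the global coherence of the Lagrangian disc foliation $\{\CU_q'\}$, and I do not see how to do this with current methods. A secondary issue is quantitative: the compactness argument only shows $\mathrm{ninj}(T_{a,b,c})$ is not bounded below, which suffices for $\inf=0$ but gives no rate, and any control on the rate would seem to require genuinely understanding the geometry of the $T_{a,b,c}$ --- note that even the weaker-looking assertion $\vol_{g_{\text{\rm FS}}}(T_{a,b,c})\to\infty$ is open (Question~\ref{ques:asymp-volume}). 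For these reasons the statement is posed as a conjecture rather than a theorem.
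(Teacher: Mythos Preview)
This statement is posed in the paper as an open conjecture in the discussion section, and no proof is offered there; the only comment following it is that the conjecture is ``essentially equivalent to proving the infimum over $(a,b,c) \in \frak M$ of the size of the shape invariant $Sh_{T_{a,b,c}}(\CP^2)$ is zero,'' with references to \cite{Si89} and \cite{ShToVi18}. So there is no argument in the paper against which to compare yours, and you are right to present this as a plan rather than a proof.

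That said, your outline has two structural gaps beyond the difficulty you already flag. First, the target inequality ${\frak w}_{\text{DW}}(T_{a,b,c};\CP^2)\le C_0\,\mathrm{ninj}(T_{a,b,c})$ with a universal $C_0$ is not merely hard --- in the paper's setup it is false. The paper declares ${\frak w}_{\text{DW}}(L;M)$ to be a symplectic invariant of $L$ (the metric $g$ in Definition~\ref{defn:width} is fixed on the abstract torus, not the induced metric $g_{\text{\rm FS}}|_{T_{a,b,c}}$ you take), whereas $\mathrm{ninj}$ can be driven to zero within any Hamiltonian isotopy class: Hamiltonian diffeomorphisms act transitively on points of $\CP^2$, so one can always isotope two distant patches of $L$ arbitrarily close together. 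Thus no inequality of the proposed form can hold for every representative, and your ``tilted chart'' worry is really a symptom of this invariance mismatch rather than a technical obstacle that further work would remove.

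Second, the compactness step is incomplete: a uniform bound on $\|\mathrm{II}\|$ alone does not yield $C^{1,\alpha}$-precompactness of embedded surfaces in a compact ambient manifold; one also needs a uniform area bound (this is the hypothesis in Langer-type compactness theorems for immersed surfaces). But the areas $\vol_{g_{\text{\rm FS}}}(T_{a,b,c})$ are not known to be bounded --- Question~\ref{ques:asymp-volume} in the same section explicitly asks whether they grow like $(abc)^{2/3}$ --- so the Arzel\`a--Ascoli step does not close as written. Without that bound a subsequence might converge only locally, to a non-compact Lagrangian, and the conclusion that nearby $T_{a,b,c}$ are Hamiltonian isotopic to a fixed compact torus $L_\infty$ would no longer follow.
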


Proving this conjecture is essentially equivalent to
proving the infimum over $(a,b,c) \in \frak M$ of the size of the shape invariant
$Sh_{T_{a,b,c}}(\CP^2)$ is zero. See \cite{Si89} for the definition of
the shape invariant and \cite[Section 6]{ShToVi18} for the relevant study of this
shape invariants.

\bibliographystyle{alphanum}
\bibliography{SympRefs}

\end{document}